\documentclass[11pt]{article}
\usepackage{amsmath}
\usepackage{amssymb}
\usepackage{latexsym}
\usepackage{amsthm}
\usepackage{eucal}
\usepackage{mathrsfs}
\usepackage{wasysym}
\usepackage{fancyhdr}
\usepackage{amsfonts}
\usepackage{amssymb}
\usepackage{epsfig}
\usepackage{epstopdf}
\usepackage[normalem]{ulem}

\usepackage{tkz-graph}
\usetikzlibrary{arrows}
\usepackage{tkz-euclide}
\usetkzobj{all}

\topmargin = -0.6 in \oddsidemargin = 0 in
\setlength{\textheight}{9in} \setlength{\textwidth}{6.45in}

\newtheorem{thm}{Theorem}[section]
\newtheorem{cor}[thm]{Corollary}
\newtheorem{lemma}[thm]{Lemma}




\begin{document}

\title{Cycle Traversability  for Claw-free Graphs and Polyhedral Maps}

\author{
Ervin Gy\H{o}ri\thanks{Alfr\'ed R\'enyi Institute of Mathematics, Hungarian Academy of Sciences, H-1053 Budapest, Re\'altanoda u. 13-15, Hungary. Email: {\tt gyori@renyi.hu}. Partially supported by the NKFIH Grant K116769.}, \, 
Michael D. Plummer\thanks{Department of Mathematics, Vanderbilt University, Nashville, TN 37240, USA. Email: {\tt michael.d.plummer@vanderbilt.edu}.}, \,
Dong Ye\thanks{ Department of Mathematical Sciences, Middle Tennessee State University, Murfreesboro, TN 37132, USA. Email: {\tt dong.ye@mtsu.edu}. Partially supported by a grant from the Simons Foundation (No. 359516).} \, and  
Xiaoya Zha\thanks{Department of Mathematical Sciences,
Middle Tennessee State University, Murfreesboro, TN 37132, USA. Email: {\tt xiaoya.zha@mtsu.edu}.}
}

\date{July 3 2018}

\maketitle

\begin{abstract}
Let $G$ be a graph, and $v\in V(G)$ and $S\subseteq V(G)\backslash v$ of size at least $k$. 
An important result on graph connectivity due to Perfect states that, if $v$ and $S$ are $k$-linked, then a $(k-1)$-link between a vertex $v$ and $S$  can be extended to a $k$-link between $v$ and $S$ such that the endvertices of the $(k-1)$-link are also the endvertices of the $k$-link. We begin by proving a generalization of Perfect's result by showing that, if two disjoint sets $S_1$ and $S_2$ are $k$-linked, then a $t$-link ($t< k$) between two disjoint sets $S_1$ and $S_2$ can be extended to a $k$-link between $S_1$ and $S_2$ such that the endvertices of the $t$-link are preserved in the $k$-link.

Next, we are able to use these results to show that a 3-connected claw-free graph  always has a cycle passing through any given five vertices but avoiding any other one specified vertex. 
We also show that this result is sharp by exhibiting an infinite family of 3-connected claw-free graphs
in which there is no cycle containing a certain set of six vertices but avoiding a seventh specified vertex. 
A direct corollary of our main result shows that, a 3-connected claw-free graph has a topological wheel minor $W_k$ with $k\le 5$ if and only if it has a vertex of degree at least $k$.  

Finally, we also show that a graph polyhedrally embedded in a surface always has a cycle passing through any given three vertices but avoiding any other specified vertex. The result is best possible in the sense that the polyhedral embedding assumption is necessary, and there are infinitely many graphs polyhedrally embedded in any surface having no cycle containing a certain set of four vertices but avoiding a fifth specified vertex.   

\medskip

\noindent{\bf Keywords:} {\em claw-free graph, cyclability, topological wheel minor, $k$-link, Perfect's Theorem, polyhedral map}

\end{abstract}

\section{Introduction}

A graph $G$ is Hamiltonian if $G$ has a cycle containing all the vertices of $G$.
Hamiltonicity of graphs is one of the major topics in graph theory.
High connectivity does not guarantee the existence of a Hamilton cycle in a graph,
but a highly connected graph does contain a long cycle. 
For example, given any $k$ vertices of a $k$-connected graph $G$, there is a cycle containing all $k$ of them.
(cf. \cite{D}).
Bondy and Lov\'asz \cite{BL} proved an even stronger result which says that for any given vertex set $S$ of size $k-1$ in a $k$-connected graph $G$, the cycles containing $S$ generate the cycle space of $G$. 
Besides the Hamiltonicity problem, Chv\'atal \cite{C} also considered the cyclability question for graphs, i.e., for a given set of vertices,
does there exist a cycle through these vertices.
Cyclability versus connectivity in graphs has been studied by a number of  authors (cf. \cite{K2,HM,FGLS}).


If one adds additional properties to the connectivity assumption, it is sometimes possible to guarantee higher cyclability. An example is the following nine-point theorem. 

\begin{thm}[Holton, Mckay, Plummer \& Thomassen, \cite{HMPT}]
Let $G$ be a 3-connected cubic graph. Then any set of nine vertices lies on a cycle. 
\end{thm}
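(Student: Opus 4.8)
\medskip\noindent\emph{Proof sketch.} The plan is to induct on $|V(G)|$ via reductions that preserve being $3$-connected and cubic. Let $S$ be the prescribed set of nine vertices. Since a cubic graph has evenly many vertices, the cases in which these nine exhaust $V(G)$ are $|V(G)|\in\{4,6,8\}$, and for those it is enough to recall that every $3$-connected cubic graph on at most eight vertices is Hamiltonian (the Petersen graph, on ten vertices, is the smallest $3$-connected cubic non-Hamiltonian graph). So assume $|V(G)|\ge 10$ and that the claim holds for all smaller graphs.

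First I would remove triangles. As $G$ is $3$-connected and cubic it is $3$-edge-connected, which forbids a ``diamond'' ($K_4$ minus an edge) as a subgraph, so the three vertices of any triangle $T$ have pairwise distinct neighbours off $T$; hence contracting $T$ to a single vertex $t$ produces a simple $3$-connected cubic graph $G/T$ of order $|V(G)|-2$. Replacing $S\cap T$ by $t$ gives a set of at most nine vertices; an inductive cycle through it uses two of the three edges at $t$, and expanding $t$ back to $T$ along a path of the triangle (a Hamilton path of $T$ between those two vertices when $|S\cap T|\ge 2$, through a single vertex otherwise) produces the required cycle in $G$. So we may assume $G$ is triangle-free, hence cyclically $4$-edge-connected unless it has a $3$-edge-cut $F$ separating it into two parts each on at least four vertices.

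In the latter situation I would complete each part to a $3$-connected cubic graph $G_i^{+}$ by joining a fresh apex $a_i$ to the three edges of $F$ on its side. A cycle meets $F$ an even number of times, so $0$ or $2$; as $S$ meets both parts, a cycle through $S$ crosses $F$ in exactly one pair of edges, and it therefore suffices to find, for some common pair $\{f,f'\}\subseteq F$, cycles of $G_1^{+}$ and $G_2^{+}$ through $S_i\cup\{a_i\}$ that leave $a_i$ along $f$ and $f'$; these glue to a cycle of $G$. Making the induction carry this extra requirement — that a cycle through the marked vertices can be steered through a prescribed pair of edges at a small cut — is exactly the link-extension phenomenon developed in this paper, and it is how the $3$-edge-cut case should close.

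The substantive case is $G$ triangle-free and cyclically $4$-edge-connected (this class already contains the Petersen graph). Here I would take a cycle $C$ through as many vertices of $S$ as possible; if some $v\in S$ is missed, Menger's theorem yields three internally disjoint $v$--$C$ paths, and cubicity — each vertex of $C$ has just one neighbour off $C$ — ought to permit rerouting $C$ so as to absorb $v$ while retaining every vertex of $S$, contradicting maximality. Keeping the rerouting valid when a marked vertex is endangered, and doing the accounting that forces the bound nine, is the hard part; in the original (computer-assisted) proof this is where a search over small cyclically $4$-edge-connected cubic graphs enters. Sharpness, finally, is immediate: the Petersen graph is hypohamiltonian, so every nine of its ten vertices lie on a cycle while all ten do not.
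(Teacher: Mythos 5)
The paper does not prove this theorem; it is cited as a known result of Holton, McKay, Plummer and Thomassen \cite{HMPT}, so there is no ``paper's own proof'' to compare against. Judged on its own terms, your sketch identifies the correct reduction scaffolding (contract triangles, split along cyclic $3$-edge-cuts, then attack the triangle-free, cyclically $4$-edge-connected core), and the triangle step is sound: the diamond exclusion makes $G/T$ simple, cubic and $3$-connected, and expanding $t$ along the Hamilton path of $T$ between the two attachment vertices recovers all of $S\cap T$. But the two load-bearing steps are left open, and you say so yourself.

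For the $3$-edge-cut reduction the inductive statement ``any nine prescribed vertices lie on a cycle'' is not self-reproducing: to glue the two half-cycles you must be able to prescribe which pair of the three cut-edges each half uses, and there is no pigeonhole escape (three $2$-subsets, one guaranteed per side, no forced agreement). That is a genuinely stronger statement than the theorem itself. Pointing at ``the link-extension phenomenon developed in this paper'' does not supply it: Theorem~\ref{thm:strongPT} extends a $t$-link between two vertex \emph{sets} of $G$ to a larger link inside $G$, which is a different kind of control from forcing a cycle of a cubic graph through a chosen pair of edges of a $3$-edge-cut while also passing through nine marked vertices. For the cyclically $4$-edge-connected case, ``take a maximum cycle through $S$ and reroute'' is not an argument: Menger gives three internally disjoint $v$--$C$ paths, but any rerouting of $C$ along two of them risks ejecting other vertices of $S$, and the entire difficulty of the theorem --- and the reason the sharp bound is nine --- is exactly the accounting that you defer to ``the original computer-assisted proof.'' As written this is an outline of a known strategy, with the two hard steps replaced by appeals to the result you are trying to prove.
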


The cyclability problem has also been studied for 3-connected graphs in the presence of additional properties.  Ellingham et al. \cite{EHL} showed that a 3-connected cubic graph has a cycle which passes through any ten given vertices if and only if the graph is not contractible to the Petersen graph in such a way that the ten vertices each map to a distinct vertex of the Petersen graph.

A great deal of attention had been paid to cycles through specified edges as well.  Lov\'asz \cite{L}, and independently, Woodall \cite{W} conjectured that every $k$-connected graph has a cycle through any $k$ given independent edges unless these edges form an odd edge-cut. H\"aggkvist and Thomassen \cite{HaT} proved a weak version of the Lov\'asz-Woodall conjecture -- that every $k$-connected graph has a cycle through any $k-1$ given independent edges, which was conjectured by Woodall \cite{W}. A complete proof of the Lov\'asz-Woodall conjecture was announced by Kawarabayashi \cite{K1}, but a complete proof has yet to appear.  Holton and Thomassen \cite{HoT} conjectured that any cyclically $(k+1)$-connected cubic graph has a cycle through any given $k$ independent edges, and this still remains open.  

The existence problem of a cycle through certain given edges in graphs is equivalent to the existence problem of a  cycle through corresponding vertices in their line graphs with certain forbidden pairs of edges incident with these vertices.  A graph is {\em claw-free} if it contains no induced subgraph isomorphic to $K_{1,3}$. 
Note that line graphs form a subfamily of claw-free graphs.
Hence, it is 
interesting to study cyclability of claw-free graphs. An analogue of the above nine-point theorem has been obtained for claw-free graphs by the first two authors of this paper \cite{GP}.

\begin{thm}[\cite{GP}]\label{thm:9-vtx-clawfree}
Let $G$ be a 3-connected claw-free graph and $S$ be a set with $k\le 9$ vertices.
Then $G$ has a cycle containing all the vertices of $S$.\par
\end{thm}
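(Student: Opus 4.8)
The plan is to proceed by contradiction with an extremal-cycle argument, using the single structural fact about claw-free graphs that is relevant here: for every vertex $v$ the neighbourhood $N(v)$ has independence number at most two, i.e.\ $N(v)$ is the union of two cliques. First I would reduce to the case $|S|=9$ (for smaller $S$, enlarge it arbitrarily, using $|V(G)|\ge 9$; the finitely many graphs with $|V(G)|\le 9$ can be settled directly from $3$-connectedness). Assume no cycle of $G$ contains all of $S$, and pick a cycle $C$ with $|S\cap V(C)|$ maximum and, subject to that, extremal in length (the precise secondary condition being chosen so that the local moves below contradict it). Fix $s_0\in S\setminus V(C)$; then $|S\cap V(C)|\le 8$.

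Next, by Menger's theorem there is a fan of three paths from $s_0$ to $C$, pairwise meeting only at $s_0$ and otherwise disjoint from $C$, ending at vertices $v_1,v_2,v_3\in V(C)$ that cut $C$ into three arcs $A_1,A_2,A_3$, where $A_i$ is the arc avoiding $v_i$. If the interior of some $A_i$ contains no vertex of $S$, then deleting that interior and re-closing through $s_0$ by the two fan-paths ending at the endpoints of $A_i$ produces a cycle containing $s_0$ together with all of $S\cap V(C)$, i.e.\ strictly more of $S$ — contradicting the choice of $C$. Hence each of $A_1,A_2,A_3$ carries a vertex of $S$ in its interior, already accounting for three of the at most eight vertices of $S$ on $C$.

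The core of the argument is then to run this count down. At each $v_i$ the set consisting of the two neighbours of $v_i$ on $C$ and the neighbour of $v_i$ on its fan-path is not independent, so claw-freeness gives either a short chord of $C$ incident to $v_i$ or an extra edge from the fan-path into $C$ next to $v_i$; by the extremal choice of $C$, inserting that fan-path vertex onto $C$, or shortcutting along the chord, must fail to improve $C$, which pins down a rigid configuration around each $v_i$. Feeding these configurations back in — re-choosing the fan so that its endvertices reach \emph{past} the $S$-vertices of $C$, aiming fans at the $S$-vertices themselves, and re-optimising $C$ after every successful splice — one repeatedly frees an $S$-vertex from an arc until some arc becomes $S$-free and the easy reroute of the previous paragraph finishes the proof. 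Since $s_0\notin V(C)$ leaves only eight vertices of $S$ to track, this descent must terminate; this is exactly where the number $9$ enters, in parallel with the cubic nine-point theorem above.

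I expect the real obstacle to be precisely this last step: claw-freeness delivers only \emph{local} adjacencies near $C$, while keeping all of $S\cap V(C)$ through a reroute is a \emph{global} requirement, so the case analysis — organised by how $v_1,v_2,v_3$ and the $S$-vertices interleave around $C$, and by which of the two cliques covering each relevant neighbourhood the various vertices lie in — is long and fiddly. As a fallback I would try passing to Ryj\'a\v{c}ek's closure $cl(G)=L(H)$ with $H$ triangle-free and essentially $3$-edge-connected, reformulate ``a cycle through $S$'' as the existence of a suitable closed (eulerian-type) trail of $H$ meeting nine prescribed edges, and reduce to the cubic setting; but this route first demands showing that the closure preserves cyclability through $S$ and correctly translating the connectivity hypothesis, neither of which is immediate.
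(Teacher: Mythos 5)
This theorem is imported from \cite{GP} (Győri--Plummer, \emph{Stud.\ Sci.\ Math.\ Hungar.}\ 38 (2001)); the present paper cites it without reproducing the proof, so there is no internal proof to compare against. That said, your framework is recognisably the same toolkit that both \cite{GP} and the present paper use: extremal cycle, a Menger fan from a missed vertex of $S$ to $C$, and claw-freeness exploited as a local adjacency near fan endpoints — what this paper later systematises as ``Jumper Operations.'' Your opening reduction, the choice of $C$ maximising $|S\cap V(C)|$, the three-path fan, and the observation that an $S$-free arc would let you splice $s_0$ in for a contradiction are all correct and are essentially how such arguments begin.

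The genuine gap is the entire middle of the proof. After concluding that each of the three fan-arcs carries at least one $S$-vertex, you describe a ``descent'' — re-aiming fans, shortcutting along chords forced by claw-freeness, re-optimising $C$ — that is supposed to eventually empty an arc of $S$-vertices, but you give no invariant that decreases, no precise rule for the next move, and no proof that the process reaches an $S$-free arc rather than cycling among rigid configurations. Crucially, the claim that ``the number $9$ enters'' because only eight $S$-vertices remain to track is not an argument: your step so far only accounts for three $S$-vertices (one per arc), and nothing you have written distinguishes $9$ from $10$ or from $8$. In the analogous cubic nine-point theorem the bound $9$ emerges from a detailed interleaving analysis (and is witnessed tight by the Petersen graph), not from a simple head-count; the same refined bookkeeping is exactly the part you have left out, and you acknowledge as much. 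As it stands the proposal is a correct setup with the core case analysis missing. Your closure-based fallback is also not a proof: you would need a lemma that Ryjáček's closure preserves ``there is a cycle through this specified $9$-set,'' which does not follow from circumference-preservation and would require its own argument.
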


Recently, Chen \cite{ZC} show that a 3-connected claw-free graph $G$ has a cycle through any given twelve vertices if the underlying graph of its closure (a line graph) cannot be contracted to the Petersen graph in certain ways. 
In \cite{EHL}, Ellingham et. al. proved there is a cycle through any five vertices in a 3-connected cubic graph which avoids any specified edge.

In this paper, we consider the cyclability problem for graphs when certain sets of vertices are to be included and other sets are to be avoided. Let $G$ be any connected graph containing a cycle and $m$ and $n$, two non-negative integers with $1\le m+n\le |V(G)|$.
Then graph $G$ is said to satisfy {\em property $C(m,n)$} (or simply, $G$ is $C(m,n)$), if for  any two disjoint sets $S_1$ and $S_2$ contained in $V(G)$
with $|S_1|=m$ and $|S_2|=n$,
there is a cycle $C$ in $G$ such that $S_1\subseteq V(C)$, but $S_2\cap V(C)=\emptyset$.
When $n=0$, the maximum value of $m$ such that there is a cycle through every set $S\subseteq V(G)$ with $|S|\le m$
is known as the {\it cyclability} of $G$. Of course the case when $m=|V(G)|$ is just the well-studied Hamilton cycle problem. 
It has been shown in \cite{H, HP} that a graph $G$ is $k$-connected if and only if $G$ is $C(k-l, l)$ for all integer $0\le l\le k-2$, and in \cite{H,MW} that a graph  $G$ is $k$-connected if and only if $G$ is $C(2, k-2)$. So, in particular, a graph $G$ is 3-connected if and only if $G$ is $C(2,1)$. 

Cyclability problems and their near-relatives, have been widely studied for many graph classes.
We make no attempt at a comprehensive listing of results in this area here, but instead refer the reader to the recent survey of
Gould [G]. For the class of claw-free graphs there are many published results as well.
We refer the interested reader to \cite{FFR} which surveys results for claw-free graphs. Chudnovskey and Seymour recently 
published a deep analysis of claw-free graphs in a series of eight papers. (Cf. \cite{CS1} for more information.)   








Note that, a $k$-connected graph may not have a cycle through any given $k$ vertices which avoids a specified vertex. For example, the complete bipartite graph $K_{k,k}$ cannot have a cycle through all $k$ vertices in one bipartite set which avoids a vertex from the other bipartite set. Hence the graph $K_{3,3}$ shows that a 3-connected cubic graph may not be $C(3,1)$. Similarly, the 3-cube ($Q_3$) demonstrates that a 3-connected plane graph may not be $C(4,1)$. However, the connectivity of a graph does have a strong connection with the property $C(m,n)$.

Matthews and Sumner \cite{MS} conjectured that every 4-connected claw-free graph has cyclability $|V(G)|$, that is, every such graph has a Hamilton cycle. This conjecture still remains open. An immediate corollary of Theorem~\ref{thm:9-vtx-clawfree} guarantees that a 4-connected claw-free graph is $C(9,1)$. But what about 3-connected claw-free graphs? Combining the results of Sections 3,4 and 5, we prove the following theorem which is our main result on cyclability in 3-connected claw-free graphs.\par

\begin{thm}\label{thm:main}
Let $G$ be a 3-connected claw-free graph and $S$ be a set with $k\le 6$ vertices.
Then for every vertex of $S$, $G$ has a cycle avoiding this vertex  but containing all the remaining $k-1$ vertices of $S$. 
\end{thm}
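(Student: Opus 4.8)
By padding $S$ with arbitrary extra vertices when $k<6$ (and checking the finitely many graphs on at most five vertices directly), it suffices to prove that every $3$-connected claw-free graph $G$ is $C(5,1)$: for every vertex $v$ and every $5$-set $S=\{s_1,\dots,s_5\}\subseteq V(G)\setminus v$, the graph $H:=G-v$ contains a cycle through $S$. Note $H$ is claw-free (an induced subgraph of $G$) and, since $G$ is $3$-connected, $H$ is $2$-connected. The easy case is immediate: if $H$ is in fact $3$-connected, then Theorem~\ref{thm:9-vtx-clawfree} applied to $H$ yields a cycle through any at most nine vertices, in particular through $S$, and that cycle avoids $v$. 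So all the content lies in the case where $H$ is only $2$-connected.

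The next step is to control the $2$-cuts of $H$. Let $\{x,y\}$ be a $2$-cut of $H$ with components $D_1,\dots,D_r$ of $H-\{x,y\}$. Since $G$ is $3$-connected, $\{x,y\}$ cannot separate any $D_i$ from the rest of $G$, so $v$ has a neighbour in every $D_i$; picking one neighbour of $v$ in each $D_i$ gives $r$ pairwise non-adjacent neighbours of $v$, and claw-freeness forces $r\le 2$. Hence every $2$-cut of $H$ leaves exactly two components $D_1,D_2$, and $v$ ``bridges'' them. This is the structural fact that makes the target cycle feasible at all: a cycle avoiding $v$ can still reach both sides, entering and leaving each through $x$ and $y$. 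The vertices of $S$ cannot all lie in $\{x,y\}$ (as $|S|=5$), so I would split into the case $S\subseteq D_1\cup\{x,y\}$ (symmetric for $D_2$) and the case $S$ meets both $D_1$ and $D_2$.

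The plan is then to assemble the cycle along the $2$-cuts of $H$. Choose a $2$-cut $\{x,y\}$ with one side $\overline{D}_1:=G[D_1\cup\{x,y\}]$ as small as possible; a standard innermost-cut argument makes $\overline{D}_1+xy$ either $3$-connected or a small graph handled by hand. If $S\subseteq D_1\cup\{x,y\}$, then a cycle through $S$ in $H$ may be taken either to lie in $\overline{D}_1$ or to use a fixed $x$–$y$ path through $D_2$, so it suffices to find a cycle through $S\cup\{x,y\}$ (at most seven vertices) in $\overline{D}_1+xy$; if $S$ meets both sides, I want an $x$–$y$ path through $S\cap D_i$ inside $\overline{D}_i$ for $i=1,2$ and glue the two along $\{x,y\}$. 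When $\overline{D}_1+xy$ is claw-free this is immediate from Theorem~\ref{thm:9-vtx-clawfree}; the trouble is that the virtual edge $xy$ can create a claw at $x$ or $y$ (exactly when that endpoint has two non-adjacent private neighbours on the $D_1$ side, both non-adjacent to the other endpoint). In that degenerate situation I would not use the nine-vertex theorem on the piece, but instead invoke the $t$-link-to-$k$-link extension theorem proved above: take $\{x,y\}$ (or a small enlargement of it) as the two sets to be linked, realise the part of $S$ in each piece as the endvertices of a short $t$-link inside that piece, and extend it, endvertices preserved, to the larger link that reassembles into the required $x$–$y$ subpaths. An outer induction on $|V(H)|$ then reduces the far side $D_2$ together with $x,y$ to a strictly smaller $3$-connected claw-free instance, and its solution is patched in through $\{x,y\}$ by the same extension theorem.

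I expect the main obstacle to be precisely this last step: passing back and forth between ``cycle through a set in a $3$-connected claw-free graph'' and ``$x$–$y$ path through a set in a $2$-connected piece'', while controlling the claws that the virtual edges introduce. Equivalently, the hard part is making the nine-vertex theorem and the link-extension theorem cooperate across the $2$-cuts of $G-v$; enumerating the degenerate local configurations at $v$ and at the cut (small degree of $v$, $x$ or $y$ adjacent to $v$, a vertex of $S$ coinciding with a cut vertex or with a dominating vertex of $N(v)\cap D_i$) is where the argument turns technical, and it is in exactly these configurations that the two-set link-extension theorem, rather than the nine-vertex theorem, is the tool that actually produces the cycle.
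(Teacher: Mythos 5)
Your plan and the paper's proof diverge fundamentally. The paper never decomposes $H=G-z$ along its $2$-cuts; it stays inside the $3$-connected graph $G$, runs a minimum-counterexample argument, and shows (their Lemmas 3.1--3.3) that $z$ lies in a $3$-cut $T$ of $G$ that isolates a single vertex of $S$. It then builds the cycle incrementally: start from a cycle through $k-1$ of the $x_i$'s (supplied by the already-proved $C(k-1,1)$ case), apply Perfect's Theorem and its two-sided strengthening to bring in the missing $x_i$, and resolve all the endpoint collisions by ``Jumper Operations'' that exploit claw-freeness directly at a vertex of degree $\ge 4$. The nine-vertex theorem is never invoked at all; the whole point of their Sections 3--5 is to rebuild the cyclability from scratch so that the $z$-avoidance constraint can be threaded through.

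Your approach is appealing as a reduction, but there are two genuine gaps. First, in the case where $S$ meets both sides of a $2$-cut $\{x,y\}$ of $H$, you want an $x$--$y$ path through $S\cap D_i$ inside $\overline D_i$, to be glued along $\{x,y\}$. That is a strictly stronger demand than a cycle through $(S\cap D_i)\cup\{x,y\}$ in $\overline D_i+xy$: a cycle through $\{a,b,x,y\}$ splits into two $x$--$y$ arcs, and there is no reason $a$ and $b$ should land on the same arc. Neither the nine-vertex theorem nor the link-extension theorem produces ``$x$--$y$ path through prescribed interior vertices'' as stated, and you give no substitute argument. Second, the virtual edge $xy$ really can create a claw at $x$ or $y$ (exactly as you anticipate), and the proposed fix --- replacing the nine-vertex theorem by the two-set link-extension theorem in that configuration --- is not worked out: the link-extension theorem extends a $t$-link between two prescribed vertex sets to a $k$-link, but you do not specify what the two sets are, why they are $k$-linked, or how the resulting disjoint paths would assemble into a single cycle through all of $S$ avoiding $v$. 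The paper dodges this entire issue by never forming $\overline D_i+xy$; when it does cut-and-paste (Lemma 3.3), it adds a triangle containing $z$ to a piece coming from a $3$-cut of $G$, which is a different, claw-safe surgery. Until the $x$--$y$-path reduction and the virtual-edge claw are handled concretely, the sketch does not yield a proof.
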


The size of $S$ in Theorem~\ref{thm:main} is the best possible since there are infinitely many 3-connected claw-free graphs which has a vertex subset $S$ of size 7 such that $G$ does not have a cycle avoiding one vertex of $S$, but containing all the remaining $k-1$ vertices. Two main tools that we use to prove Theorem~\ref{thm:main} are Perfect's Theorem and a strengthened version of Perfect's Theorem which is proved in Section 2. 

As a consequence of this result, we can easily guarantee  small topological wheel minors $W_k$ in 3-connected claw-free graphs as follows.

\begin{cor}
Let $G$ be a 3-connected claw-free graph. Then the following hold:\\
(1) $G$ has a  a topological wheel minor $W_k$ with $k\le 5$ if and only if $G$ has a vertex of degree at least $k$;\\
(2) For any vertex $z$ of degree $k\le 5$, $G$ has a topological wheel minor $W_k$ with $z$ as its hub;\\
(3) For any given six vertices, $G$ has a subdivision of $W_3$ (or $K_4$) containing any five of the six vertices on its rim and the remaining vertex as its hub.  
\end{cor}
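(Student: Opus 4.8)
The plan is to deduce all three parts of the corollary directly from Theorem~\ref{thm:main}, using in addition only the standard fan form of Menger's theorem. Recall that a topological $W_k$ minor is a subgraph of $G$ that is a subdivision of the wheel $W_k$ (a $k$-cycle, the \emph{rim}, together with a \emph{hub} vertex joined to all $k$ rim vertices), and that $W_3 = K_4$. For part (2), let $z$ be a vertex with $\deg(z)=k\le 5$, and let $v_1,\dots,v_k$ be the (distinct) neighbors of $z$. Put $S=\{z,v_1,\dots,v_k\}$, so $|S|=k+1\le 6$. By Theorem~\ref{thm:main} applied to $S$ with $z$ the vertex to be avoided, $G$ has a cycle $C$ with $\{v_1,\dots,v_k\}\subseteq V(C)$ and $z\notin V(C)$. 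Then the subgraph consisting of $C$, the vertex $z$, and the $k$ edges $zv_1,\dots,zv_k$ is a subdivision of $W_k$: the rim is $C$ with branch vertices $v_1,\dots,v_k$ (taken in their cyclic order along $C$), the hub is $z$, and the spokes are the unsubdivided edges $zv_i$. Since $z\notin V(C)$ these spokes are internally disjoint from $C$ and from one another, so this is the desired topological $W_k$ minor with hub $z$. (If $\deg(z)\ge k$ one selects any $k$ neighbors and argues identically.)

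Part (1) follows at once. If $G$ has a vertex of degree at least $k$ with $k\le 5$, then the argument of part (2), applied to $k$ of its neighbors, produces a topological $W_k$ minor. Conversely, the hub of any subdivision of $W_k$ has degree $k$ in that subdivision, hence degree at least $k$ in $G$, so $G$ has a vertex of degree at least $k$. This proves the stated equivalence.

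For part (3), let the six given vertices be $u_1,\dots,u_6$ and, by symmetry, take $z=u_6$ to be the prescribed hub. Applying Theorem~\ref{thm:main} to $S=\{u_1,\dots,u_6\}$ (of size exactly $6$) with $u_6$ the avoided vertex yields a cycle $C$ with $\{u_1,\dots,u_5\}\subseteq V(C)$ and $u_6\notin V(C)$; in particular $|V(C)|\ge 5\ge 3$. Since $G$ is $3$-connected, the fan version of Menger's theorem provides three paths $P_1,P_2,P_3$ from $z=u_6$ to $V(C)$, pairwise meeting only at $z$, each $P_i$ meeting $V(C)$ only in its terminal vertex $w_i$, with $w_1,w_2,w_3$ distinct. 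Then $C\cup P_1\cup P_2\cup P_3$ is a subdivision of $W_3=K_4$: its branch vertices are $z,w_1,w_2,w_3$, the three arcs of $C$ cut off by $w_1,w_2,w_3$ are the rim paths, and $P_1,P_2,P_3$ are the spokes. As $u_1,\dots,u_5\in V(C)$, all five of them lie on the rim, as required.

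Given Theorem~\ref{thm:main}, none of this is genuinely hard; the only points needing (routine) care are the invocation of the fan lemma in part (3) — one must use that the endpoints $w_i$ can be chosen distinct and the $P_i$ internally disjoint from $C$, which is exactly what the fan lemma delivers — and the verification in parts (2) and (3) that the assembled subgraph is indeed a subdivision of the relevant wheel (in particular that the hub has degree exactly $k$ within it, and that the rim contains the prescribed vertices). I do not expect any obstacle beyond this bookkeeping.
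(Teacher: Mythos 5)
Your proposal is correct, and it supplies exactly the argument the paper leaves implicit: the paper states this corollary as an immediate consequence of Theorem~\ref{thm:main} without writing out a proof, and what you give is the intended deduction (apply Theorem~\ref{thm:main} to $z$ together with $k\le 5$ of its neighbors to get the rim avoiding $z$, then use the neighbor edges as spokes for (1) and (2), and for (3) use the fan lemma from $z$ to the cycle through the other five). The only small points worth noting, which you already handle, are that $3$-connectivity guarantees $\deg(z)\ge 3$ so $W_k$ is meaningful, and that the cycle in part (3) has at least $5\ge 3$ vertices so the fan lemma applies.
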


In section 6, we consider graphs embedded in closed surfaces.  An embedding of a graph in a surface is {\em polyhedral} if the boundaries of every two faces meet properly, i.e., their intersection is either empty, or a single vertex or an edge. An immediate consequence of Theorem~\ref{thm:general-C(3,1)} is the following result for graphs polyhedrally embedded in surfaces.

\begin{thm}\label{thm:poly}
Let $G$ be a graph polyhedrally embedded in a closed surface. Then $G$ has a cycle  through any given three vertices which avoids any other specified vertex. 
\end{thm}

The above result is the best possible since there are infinitely many graphs polyhedrally embedded in a closed surface which have no cycle passing through a certain set of four vertices but avoiding a fifth specified vertex. We describe  such an infinite class in Section 6. Since $K_{3,3}$ has a closed 2-cell embedding in the projective plane, the polyhedral embedding assumption in Theorem~\ref{thm:poly} is necessary. 

\section{Perfect's Theorem and its Generalizations}

We now introduce a theorem on disjoint paths in graphs, due to Perfect \cite{P}, which deserves to be more widely known in graph theory. 
Let $G$ be a graph and  $S$ be a  subset of $V(G)$. For clarity, use``$\backslash$" to denote the deletion operation for sets but use ``$-$" to denote deletion operation for graphs. In other words, $V(G)\backslash S$ is the vertex subset of $V(G)$ with no vertices in $S$, but $G-S$ is the subgraph of $G$ obtained by deleting all vertices in $S$ from $G$ together with all edges incident to the vertices of $S$.

Let $G$ be a graph and $S$ be a non-empty subset of  $V(G)$.
Suppose $v\in V(G)\backslash S$.
We say that $v$ and $S$ are {\it $k$-linked} in $G$ if there exist $k$ internally disjoint paths joining $v$ and $k$ distinct vertices
of $S$ and such that each of the $k$ paths meets $S$ in exactly one vertex.

\begin{thm} [Perfect's Theorem \cite{P}]  \label{thm:PT}
Let $G$ be a graph, and let $x\in V(G)$ and $S\subseteq V(G)\backslash \{x\}$ such that $x$ and $S$ are $k$-linked in $G$.
If $S$ has a subset $T$ of size $k-1$ such that $x$ and $T$ are $(k-1)$-linked,
then there exists a vertex $s\in S-T$ such that $x$ and $T\cup\{s\}$ are $k$-linked.
\end{thm}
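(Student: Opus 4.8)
The statement is, up to reformulation, exactly the exchange axiom of a matroid, so the plan is to recognize it as such and then verify the relevant augmentation property by an alternating‑path argument in the spirit of Menger's theorem. For a vertex $x$, let $\mathcal{L}_x$ be the family of all $U\subseteq V(G)\backslash\{x\}$ such that $x$ and $U$ are $|U|$‑linked in $G$. The first observation I would make is that $\mathcal{L}_x$ is downward closed: if $R_1,\dots,R_m$ are internally disjoint $x$–paths witnessing the $m$‑linkage of $x$ to $U$ and $U'\subseteq U$, then the subcollection of those $R_i$ whose terminal vertex lies in $U'$ witnesses the $|U'|$‑linkage of $x$ to $U'$, because each $R_i$ meets $U$—hence $U'$—only at its last vertex. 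Granting this, Theorem~\ref{thm:PT} becomes precisely the following augmentation statement, applied with $A=T$ and $B$ equal to a $k$‑element subset of $S$ realizing the $k$‑linkage of $x$ and $S$: if $A,B\in\mathcal{L}_x$ and $|A|<|B|$, then there is some $b\in B\setminus A$ with $A\cup\{b\}\in\mathcal{L}_x$. (Equivalently, $\mathcal{L}_x$ is a matroid on ground set $V(G)\backslash\{x\}$, namely a gammoid; the output vertex $s$ is then $b$, which lies in $B\setminus T\subseteq S-T$.)

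To prove the augmentation statement I would fix internally disjoint $x$–paths $\mathcal{Q}=\{Q_1,\dots,Q_{k-1}\}$ from $x$ to $T=\{t_1,\dots,t_{k-1}\}$ and $\mathcal{P}=\{P_1,\dots,P_k\}$ from $x$ to $B=\{s_1,\dots,s_k\}\subseteq S$, each path meeting its target set only at its end, and then choose this pair so that $\bigcup\mathcal{Q}$ shares as many edges as possible with $\bigcup\mathcal{P}$ (equivalently, so that the number of edges of $\bigcup\mathcal{Q}$ outside $\bigcup\mathcal{P}$ is minimum). Working inside the union of the two fans, I would run an alternating search from $x$—tracing forward along edges of $\bigcup\mathcal{P}$ and backward along edges of $\bigcup\mathcal{Q}$, re‑routing along the shared segments—to assemble a new family of $k$ internally disjoint $x$–paths whose set of terminal vertices is $T\cup\{s_i\}$ for some $s_i\notin T$. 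The extremal choice of $(\mathcal{P},\mathcal{Q})$ is what rules out the bad outcomes of this search (closing into a cycle, getting trapped among the vertices of $T$, or producing a shorter admissible $\mathcal{Q}$), forcing the search to terminate at a vertex of $B$ not already in $T$.

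The main obstacle is exactly this last step: setting up the alternating walk so that the re‑routed collection is genuinely internally disjoint and each of its paths meets $T\cup\{s_i\}$ in a single vertex, and using the minimality of $(\mathcal{P},\mathcal{Q})$ to exclude every configuration in which the walk fails to reach a fresh vertex of $S$. This is precisely the work needed to give a hands‑on proof that gammoids satisfy the matroid exchange axiom; everything surrounding it (the downward‑closure remark, passing from $S$ to a $k$‑element witness $B$, and reading off $s\in S-T$ at the end) is routine. An alternative to the alternating‑path argument is an induction on $|V(G)|$ or $|E(G)|$ in which one deletes or contracts a vertex not on $\bigcup\mathcal{Q}$ while preserving the $k$‑linkage of $x$ to $S$, but keeping the hypotheses intact under such reductions seems no less delicate than the alternating‑path bookkeeping, so I would favor the latter.
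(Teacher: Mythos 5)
The paper does not actually prove Theorem~\ref{thm:PT}; it quotes it from Perfect \cite{P} and instead proves, via an auxiliary-graph-plus-Menger argument, the two-sided strengthening Theorem~\ref{thm:strongPT}. Your reformulation is sound: the family $\mathcal{L}_x$ of subsets of $V(G)\setminus\{x\}$ reachable from $x$ by a disjoint fan is indeed downward closed, and Perfect's Theorem---applied with $B$ the $k$-set of endpoints of a fan witnessing the $k$-linkage of $x$ and $S$---is exactly the augmentation (exchange) axiom for this family, i.e.\ the statement that a strict gammoid is a matroid. The difficulty is that this augmentation axiom \emph{is} the entire theorem, and your write-up does not prove it. The alternating-walk construction is described only at the level of intent (``trace forward along $\bigcup\mathcal{P}$, backward along $\bigcup\mathcal{Q}$, re-route along shared segments''), and you yourself identify the decisive step---showing the re-routed fan is internally disjoint, has terminal set $T\cup\{s\}$ with each path meeting it once, and using extremality of $(\mathcal{P},\mathcal{Q})$ to exclude the failure modes---as an unresolved obstacle. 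As written, the proposal restates Perfect's Theorem rather than proving it; it would only become a proof if you either carried out the alternating-path bookkeeping or explicitly invoked the known result that strict gammoids are matroids, neither of which you do.

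If you want a self-contained argument, the construction in the paper's proof of Theorem~\ref{thm:strongPT} adapts directly and is shorter than the alternating-path route. Form $\hat{G}$ from $G$ by adding a vertex $t_0$ adjacent to every vertex of $S\setminus T$ and a vertex $y$ adjacent to $T\cup\{t_0\}$, so $\deg_{\hat G}(y)=k$. Every $x$--$y$ separator $U$ in $\hat G$ has $|U|\ge k$: if $t_0\in U$, then $U\setminus\{t_0\}$ would be a set of size at most $k-2$ meeting every one of the $k-1$ internally disjoint $x$--$T$ paths, impossible since those paths pairwise share only $x\notin U$; if $t_0\notin U$, then $U$ meets every $x$--$S$ path in $G$ (extend any such path to $y$ through $t_0$ or directly), so $|U|\ge k$ by the $k$-linkage of $x$ and $S$. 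Menger now gives $k$ internally disjoint $x$--$y$ paths in $\hat G$; since $\deg(y)=k$ these use all $k$ edges at $y$, in particular $yt_0$, and deleting $y$ and $t_0$ from that path leaves a path from $x$ to some $s\in S\setminus T$. Internal disjointness of the $k$ paths in $\hat G$ forces each of the resulting $k$ paths in $G$ to meet $T\cup\{s\}$ only at its own endpoint, which is exactly the $k$-linkage of $x$ and $T\cup\{s\}$.
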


The following result strengthens Perfect's Theorem, and will be particularly useful in our work in this paper. Two  disjoint subsets $S_1$ and $S_2$ are {\it $k$-linked} if there exist $k$ disjoint paths joining $k$ distinct vertices of $S_1$ to $k$ distinct vertices of $S_2$ such that each of the $k$ paths meets $S_i$ in exactly one vertex for $i\in \{1,2\}$. 

\begin{thm}\label{thm:strongPT}
Let $G$ be a graph and let $S_1$ and $S_2$ be two disjoint subsets of $V(G)$ such that $S_1$ and $S_2$ are $k$-linked.
If each $S_i$ has a subset $T_i$ of size $k-1$  for $i\in\{1,2\}$  such that $T_1$ and $T_2$ are $(k-1)$-linked,
then there is a vertex $s_i\in S_i-T_i$ for $i\in \{1,2\}$ such that $T_1\cup \{s_1\}$ and $T_2\cup \{s_2\}$ are $k$-linked.  
\end{thm}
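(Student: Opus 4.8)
The plan is to derive Theorem~\ref{thm:strongPT} from two successive applications of Perfect's Theorem (Theorem~\ref{thm:PT}) --- one that enlarges $T_1$ to a set $T_1\cup\{s_1\}$, and a second that enlarges $T_2$ to a set $T_2\cup\{s_2\}$ --- using the given $(k-1)$-linkage between $T_1$ and $T_2$ as a bridge that is exploited in both steps. So I would fix a $k$-linkage $\mathcal{M}$ witnessing that $S_1$ and $S_2$ are $k$-linked and a $(k-1)$-linkage $\mathcal{Q}=\{Q_1,\dots,Q_{k-1}\}$ between $T_1$ and $T_2$, with $Q_i$ running from $t_i\in T_1$ to $u_i\in T_2$. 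The case $k=1$, where $T_1=T_2=\emptyset$, is immediate from $S_1,S_2$ being $1$-linked, so I would assume $k\geq 2$ throughout.

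To enlarge $T_1$, I would form $G^{+}=G+x_2$, where $x_2$ is a new vertex adjacent to exactly the vertices of $S_2$. Appending to each path of $\mathcal{M}$ the edge from its $S_2$-endpoint to $x_2$ shows that $x_2$ and $S_1$ are $k$-linked in $G^{+}$, and doing the same to the paths of $\mathcal{Q}$ (whose $S_2$-endpoints lie in $T_2\subseteq S_2$) shows that $x_2$ and $T_1$ are $(k-1)$-linked in $G^{+}$. Since $|T_1|=k-1$, Theorem~\ref{thm:PT} applied with $x=x_2$, $S=S_1$, $T=T_1$ then supplies a vertex $s_1\in S_1\setminus T_1$ and a $k$-linkage from $x_2$ to $T_1\cup\{s_1\}$ in $G^{+}$. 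I would delete $x_2$ from these $k$ paths and, along each resulting path, retain only the segment from its endpoint in $T_1\cup\{s_1\}$ to the first vertex of $S_2$ it meets; this yields a family $\mathcal{R}$ of $k$ pairwise disjoint paths of $G$, each joining a distinct vertex of $T_1\cup\{s_1\}$ to a distinct vertex of $S_2$ and meeting each of $T_1\cup\{s_1\}$ and $S_2$ in exactly one vertex. (The only thing to check here is that no path of the $x_2$-linkage is a single edge, which would place a vertex of $T_1\cup\{s_1\}\subseteq S_1$ inside $S_2$.)

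To enlarge $T_2$, I would form $G^{\ast}=G+y_1$, where $y_1$ is a new vertex adjacent to exactly the $k$ vertices of $T_1\cup\{s_1\}$. Appending to each path of $\mathcal{R}$ the edge from its endpoint in $T_1\cup\{s_1\}$ to $y_1$ shows that $y_1$ and $S_2$ are $k$-linked in $G^{\ast}$, and appending to each path $Q_i$ of $\mathcal{Q}$ the edge $t_iy_1$ (legitimate because $t_i\in T_1\subseteq T_1\cup\{s_1\}$) shows that $y_1$ and $T_2$ are $(k-1)$-linked in $G^{\ast}$. Applying Theorem~\ref{thm:PT} with $x=y_1$, $S=S_2$, $T=T_2$ produces a vertex $s_2\in S_2\setminus T_2$ and a $k$-linkage from $y_1$ to $T_2\cup\{s_2\}$ in $G^{\ast}$. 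Deleting $y_1$ from its $k$ internally disjoint paths, I would note that $y_1$ has exactly $k$ neighbours, so these paths must begin at all $k$ of them; hence removal of $y_1$ leaves $k$ pairwise disjoint paths of $G$, each from a distinct vertex of $T_1\cup\{s_1\}$ to a distinct vertex of $T_2\cup\{s_2\}$. Since every vertex of $T_1\cup\{s_1\}$ is the initial vertex of exactly one of these paths, none of them can meet $T_1\cup\{s_1\}$ again, and the condition of meeting $T_2\cup\{s_2\}$ exactly once is inherited; so $T_1\cup\{s_1\}$ and $T_2\cup\{s_2\}$ are $k$-linked, as required.

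Most of the work in writing this up is routine bookkeeping: checking that each edge-appended family really does satisfy the definition of being $k$-linked or $(k-1)$-linked (internal disjointness together with the ``exactly one vertex'' clauses), and justifying the shortcutting step after $x_2$ is removed. The one genuinely load-bearing point, and the step I expect to need the most care, is the choice of the second apex $y_1$: making its degree exactly $k=|T_1\cup\{s_1\}|$ forces Perfect's Theorem, applied at $y_1$, to route through the whole of $T_1\cup\{s_1\}$, which is precisely what converts its vertex-to-set conclusion back into the set-to-set linkage the theorem asks for.
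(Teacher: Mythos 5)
Your proof is correct, but it takes a genuinely different route from the paper's. You reduce the set-to-set statement to two sequential applications of Perfect's Theorem, each time constructing a degree-controlled apex: first $x_2$ adjacent to all of $S_2$ (to enlarge $T_1$ to $T_1\cup\{s_1\}$ via Perfect's Theorem at $x_2$), then $y_1$ adjacent to exactly the $k$ vertices of $T_1\cup\{s_1\}$ (to enlarge $T_2$ to $T_2\cup\{s_2\}$, using the degree constraint to force the $k$ paths through all of $T_1\cup\{s_1\}$). The paper instead argues once, directly from Menger's theorem: it attaches apex vertices $x_1,x_2$ to $S_1,S_2$, \emph{identifies} the vertices of $S_i\setminus T_i$ into a single new vertex $y_i$, and shows by the cut argument that $x_1$ and $x_2$ remain $k$-connected in the resulting auxiliary graph; since each $x_i$ then has degree exactly $k$ (to $T_i\cup\{y_i\}$), the $k$ Menger paths use both $y_1$ and $y_2$, which translates back to the desired vertices $s_1,s_2$. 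Both arguments are sound and use the same ``degree-$k$ apex'' device to force traversal; your version is a cleaner logical reduction that exhibits the strengthened theorem as a corollary of Perfect's Theorem itself, while the paper's identification trick handles both sides symmetrically in one pass and is self-contained modulo Menger. Your key observation --- that giving the second apex degree exactly $k$ is what turns Perfect's vertex-to-set conclusion back into a set-to-set linkage --- is exactly right, and the shortcutting step and the ``paths are pairwise disjoint, so no path re-enters $T_1\cup\{s_1\}$'' step are both correctly justified.
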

\begin{proof}  
Add two new vertices $x_1$ and $x_2$ to the graph $G$, and join $x_i$ to all vertices of $S_i$  for each  $i\in \{1,2\}$. Then identify all vertices in $S_i\backslash T_i$ and denote the resulting vertex by $y_i$. 
Let $G'$ be the resulting graph.
Let $U$ be a minimum vertex-cut of $G'$ which separates $x_1$ and $x_2$.
We claim that: \medskip

\noindent{\bf Claim:} $|U|\ge k$. \medskip

\noindent{\em Proof of Claim.} Suppose to the contrary that $|U|< k$.
If $y_i\in U$ for some $i\in \{1,2\}$, then $U\backslash \{y_i\}$ is a cut of size at most $k-2$ which separates $x_1$ and $x_2$
in $G'-\{y_1,y_2\}$.
So there are at most $k-2$ internally disjoint paths from $x_1$ to $x_2$ in $G'-\{y_1,y_2\}$.
Hence there are at most $k-2$ disjoint paths joining vertices of $T_1$ and vertices of $T_2$, contradicting
the assumption
that $T_1$ and $T_2$ are $(k-1)$-linked.
Hence $U\cap \{y_1,y_2\}=\emptyset$.
It follows that $U$ is a cut separating the subgraphs $x_1y_1$ and $x_2y_2$.  

So $G'$ has at most $k-1$ disjoint paths from $T_1\cup \{y_1\}$ to $T_2\cup\{y_2\}$. On the other hand, since 
$S_1$ and $S_2$ are $k$-linked, there are $k$ disjoint paths between $S_1$ and $S_2$.
So there are $k$ internally disjoint paths between $T_1\cup \{y_1\}$ and $T_2\cup \{y_2\}$, a contradiction.
This contradiction implies that $|U|\ge k$ and the proof of the Claim is complete. \medskip

By the above Claim and Menger's Theorem, the graph $G'$ has $k$ internally disjoint paths joining $x_1$ and $x_2$.
Among these $k$ internally disjoint paths from $x_1$ to $x_2$, one passes through $y_1$ and one passes through $y_2$. Deleting $x_1$ and $x_2$ from these $k$ internally disjoint paths generates $k$ disjoint paths between $T_1\cup \{y_1\}$ and $T_2\cup \{y_2\}$ in $G'$, and hence $k$ disjoint paths between $S_1$ and $S_2$ in $G$.
For $i\in \{1,2\}$, let $s_i\in S_i$ be the endvertex of the path of $G$ corresponding to the path of $G'$ passing through $y_i$.   
Note that $|T_1|=|T_2|=k-1$. So each of these $k$ disjoint paths of $G$ between $S_1$ and $S_2$ meets $S_i$ exactly one vertex. Hence $T_1\cup \{s_1\}$ and $T_2\cup \{s_2\}$ are $k$-linked. This completes the proof.
\end{proof}

The above result cannot be further strengthened 
so as to guarantee that the extended $k$-link contain a $(k-1)$-link between $T_1$ and $T_2$.
For example, the graph in Figure~\ref{fig:link} does not have three disjoint paths between $S_1$ and $S_2$ which contain two disjoint paths between $T_1$ and $T_2$. But $T_1$ and $T_2$ are 2-linked, and $S_1$ and $S_2$ are 3-linked.

\begin{figure}[!htb] 
\begin{center}
\begin{tikzpicture}[thick, scale=1]

\filldraw [] (0,0) circle (3pt); 
\filldraw [] (1.5,0) circle (3pt);
\filldraw [] (1.5,1.5) circle (3pt);
\filldraw [] (1.5,-1.5) circle (3pt);
\filldraw [] (3.5,0) circle (3pt);
\filldraw [] (3.5,1.5) circle (3pt);
\filldraw [] (3.5,-1.5) circle (3pt);
\filldraw [] (5.5,0) circle (3pt);
\filldraw [] (5.5,1.5) circle (3pt);
\filldraw [] (5.5,-1.5) circle (3pt); 
\filldraw [] (7,0) circle (3pt); 

  \draw[] (0,0)--(1.5,0);
   \draw[] (0,0)--(1.5,1.5); 
    \draw[] (0,0)--(1.5,-1.5);

  \draw[thin, black] (1.2, 1.6) to [bend left=50] (1.3, 1.7)--(1.7, 1.7) to [bend left=50] (1.8,1.6)--(1.8, -0.1) to [bend left=50] (1.7, -0.2)--(1.3, -0.2) to [bend left=50] (1.2, -0.1) --(1.2,1.6);
  
  \draw[thin, black] (1.1, 1.7) to [bend left=50] (1.2, 1.8)--(1.8, 1.8) to [bend left=50] (1.9,1.7)--(1.9, -1.7) to [bend left=50] (1.8, -1.8)--(1.2, -1.8) to [bend left=50] (1.1, -1.7) --(1.1,1.7);
  
    \draw[thin, black] (5.2, 1.6) to [bend left=50] (5.3, 1.7)--(5.7, 1.7) to [bend left=50] (5.8,1.6)--(5.8, -0.1) to [bend left=50] (5.7, -0.2)--(5.3, -0.2) to [bend left=50] (5.2, -0.1) --(5.2,1.6);
    
    \draw[thin, black] (5.1, 1.7) to [bend left=50] (5.2, 1.8)--(5.8, 1.8) to [bend left=50] (5.9,1.7)--(5.9, -1.7) to [bend left=50] (5.8, -1.8)--(5.2, -1.8) to [bend left=50] (5.1, -1.7) --(5.1,1.7);
 
 \draw[] (1.5,0)--(3.5,0);
   \draw[] (1.5,0)--(3.5,-1.5);
  \draw[] (1.5,1.5)--(3.5,1.5);
   \draw[] (1.5,1.5)--(3.5,-1.5);
   \draw[] (1.5,-1.5)--(3.5,0);
  \draw[] (1.5,-1.5)--(3.5,1.5);
  
   \draw[] (3.5,0)--(5.5,0);
  \draw[] (3.5,0)--(5.5,1.5);
 \draw[] (3.5,1.5)--(5.5,0);
  \draw[] (3.5,1.5)--(5.5,1.5);
  \draw[] (3.5,-1.5)--(5.5,-1.5);
   \draw[] (3.5,1.5)--(5.5,-1.5);
    \draw[] (3.5,0)--(5.5,-1.5);
    \draw[] (5.5,0)--(7,0);
  \draw[] (5.5,1.5)--(7,0);
   \draw[] (5.5,-1.5)--(7,0);

\node [] at (-.5,0) { $x_1$}; 
 \node [] at (1.5, .7) {$T_1$};
   \node [] at (1.5, -2.2) {$S_1$}; 
   
   \node [] at (7.5,0) { $x_2$}; 
 \node [] at (5.5, .7) {$T_2$};
   \node [] at (5.5, -2.2) {$S_2$};

\end{tikzpicture} 
\caption{\small A 2-link which cannot be extended to a 3-link which contains a 2-link maintaining the initial and terminal vertex sets of the original 2-link.}\label{fig:link}
\end{center}
\end{figure}
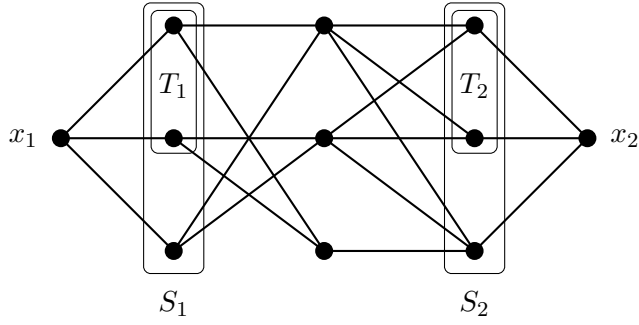

It turns out that Theorem~\ref{thm:strongPT} admits the following `self-refining' version.
Although we will not use it, we include it as it seems to be of some independent interest.

\begin{thm} 
Let $G$ be a graph.
Assume $S_1$ and $S_2$ are two  disjoint subsets of $V(G)$ such that $S_1$ and $S_2$ are $k$-linked.
If each $S_i$ has a subset $T_i$ of size $k-t$  for $i\in\{1,2\}$  such that $T_1$ and $T_2$ are $(k-t)$-linked,
then there is a subset $T_i'\subseteq S_i\backslash T_i$ of size $t$ for $i\in \{1,2\}$ such that $T_1\cup T_1'$ and $T_2\cup T_2'$ are $k$-linked.
\end{thm}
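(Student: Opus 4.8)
The plan is to induct on $t$, using Theorem~\ref{thm:strongPT} as the base case ($t=1$). For the inductive step, suppose the result holds for $t-1$ and we are given $T_i\subseteq S_i$ of size $k-t$ with $T_1,T_2$ being $(k-t)$-linked, together with the $k$-link between $S_1$ and $S_2$. The idea is to first enlarge the $(k-t)$-link into a $(k-t+1)$-link that still uses $T_1,T_2$, and then apply the induction hypothesis with the roles of the parameters shifted.

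Concretely, first I would apply Theorem~\ref{thm:strongPT} itself, but with $k$ replaced by $k-t+1$: we have that $S_1$ and $S_2$ are $k$-linked, hence in particular (by discarding paths) they are $(k-t+1)$-linked, and $T_1,T_2$ are $(k-t)=((k-t+1)-1)$-linked with $|T_i|=(k-t+1)-1$. So Theorem~\ref{thm:strongPT} gives vertices $u_i\in S_i\backslash T_i$ such that $T_1\cup\{u_1\}$ and $T_2\cup\{u_2\}$ are $(k-t+1)$-linked. Now set $T_i^{*}=T_i\cup\{u_i\}$, which has size $k-(t-1)$, and observe that $T_1^{*},T_2^{*}$ are $(k-(t-1))$-linked. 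The sets $S_1,S_2$ are still $k$-linked. Therefore the induction hypothesis (applied with parameter $t-1$ and with $T_i^{*}$ in place of $T_i$) yields a subset $R_i\subseteq S_i\backslash T_i^{*}$ of size $t-1$ such that $T_1^{*}\cup R_1$ and $T_2^{*}\cup R_2$ are $k$-linked. Taking $T_i'=R_i\cup\{u_i\}$, which is a subset of $S_i\backslash T_i$ of size $t$, we get that $T_1\cup T_1'=T_1^{*}\cup R_1$ and $T_2\cup T_2'=T_2^{*}\cup R_2$ are $k$-linked, completing the induction.

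The main thing to verify carefully — and where I expect the only real subtlety to lie — is the bookkeeping around disjointness and sizes: one must check that $u_i\notin T_i$ (guaranteed since Theorem~\ref{thm:strongPT} produces $u_i\in S_i\backslash T_i$), that $R_i$ is disjoint from $T_i^{*}$ (guaranteed by the induction hypothesis), and hence that $T_i'=R_i\cup\{u_i\}$ really has size exactly $t$ and lies in $S_i\backslash T_i$. Also one needs that the $(k-t+1)$-link and $(k-t)$-link hypotheses are genuinely weaker than the $k$-link hypothesis, which is immediate since a $k$-link contains a $j$-link for any $j\le k$ by deleting $k-j$ of its paths. With these checks in place the induction goes through cleanly, so I would not expect any genuinely hard step; the statement is essentially a telescoped iteration of Theorem~\ref{thm:strongPT}. (Alternatively, one could mimic the proof of Theorem~\ref{thm:strongPT} directly: add apex vertices $x_1,x_2$ joined to all of $S_1,S_2$, but now identify $S_i\backslash T_i$ to a set of $t$ vertices rather than a single vertex $y_i$ — more precisely, one would need a construction forcing $t$ of the $k$ Menger paths to pass through $S_i\backslash T_i$; however the inductive argument above is cleaner and avoids this.)
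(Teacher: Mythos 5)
Your proof is correct and is essentially the same induction on $t$ as the paper's own argument: apply Theorem~\ref{thm:strongPT} (with $k$ shifted to $k-t+1$) to enlarge the $(k-t)$-link by one vertex on each side, then invoke the inductive hypothesis with parameter $t-1$; your $u_i$, $T_i^{*}$, $R_i$ correspond exactly to the paper's $s_i$, $T_i\cup\{s_i\}$, $T_i''$.
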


\begin{proof} The proof is by induction on $t$.
If $t=1$, the result follows directly from Theorem~\ref{thm:strongPT}.
So suppose that $t\ge 2$ and that the result holds for all $t'< t$.
Let $T_i$ be a $(k-t)$-subset of $S_i$ such that $T_1$ and $T_2$ are $(k-t)$-linked. 
We need to show that there exists a subset $T_i'\subseteq S_i\backslash T_i$ of size $t$ such that $T_1\cup T_1'$ and $T_2\cup T_2'$ are $k$-linked.

Note that $S_1$ and $S_2$ are $k$-linked and hence also $(k-(t-1))$-linked. 
By Theorem~\ref{thm:strongPT}, there exist vertices $s_i\in S_i\backslash T_i$ for $i=1$ and 2 such that
$T_1\cup \{s_1\}$ and $T_2\cup \{s_2\}$ are $(k-(t-1))$-linked. 
By the induction hypothesis, there are subsets $T_i''\subseteq S_i\backslash (T_i\cup\{s_i\})$ for $i=1$ and 2 of size $t-1$
such that $(T_1\cup \{s_1\})\cup T_1''$ and $(T_2\cup\{s_2\})\cup T_2''$ are $k$-linked. 

Let $T_i'=T''\cup \{s_i\}$.
Then $|T'|=|T''|+1=t$ because $s_i\notin T''_i$. 
Hence $T_1'$ and $T_2'$ are the desired subsets of $S_1\backslash T_1$ and $S_2\backslash T_2$, respectively.  This completes the proof.   
 \end{proof}

\section{Technical Lemmas and Property $C(3,1)$ for Claw-free Graphs} 

Let $G$ be a graph and let $C$ be a cycle of $G$ where we arbitrarily adopt one direction for the traversal of $C$ and call it ``clockwise" and call the opposite direction ``counterclockwise". Suppose $x$ and $y$ are two vertices of $G$. Use $C[x,y]$ to denote the segment of $C$ from $x$ to $y$ in the clockwise direction, and $C^{-1}[x,y]$ to denote the segment of $C$ from $x$ to $y$ in counterclockwise direction. Furthermore, let $C(x,y]$ denote $C[x,y]-x$ and $C(x,y)$ denote $C[x,y]-\{x,y\}$.  For a connected subgraph $Q$ of $G$, the new graph obtained from $G$ by contracting all edges in $Q$ is denoted by $G/Q$.  

The strategy for proving our Theorem~\ref{thm:main} is: first assume that $G$ has a cycle $C$ which contains $k-1$ vertices from a $k$-vertex set $S$,
but avoids a $k^{\mbox{\footnotesize th}}$ vertex in $V(G)\backslash S$, and then apply Perfect's Theorem to the cycle $C$ and the vertex in $S$ which is not in $C$.
Sometimes, we may use Perfect's Theorem more than once in order to find enough paths to help form a suitable cycle containing all $k$
vertices of $S$, but avoiding the given vertex. 

We often encounter the following situation:  $C$ is a cycle, and $P_1$ and $P_2$ are two paths from $x$ and $y$ which end at the same vertex $u$ on $C$.
Then $u$ has degree at least 4, and has two neighbors $u_1$ and $u_2$ on $C$ and a neighbor $u_3$ on $P_1$ and $u_4$ on $P_2$.
Since $G$ is claw-free, either $u_1u_2\in E(G)$ or one of $u_1u_4$ and $u_2u_4$ is an edge of $E(G)$ (see Figure~\ref{fig:jumper}).
Then we use the following ``Jumper Operations": \medskip

(J1) If $u_3u_4\in E(G)$ (dashed edge  in Figure~\ref{fig:jumper}), then  $(P_1\cup P_2-\{u\})\cup \{u_3u_4\}$ is a path joining $x$ and $y$.

(J2) If $u_3u_4 \notin E(G)$, then $G$ must
contain an edge joining a vertex $u_i\in \{u_1,u_2\}$ and a vertex $u_j\in \{u_3,u_4\}$ (dotted edges in Figure~\ref{fig:jumper}).
Otherwise, $G$ has a claw.
If $j=3$, then replace $P_1$ by $P_1'=(P_1-\{u\})\cup \{u_3u_i\}$ which is a path from $x$ to $C$ ending at $u_i\ne u$.
If $j=4$, then replace $P_2$ by $P_2'=(P_2-\{u\})\cup \{u_4u_i\}$ which is a path from $y$ to $C$ ending at $u_i\ne u$.
Then for both $j=3$ 
and
$4$, the graph $G$ has disjoint paths from $x$ and $y$ to $C$ ending at different vertices.  

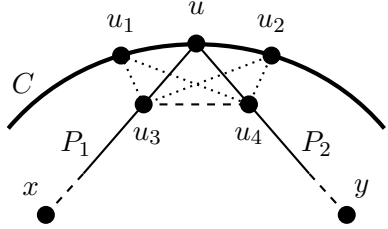
\begin{figure}[!htb] 
\begin{center}
\begin{tikzpicture}[thick, scale=1]
  \coordinate (O) at (0,0);
  \coordinate (A) at (5,0);

  \draw[dashed] (0.5,-1.15) --(1,-.6);

\draw[dashed] (4.5,-1.15)--(4,-.6);

\draw [] (1,-.6)--(2.5, 1.13);
  
  \draw[] (4,-.6)--(2.5, 1.13);
 
  \draw[ultra thick, black] (O) to [bend left=50] (A);
   \draw[thick, dashed] (1.8, .32) -- (3.2,.32);
   \draw[ dotted ] (1.5,.97)--(3.2, .32)--(3.5,.97);
      \draw[ dotted] (1.5,.97)--(1.8, .32)--(3.5,.97);
 
\filldraw [ ] (2.5,1.13) circle (3pt);
\filldraw [ ] (1.5,.97) circle (3pt);
 \filldraw [ ] (3.5,.97) circle (3pt);
 \filldraw [ ] (1.8,0.32) circle (3pt);
 \filldraw [ ] (3.2,0.32) circle (3pt);
\filldraw [] (.5, -1.15) circle (3pt);
\filldraw [] (4.5,-1.15) circle (3pt);

\node [] at (.2,.6) { $C$}; 
\node [] at (0.9,-.2) { $P_1$}; 
\node [] at (4.1,-.2) { $P_2$}; 
\node [] at (2.5,1.6) { $u$}; 

 \node [] at (1.5,1.4) { $u_1$}; \node [] at (3.5,1.4) { $u_2$}; 

\node [] at (1.85,-.1) { $u_3$}; \node [] at (3.2,-.1) { $u_4$}; 

\node [] at (.3, -.8) {$x$};
\node [] at (4.7, -.8) {$y$};

\end{tikzpicture}
\end{center}
\caption{\small Jumpers at the vertex $u$.}\label{fig:jumper}
\end{figure}

By (J1) and (J2), the above circumstance can always be converted to one of the following cases:  

(C1) a cycle $C$ and a path joining $x$ and $y$; or

(C2) a cycle $C$ and two disjoint paths from $x$ and $y$ to $C$ ending at different vertices. \\
So, in the proofs of this paper, we simply say ``by Jumper Operations, we assume that $P_2$ does not end at the endpoint of $P_1$ on $C$".

We now proceed to prove our main theorem by supposing that $G$ is a minimum counterexample with respect to the number of vertices. 
That is, let $G$ be a  
3-connected claw-free graph containing a set $S\subseteq V(G)$ consisting of vertices $x_1,x_2,\ldots,x_k$, where $k\le 5$, and let $z$ be a vertex not in $S$ such that
$G$ has no cycle which contains $S$, but misses $z$.
Moreover, let $G$ be a smallest such graph.\par

\begin{lemma}\label{lem:3-cut}
Let $v$ be a vertex of a minimum counterexample $G$ such that $v\notin S$.
Then $v$ is contained in a 3-cut of $G$.\par
\end{lemma}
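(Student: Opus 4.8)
The plan is to argue by contradiction using the minimality of $G$. Suppose some vertex $v \notin S$ is \emph{not} contained in any $3$-cut of $G$. Since $G$ is $3$-connected, this means $v$ is not a cut vertex in $G$ together with any two other vertices; intuitively, $v$ is ``locally removable'' in a strong sense. First I would look at $G - v$ and the neighborhood $N(v)$. Because $G$ is claw-free, the neighborhood $N(v)$ induces a graph whose complement is triangle-free, so $N(v)$ can be covered by two cliques. The natural move is to form a smaller graph $G'$ from $G$ by deleting $v$ and adding edges among the vertices of $N(v)$ so as to make $N(v)$ into (at most) two cliques, or alternatively to contract $v$ into a suitable neighbor. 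The goal is to produce a $3$-connected claw-free graph $G'$ with fewer vertices, still containing $S$ and $z$, so that by minimality $G'$ has a cycle $C'$ through $S$ avoiding $z$, and then lift $C'$ back to the desired cycle in $G$.

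The key steps, in order, are: (1) record that $N(v)$ is the union of two cliques $A$ and $B$ (possibly overlapping or one empty) since $G$ is claw-free; (2) define $G' = (G - v) + E_A + E_B$ where $E_A, E_B$ are the edges needed to make $A$ and $B$ complete; since $v \notin S \cup \{z\}$, both $S$ and $z$ survive in $G'$ and $|V(G')| = |V(G)| - 1$; (3) verify $G'$ is claw-free: a new claw in $G'$ would have to use one of the added edges, and one checks that any induced $K_{1,3}$ centered at some vertex would, after re-inserting $v$, produce an induced $K_{1,3}$ in $G$ (here one uses that $A$ and $B$ were cliques and $v$ was adjacent to all of $A \cup B$); (4) verify $G'$ is $3$-connected: a $2$-cut $\{a,b\}$ of $G'$ would, because the only structural change was adding edges inside cliques $A$ and $B$, correspond to a small cut of $G$; the hypothesis that $v$ lies in no $3$-cut of $G$ is exactly what rules out $\{a,b,v\}$ or $\{a,b\}$ being a cut of $G$ after accounting for $v$'s edges — this is where the assumption gets used; (5) apply minimality of $G$ to get a cycle $C'$ in $G'$ through $S$ avoiding $z$; (6) lift $C'$ to $G$: if $C'$ uses no added edge, it is already a cycle of $G$ and we are done; if $C'$ uses an added edge $a_1a_2$ inside $A$ (or $B$), replace the path $a_1a_2$ on $C'$ by $a_1 v a_2$, using that $v \in N(v)$'s clique means $va_1, va_2 \in E(G)$ — since $v$ is used at most twice this way but a cycle visits $v$ only once, one must argue $C'$ uses at most one added edge, or more carefully, that at most one added edge from each clique is used and $v$ can serve only one of them, so a finer choice of $G'$ (e.g. contracting along an edge rather than adding a clique) may be cleaner.

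The main obstacle I anticipate is step (6), the lifting, coupled with making step (4) go through simultaneously: a cycle in $G'$ could in principle use two different added edges (one from $A$, one from $B$, or two from the same clique), and $v$ can only be reinserted into one spot on the cycle. The standard fix is to be more economical in constructing $G'$ — rather than completing $N(v)$ into two cliques globally, contract the edge $vw$ for a well-chosen neighbor $w$ (so that $v$ and $w$ merge), which automatically keeps the cycle structure: a cycle through the merged vertex in $G'$ either avoids it or passes through it, and in the latter case splits back into a path through $v$ and $w$ in $G$. One then has to check that such a $w$ can be chosen so that $G/vw$ remains claw-free and $3$-connected, again invoking ``$v$ is in no $3$-cut'' for the connectivity and claw-freeness of $G/vw$. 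A secondary subtlety is the degenerate cases — $v$ of degree $3$ exactly (then $G$ is essentially forced to have a $3$-cut around $v$ unless $N(v)$ induces a triangle, handled directly) — and ensuring $G'$ does not become too small or lose $3$-connectivity in these boundary situations; I expect the authors handle the low-degree case by an explicit local argument rather than the contraction.
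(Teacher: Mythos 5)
The central observation you missed makes the whole problem collapse: because $v \notin S$, the cycle we seek is \emph{not} required to pass through $v$, so there is no lifting step at all. One simply deletes $v$. The graph $G-v$ is an induced subgraph of $G$ and hence automatically claw-free --- no edge additions, no clique completions, no contraction, and no verification along the lines of your step~(3) is needed. For $3$-connectivity the hypothesis that $v$ lies in no $3$-cut gives it at once: if $\{a,b\}$ were a $2$-cut of $G-v$, then $\{a,b,v\}$ would be a $3$-cut of $G$. Since $G-v$ is a strictly smaller $3$-connected claw-free graph, minimality yields a cycle in $G-v$ through $S$ and avoiding $z$, and that very cycle is already a cycle in $G$ with the desired properties. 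That is the entire proof in the paper. Your steps~(2), (3) and~(6), and the ``main obstacle'' you flag, are artifacts of reaching for edge-completion or edge-contraction (the kind of reduction one uses when the deleted vertex must survive on the cycle, as in Hamiltonicity arguments) when plain deletion suffices here.

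On its own terms, your plan also has a genuine gap at step~(6), which you correctly identify but do not resolve: a cycle in your $G'$ may use more than one added edge, and $v$ can be reinserted at only one place on a cycle, so the lift does not close. Your fallback of contracting $vw$ for a well-chosen $w$ is not carried out and would require its own (nontrivial) claw-freeness and connectivity arguments for $G/vw$. The one piece you got right is using ``$v$ lies in no $3$-cut'' to control connectivity after removing $v$ --- but applied directly to $G-v$ rather than to an edge-augmented or contracted graph, that observation is a single line, and the rest of your machinery disappears.
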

\begin{proof}
Since $G$ is claw-free, the subgraph $G-v$ is also claw-free.
If $v$ is not contained in a 3-cut, then $G-v$ is 3-connected and claw-free.
Since $G$ is a minimum counterexample, $G-v$ has a cycle $C$ passing through all vertices of $S$, but avoiding $z$.
But then, the cycle $C$ is a cycle in $G$ of the type we desire, thus contradicting the assumption that $G$ is a counterexample. 
\end{proof}

\begin{lemma}\label{lem:2-component}
Let $G$ be a 3-connected claw-free graph, and $T$ be a 3-cut of $G$. Then $G-T$ has exactly two components $Q_1$ and $Q_2$ such that
each $Q_i$ does not contain a cutvertex.
\end{lemma}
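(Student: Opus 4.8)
The plan is to exploit the fact that $G$ is claw-free and $3$-connected to control the structure of $G-T$. First I would show that $G-T$ has at most two components. Suppose $G-T$ has three or more components $Q_1, Q_2, Q_3, \ldots$; write $T=\{a,b,c\}$. Since $G$ is $3$-connected, each vertex of $T$ has a neighbor in every component (otherwise deleting the other two vertices of $T$ disconnects $G$, since at least two components would then each be cut off). In particular $a$ has neighbors $q_1\in Q_1$, $q_2\in Q_2$, $q_3\in Q_3$ with $q_iq_j\notin E(G)$ for $i\neq j$ (they lie in different components of $G-T$, and none of them is in $T$). Then $\{a;q_1,q_2,q_3\}$ induces a claw, a contradiction. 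Hence $G-T$ has exactly two components, since it clearly has at least two (a $3$-cut must disconnect the graph, and $G$ is connected).

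\medskip

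Next I would argue that neither component contains a cutvertex. Let $Q$ be a component of $G-T$ (so $G-T=Q_1\cup Q_2$ with the other component $Q'$), and suppose $Q$ has a cutvertex $w$; write $Q-w=R_1\cup R_2\cup\cdots$ as its components. The key observation is that, since $G$ is $3$-connected, $\{w\}\cup T$ cannot be a separator of size $\le 3$ unless $Q=Q-w$ is connected; more precisely $\{w\}$ together with at most two of the vertices of $T$ must fail to separate, which forces each $R_j$ to send an edge to every vertex of $T$ except possibly restricted ones — I'd make this precise by a counting/Menger argument: every $R_j$ must be joined to at least two vertices of $T\cup\{w\}$ (else $G$ has a cut of size at most $2$). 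So each $R_j$ attaches to at least one vertex of $T$. Now pick $a\in T$ adjacent into two distinct components $R_j$ and $R_{j'}$ of $Q-w$ — if some vertex $a\in T$ has neighbors in two components $R_j,R_{j'}$, then since $w$ may or may not be adjacent to $a$, I again want a claw: $a$ has a neighbor $r_j\in R_j$, a neighbor $r_{j'}\in R_{j'}$, and a neighbor $q'\in Q'$ (from $3$-connectivity applied across $T$). These three are pairwise non-adjacent (they lie in three different components of $G-T$ or of $G-(T\cup\{w\})$ appropriately), giving a claw at $a$. The remaining case is that every vertex of $T$ has neighbors in only one component of $Q-w$; then since there are $\ge 2$ components $R_j$ and only $3$ vertices in $T$, some component $R_j$ receives edges from $T$ only through $w$... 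I'd show this makes $\{w\}\cup(\text{the }\le 2\text{ vertices of }T\text{ adjacent to }R_j)$ (or just $\{w\}$) a small cut, contradicting $3$-connectivity.

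\medskip

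So the real argument is a case analysis on how the vertices of $T$ distribute their neighbors among the components of $Q-w$, using claw-freeness to kill the case where some $t\in T$ spreads across two components, and using $3$-connectivity (via Menger) to kill the case where the components of $Q-w$ are too poorly attached to $T$. I expect the main obstacle to be bookkeeping in this second part: one must carefully track, for a cutvertex $w$ of one component $Q$, which vertices of the $3$-cut $T$ see which side of $w$, and ensure that in every configuration either a claw appears (three pairwise-nonadjacent neighbors of one vertex) or a cut of size at most $2$ appears in $G$. The existence of a neighbor of each $t\in T$ inside the \emph{other} component $Q'$ is what repeatedly supplies the third leg of the claw, so it is crucial to invoke $3$-connectivity to guarantee that each vertex of $T$ has a neighbor in $Q'$ as well. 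Once the case analysis is organized, each case closes in one or two lines.
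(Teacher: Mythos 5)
Your overall route is the paper's. The first half (at most two components, via a claw centered at a vertex of $T$) is identical. The second half has a gap at the close of your Case~2. You claim that some $R_j$ ``receives edges from $T$ only through $w$'' and then nominate $\{w\}$ together with ``the $\le 2$ vertices of $T$ adjacent to $R_j$'' as a small cut contradicting $3$-connectivity; but a separator of size three does not contradict $3$-connectivity, and the quoted claim is not in fact forced (with two components and a $2{+}1$ distribution of $T$, both $R_j$'s get direct edges from $T$). The correct pigeonhole is sharper: once each $t\in T$ is adjacent in $Q-w$ to at most one $R_j$ and there are at least two components, some $R_j$ is adjacent to at most $\lfloor 3/2\rfloor = 1$ vertex of $T$, so $\{w\}$ plus at most one further vertex separates $R_j$, a cut of size at most two. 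That is what yields the contradiction, and you should state it at that strength.

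You also skip an observation the paper uses to avoid the arbitrary-$k$ bookkeeping: claw-freeness applied at $w$ itself forces $Q-w$ to have exactly two components, since three neighbors of $w$ in three different components of $Q-w$ are pairwise non-adjacent and give a claw at $w$. With exactly two sides $B_1,B_2$, the paper partitions $T$ into $U_1,U_2$ according to which side each $t\in T$ has a neighbor on (claw-freeness at $t$, with its neighbor in the other component $Q'$ supplying the third leg, forces the $Q$-neighbors of $t$ to be a clique and hence lie on one side), and then $\min(|U_1|,|U_2|)\le 1$ hands over the $\le 2$-cut directly. Your Case~1 claw at a vertex of $T$ is the same idea as the paper's clique observation; fold in the claw at $w$ and tighten the pigeonhole and your argument matches the paper's.
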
 
\begin{proof}First, we prove that $G-T$ has precisely two components. 
Assume to the contrary that $G-T$ has at least three components, say $Q_1, Q_2$ and $Q_3$.
Since $G$ is 3-connected, every vertex of $T$ has a neighbor in each of these components.
So this vertex of $T$, together with these neighbors, induce a claw, contradicting the assumption that $G$ is claw-free.
So $G-T$ has exactly two connected components $Q_1$ and $Q_2$ as claimed.  

In the following, we show that each $Q_i$  does not contain a cutvertex.
Assume, to the contrary, that $v$ is a cutvertex of $Q_i$.
Then $Q_i$ has exactly two blocks $B_1$ and $B_2$ (i.e., maximal 2-connected subgraphs) separated by $v$ since $G$ is claw-free.
Note that, for any vertex $v_j\in T$, the vertex $v_j$ has neighbors in both $Q_1$ and $Q_2$ since $G$ is 3-connected. It follows that
the neighbors of $v_j$ in $Q_i$ induce a clique because $G$ is claw-free. So all neighbors of $v_j$ in $Q_i$ belong to the same block
of $Q_i$. Let 
\[ U_t=\{v_j | v_j\in T \mbox{ and }\big (V(B_t)\cap (N(v_j)\big )\backslash v)\ne \emptyset \} \mbox{ for } t\in \{1,2\},\] where
$N(v_j)$ is the set of all neighbors of $v_j$ in $G$. Then $U_1\cap U_2=\emptyset$ and $|U_1|+|U_2|=|T|=3$.  
Without loss of generality, assume  $ |U_1|\ge |U_2|$.  
Then $U_2\cup \{v\}$ is a vertex cut of $G$ separating $B_2$ from the remaining subgraph of $G$ and has size at most two, which contradicts the assumption that $G$ is 3-connected. 
This completes the proof.
\end{proof}

\begin{lemma}\label{lem:z-3-cut}
Let $T=\{z, w_1,w_2\}$ be a 3-cut of a minimum counterexample $G$. Then $G-T$ has a component which is a singleton vertex from $S$. 
\end{lemma}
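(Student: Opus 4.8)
\textbf{Proof proposal for Lemma~\ref{lem:z-3-cut}.}

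The plan is to use Lemma~\ref{lem:2-component} to reduce to a controlled structural situation and then perform a contraction to reach a smaller graph to which the minimality of $G$ applies. First I would invoke Lemma~\ref{lem:2-component} for the $3$-cut $T=\{z,w_1,w_2\}$ to conclude that $G-T$ has exactly two components $Q_1$ and $Q_2$, neither containing a cutvertex (so each $Q_i$ is $2$-connected or a single vertex or a single edge). Since $S$ has at most five vertices and $z\notin S$, the set $S$ is distributed among $T\setminus\{z\}=\{w_1,w_2\}$, $Q_1$ and $Q_2$. The goal is to show that one of $Q_1,Q_2$ consists of a single vertex lying in $S$.

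The key step is a contraction argument. Suppose some $Q_i$, say $Q_1$, is not a singleton from $S$. Then I would like to contract $Q_1$ (or an appropriately chosen connected subgraph spanning $Q_1$ together with possibly a controlled portion of its attachment) to a single vertex, forming a graph $G'$ on fewer vertices. One must check three things: (a) $G'$ is still $3$-connected — here the fact that $Q_1$ has no cutvertex and that each $w_i$ and $z$ sends edges into $Q_1$ (by $3$-connectivity of $G$) should keep the contracted vertex of degree at least $3$ and preserve local connectivity; (b) $G'$ is still claw-free — contracting a connected subgraph can create claws in general, so the contracted vertex's neighborhood in $T$, namely a subset of $\{z,w_1,w_2\}$, must induce no independent set of size $3$ together with the new vertex; since $|T|=3$ this is automatic unless $z,w_1,w_2$ are pairwise nonadjacent and all adjacent to the new vertex, a case that must be handled separately (e.g.\ by instead contracting $Q_1$ to an edge, or by noting $G$ itself would then contain a near-claw forcing extra edges); (c) the image $S'$ of $S$ in $G'$ still has size at most five and $z$ remains outside $S'$ — this needs $Q_1$ to contain \emph{at most one} vertex of $S$, which is where we use that if $Q_1$ contained two or more vertices of $S$ we argue differently, perhaps routing a path through $Q_1$ directly. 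With $G'$ smaller, $3$-connected and claw-free, minimality of $G$ gives a cycle $C'$ through $S'$ avoiding $z$ in $G'$; then I would lift $C'$ back to a cycle $C$ in $G$ through $S$ avoiding $z$ by replacing the contracted vertex (if used by $C'$) with a path through $Q_1$ between the two relevant neighbors in $T$ — such a path exists because $Q_1$ is $2$-connected, and it uses no vertex of $T$, in particular avoids $z$. This contradicts $G$ being a counterexample, so each $Q_i$ must be a singleton; a final short argument (again via minimality applied to $G$ with the singleton identified away, or a direct cycle construction) forces that singleton to lie in $S$, since if $Q_i=\{q\}$ with $q\notin S$ then $q$ together with its $\le 3$ neighbors would allow us to delete or reroute around $q$.

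The main obstacle I anticipate is controlling the claw-freeness and $3$-connectivity after contraction, together with the possibility that $Q_i$ carries more than one vertex of $S$. The cleanest route is probably to handle the multi-$S$-vertex case by observing that $Q_i$ being $2$-connected lets us find a path in $Q_i$ through all of $S\cap V(Q_i)$ between two prescribed neighbors of $w_1,w_2$ (using cyclability/linking results, or Menger-type arguments inside the $2$-connected $Q_i$ after adding the apex vertices $w_1,w_2$), and then combine it with a path in the other side through the rest of $S$, avoiding $z$ by routing through $w_1,w_2$ only; if that combined cycle exists we contradict the counterexample assumption directly, and if it does not, the obstruction pins down $|S\cap V(Q_i)|\le 1$ and $Q_i$ small, feeding back into the contraction argument. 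I would also keep the Jumper Operations in reserve for fixing up the endpoints where the two partial paths meet vertices of $T$, since $T$-vertices incident to both sides tend to have degree $\ge 4$ and claw-freeness then yields the needed local edges.
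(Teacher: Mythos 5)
Your outline correctly identifies the scaffolding (invoke Lemma~\ref{lem:2-component}, then reduce via contraction and minimality), but there is a genuine gap at the crux: your plan contracts only one component $Q_1$ and then lifts the resulting cycle $C'$ back to $G$ by replacing the contracted vertex with a path through $Q_1$. That lifted path will not in general visit the vertices of $S\cap V(Q_1)$, which is exactly why you flag case~(c) and say one should ``argue differently'' when $Q_1$ carries more than one $S$-vertex. You then propose to find, by hand, a $w_1$--$w_2$ path in $Q_1$ through all of $S\cap V(Q_1)$ using $2$-connectivity plus apexes, but $2$-connectivity gives you a path through two prescribed vertices, not through up to five; building such a path unconditionally is essentially the cyclability problem the whole paper is about, so this is not a fix but a restatement of the difficulty. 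The paper's proof resolves this with a symmetric \emph{double} contraction you are missing: it forms $G_i=(G/Q_i)\cup\{zw_1,zw_2\}$ for \emph{each} $i\in\{1,2\}$, applies minimality of $G$ to each $G_i$ with $S_i=(S\setminus V(Q_i))\cup\{q_i\}$ (note $|S_i|\le|S|$ regardless of how many $S$-vertices sit in $Q_i$), and then observes that $C_i-q_i$ is a $w_1$--$w_2$ path inside $\{w_1,w_2\}\cup V(Q_j)$ ($j\ne i$) avoiding $z$ and covering $S\cap V(Q_j)$; gluing $(C_1-q_1)\cup(C_2-q_2)$ produces the desired cycle. This sidesteps the multi-$S$-vertex issue entirely.

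Two smaller points. First, your worry about claw-freeness after contraction is legitimate but your proposed remedy (``contract to an edge'' or appeal to ``near-claws'') is vague; the paper simply adds the edges $zw_1$ and $zw_2$ so that $T$ becomes a triangle, after which $q_i$'s neighborhood $T$ is a clique and no new claw can arise (adding edges never creates a claw, and contracting a clique of $v$'s $Q_i$-neighbors to $q_i$ cannot create a claw at $v\in T$ either). Second, your final step ``forces that singleton to lie in $S$'' needs its own argument; the paper handles the case $Q_i\cap S=\emptyset$ first and separately, by deleting $Q_i$, replacing it with the triangle $zw_1w_2$, applying minimality to get a cycle, and then, if that cycle uses the artificial edge $w_1w_2$, replacing that edge with a $w_1$--$w_2$ path through $Q_i$. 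That clean case split (first rule out $Q_i\cap S=\emptyset$, then handle both components non-singleton by double contraction, then conclude) is what your sketch would need to become a proof.
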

\begin{proof}
By Lemma~\ref{lem:2-component}, $G-T$ has two components $Q_1$ and $Q_2$.
If $Q_i\cap S=\emptyset$, let $G'=(G-V(Q_1))\cup\{zw_1w_2z\}$ where $zw_1w_2z$ is a 3-cycle. 
Then $G'$ is claw-free.
By Lemma~\ref{lem:2-component}, $G'$ is 3-connected.
So $G'$ has a cycle $C$ passing through all the vertices in $S$, but avoiding $z$.
If $C$ contains $w_1w_2$ and $w_1w_2$ is not an edge of $G$, use a path $P$ of $G-V(Q_j)$ ($j\ne i$) joining $w_1$ and $w_2$ to replace the edge
$w_1w_2$ in $C$ to get a cycle $C'$ in $G$ of the type desired.
Therefore, both $Q_1\cap S\ne \emptyset$ and $Q_2\cap S\ne \emptyset$ hold. 

If $|V(Q_i)|\ge 2$ for both $i=1$ and 2, consider $G_i=(G/Q_i)\cup \{zw_2, zw_1\}$.
Note that $G_i$ is 3-connected and claw-free.
Since $G$ is a minimum counterexample, both $G_1$ and $G_2$ are not counterexamples.
Let $q_i$ be the vertex of $G_i$ obtained by contracting $Q_i$.
Let $S_i=(S\backslash V(Q_i)) \cup \{q_i\}$.
Then $|S_i|\le |S|$, and hence $G_i$ has a cycle $C_i$ containing all vertices of $S_i$, but avoiding $z$.
Then $(C_1-q_1)\cup (C_2-q_2)$ is a cycle of $G$ containing all vertices of $S$, but avoiding $z$, and this contradicts
the assumption that $G$ is a counterexample.
Therefore, one of the two components is a singleton vertex from $S$. 
\end{proof}


\begin{thm}  
Let $G$ be a 3-connected claw-free graph. Then $G$ has the property $C(3,1)$.
\end{thm}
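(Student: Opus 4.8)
The plan is to show that a $3$-connected claw-free graph $G$ is $C(3,1)$ by contradiction, using the minimum counterexample machinery already set up in Lemmas~\ref{lem:3-cut}--\ref{lem:z-3-cut}. So let $G$ be a smallest $3$-connected claw-free graph with $S=\{x_1,x_2,x_3\}$ and $z\notin S$ such that no cycle of $G$ contains $S$ and avoids $z$. Since $G$ is $3$-connected, it is $C(2,1)$, so there is a cycle $C_0$ through any two of the $x_i$, say $x_1,x_2$, avoiding $z$. The strategy outlined in Section~3 applies: I would consider the cycle $C$ realizing the largest possible portion of $S$ together with a path structure attaching $x_3$, and try to reroute $C$ through $x_3$.

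First I would set up the extremal cycle. By $C(2,1)$ there is a cycle avoiding $z$ through $x_1,x_2$; if some such cycle also passes through $x_3$ we are done, so assume not. Choose a cycle $C$ avoiding $z$ with $\{x_1,x_2\}\subseteq V(C)$ and, subject to that, with $|V(C)\cap S|$ as large as possible (so $x_3\notin V(C)$) and then $C$ as ``large'' as convenient. Now apply Perfect's Theorem (Theorem~\ref{thm:PT}) in the graph $G-z$: the vertex $x_3$ and the set $V(C)$ — here one uses that $x_3$ is connected to $C$ in $G-z$, which follows because $G$ is $3$-connected so $x_3$ has three openly disjoint paths to $C\cup\{z\}$, at least two avoiding $z$, so $x_3$ and $V(C)$ are at least $2$-linked in $G-z$. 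Take a maximal $t$-link from $x_3$ to $V(C)$ in $G-z$; since $C$ itself gives a $1$-link, Perfect's Theorem lets us grow it, so $t\ge 2$, with $x_3$ joined to two distinct vertices $a,b\in V(C)$ by internally disjoint paths $P_a,P_b$ internally disjoint from $C$ and avoiding $z$. If $t\ge 3$ one gets even more attachment points. The idea is then to splice: $C$ is split by $a,b$ into two arcs $C[a,b]$ and $C^{-1}[a,b]$, and $C[a,b]\cup P_a\cup P_b$ (going out to $x_3$ and back) is a cycle through $x_3$; the obstruction is that the discarded arc $C(a,b)$ or $C^{-1}(a,b)$ may contain $x_1$ or $x_2$. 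This is where I expect the real work to lie.

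To handle the splicing obstruction I would use the structural lemmas on $3$-cuts. Lemma~\ref{lem:3-cut} says every vertex of $G$ not in $S$ — in particular $z$ and every internal vertex of $C$ not in $S$ — lies in a $3$-cut; Lemma~\ref{lem:z-3-cut} says any $3$-cut through $z$ has one side a single vertex of $S$. This severely constrains how $z$ and the $x_i$ sit relative to $C$. Concretely, I would argue: the vertex $z$ lies in some $3$-cut $T=\{z,w_1,w_2\}$, and by Lemma~\ref{lem:z-3-cut} one component of $G-T$ is $\{x_i\}$ for some $i$; combining with claw-freeness (so $w_1,w_2$ have cliques as neighborhoods in each side, as in the proof of Lemma~\ref{lem:2-component}) one shows $z$ is essentially ``pendant'' off a small piece, and then any cycle through $S$ avoiding $z$ can be obtained by detouring $w_1w_2$ through the $x_i$-side. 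The Jumper Operations (J1), (J2) are then used to convert the configuration ``cycle $C$ plus two disjoint paths $P_a,P_b$ from $x_3$ ending at distinct vertices of $C$'' into an honest cycle through $x_3$, $x_1$, $x_2$: whenever both attachment arcs are ``blocked'' by an $x_j$, claw-freeness at an attachment vertex of degree $\ge 4$ supplies a jumper edge that lets us absorb the extra vertex. A short case analysis on how many of $x_1,x_2$ lie in each arc $C(a,b)$, $C^{-1}(a,b)$ — using $t\ge 2$, and pushing to $t\ge 3$ via Perfect's Theorem when a case needs a third attachment point — should close all cases.

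The main obstacle, as I see it, is the bookkeeping when both $x_1$ and $x_2$ end up ``trapped'' on arcs that the natural rerouting would delete: there one genuinely needs a third path from $x_3$ (hence $t\ge 3$, justified by applying Perfect's Theorem to the $2$-link together with $V(C)$, which is $3$-linked to $x_3$ in $G$ because $G$ is $3$-connected — but one must be careful that the third path avoids $z$, which is exactly where Lemma~\ref{lem:z-3-cut} and the $3$-cut structure force $z$ into a harmless location), or one needs a jumper to merge an arc into the new cycle. Making the interaction between the Perfect's-Theorem attachment structure, the $3$-cut lemmas localizing $z$, and the Jumper Operations fully rigorous — rather than merely plausible in each individual case — is the crux. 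I would organize the final proof as: (i) reduce to $G-z$ being $2$-connected-but-not-$3$-connected near $z$ via Lemmas~\ref{lem:3-cut}--\ref{lem:z-3-cut}; (ii) extract the extremal cycle $C$ and a $2$-link (upgraded to a $3$-link when needed) from $x_3$ to $C$ in $G-z$ via Perfect's Theorem; (iii) finish by Jumper Operations and a case check on the positions of $x_1,x_2$ on the arcs of $C$.
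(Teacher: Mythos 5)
Your high-level strategy (minimum counterexample, build a cycle through two of the $x_i$ avoiding $z$, attach $x_3$ via Perfect/Menger paths, close cases with Jumper Operations) matches the spirit of the paper, but the step you yourself flag as the crux is a genuine gap, and the paper resolves it by a mechanism your plan does not anticipate.

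Specifically: you want a $3$-link from $x_3$ to $V(C)$ \emph{in $G-z$}, justified by ``$V(C)$ is $3$-linked to $x_3$ in $G$ because $G$ is $3$-connected --- but one must be careful that the third path avoids $z$, which is exactly where Lemma~\ref{lem:z-3-cut} and the $3$-cut structure force $z$ into a harmless location.'' That last clause is a hope, not an argument, and in general it is false that $x_3$ and $V(C)$ are $3$-linked in $G-z$: the graph $G-z$ is only guaranteed $2$-connected, and the Menger paths from $x_3$ to $C$ in $G$ may unavoidably pass through $z$ (the paper isolates exactly this as property $(*)$). Nothing in Lemmas~\ref{lem:3-cut}--\ref{lem:z-3-cut} alone gives you a third $z$-avoiding path from $x_3$ to $C$; the lemma that does (\ref{lem:avoid-z-path}) requires knowing $z$ lies in a \emph{unique} $3$-cut, which the paper establishes only for the later cases $C(4,1)$ and $C(5,1)$ and does not use for $C(3,1)$.

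What the paper actually does at this point is different and more delicate than anything in your outline. It builds $C$ very concretely: $T=\{z,v_1,v_2\}$ is a $3$-cut isolating $x_1$, and $C$ is assembled from two of the three Menger paths from $x_2$ to $T$ (the ones ending at $v_1,v_2$) together with $v_1 x_1 v_2$; crucially, the \emph{third} Menger path $P'$ from $x_2$ to $z$ is retained. When the three paths $P_1,P_2,P_3$ from $x_3$ to $C$ necessarily include one through $z$ (say $P_3\supseteq P''$ ends at $z$), the paper reroutes: it follows $P''$ from $x_3$ to the first vertex $w$ of $P''\cap P'$, then switches onto $P'$ back to $x_2$, handling $w=z$ by a claw-freeness jumper at $z$. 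This produces a $z$-avoiding $x_3$--$x_2$ path $P_3'$, and a second application of $(*)$ forces $P_1\cup P_2$ to meet $P'$, yielding the final reroute and the contradiction. Your sketch has no analogue of retaining $P'$ and splicing along it; without that (or a proved substitute for the missing $3$-link in $G-z$), the case where every third attachment path from $x_3$ must use $z$ remains open, so the proposal as written does not close the argument.
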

\begin{proof} The proof is by contradiction.
So suppose $G$ is a minimum counterexample; i.e., suppose there exist three vertices $x_1,x_2,x_3$ and a fourth vertex $z$ in $G$ such that there is no cycle in $G$ containing $S=\{x_1,x_2,x_3\}$ which avoids $z$. 

By Lemmas~\ref{lem:3-cut} and \ref{lem:z-3-cut}, $z$ belongs to a 3-cut $T=\{z,v_1,v_2\}$ such that $G-T$ consists of precisely two components, one of which is a singleton from $S$, say $x_1$.
Since $G-z$ is 2-connected, $x_1$ is contained in a cycle which must contain $v_1$ and $v_2$.
If $\{v_1,v_2\}=\{x_2,x_3\}$, then any cycle of $G-z$ containing $x_1$ is a cycle of the type sought, contradicting the assumption
that $G$ is a counterexample.
So assume that $x_2\notin \{v_1,v_2\}$.  

By Menger's Theorem, there are three internally disjoint paths from $x_2$ to three distinct vertices of $T$, two of which end at $v_1$ and $v_2$ respectively.
These two paths, together with $v_2x_1v_1$, form a cycle $C$ containing both $x_1$ and $x_2$, but not $z$.
Denote the third path from $x_2$ to $z$ by $P'$.
If $x_3\in V(C)$, then $C$ is a cycle of the type desired, a contradiction again.
So in the following, assume that $x_3\notin  V(C)$.
Then by Menger's Theorem, there are three internally disjoint paths $P_1, P_2$ and $P_3$ from $x_3$ to three distinct vertices of $V(C)$. 

If none of the paths $P_1, P_2$ and $P_3$ contains $z$, then all three end in the segment $C[v_1,v_2]$.
Then one of the segments $C[v_1,x_2]$ and $C[x_2,v_2]$ contains two of the endvertices of $P_1,P_2$ and $P_3$.
Without loss of generality, assume $C[v_1,x_2]$ contains the endvertices of $P_1$ and $P_2$, denoted by $u_1$ and $u_2$, which appear in clockwise order on $C$, respectively. Then $C[u_2,u_1]$ contains both $x_1$ and $x_2$. Hence $C'=P_1\cup P_2\cup C[u_2,u_1]$ is a cycle containing all vertices of $S$, but not $z$, again a contradiction. 
 So, in the following, suppose that 
\begin{description}
\item [ ] ($*$) {\sl among any three internally disjoint paths $P_1, P_2$ and $P_3$  from $x_3$ to three distinct vertices of $C$, there is always one containing $z$.}
\end{description}
Without loss of generality, assume that $P_3$ contains $z$. Let $P''$ be the subpath of $P_3$ joining $x_3$ and $z$. Let $w$ be the first vertex in $P''\cap P'$ encountered when traversing $P''$ from $x_3$ to $z$.

If $w\ne z$, let $P_3'$ be the path obtained by traversing $P''$ from $x_3$ to $w$ and then traversing $P'$ from $w$ to $x_2$. 
If $w=z$, then there is an edge $e$ joining the vertex of $P''$ in  $N(z)\backslash (T\cup x_1)$ and the vertex of $P'$ in $N(z)\backslash (T\cup x_1)$ since $G$ is claw-free and hence the  neighbors of $z$ induce a clique in $G-(T\cup \{x_1\})$.
Then let $P_3'=\big ( (P''\cup P')-z\big )\cup e$. So, no matter whether $w=z$ or $w\ne z$, $P_3'$ is a path from $x_3$ to $x_2$ which avoids $z$.  
By Jumper Operations, we may assume that $P_1, P_2$ and $P_3'$ end at different vertices of $C$.  

By ($*$),  it follows that $(P_1\cup P_2)\cap P_3' \ne \emptyset$. Since $P_1$ and $P_2$ are internally disjoint from $P_3$ and $P_3$ contains $P''$ as a subpath, it follows that $(P_1\cup P_2)\cap P'\ne \emptyset$. Let $w''$ be the first vertex in $(P_1\cup P_2)\cap P'$ encountered when traversing $P'$ from $x_2$ to $z$. Without loss of generality, assume $w''\in P_1$. Let $P_1'$ be the path from $x_3$ to $C$ obtained by traversing  $P_1$ from $x_3$ to $w''$ and then along $P'$ to $x_2$. Because of the choice of $w''$,  $P_1'$ and $P_2$ are two internally disjoint paths from $x_3$ to $C$ ending at two distinct vertices in the same closed segment of $C$
determined by $x_1$ and $x_2$. Then $x_3$ can be inserted into $C$ by using $P_1'$ and $P_2$ to generate a new cycle $C'$ containing all the vertices of $S$, but not $z$, where $C'=C[x_1,x_2]\cup P_1'\cup P_2\cup C[u_2,x_1]$, and this again contradicts $G$ being a counterexample.
This completes the proof.  
\end{proof}

In the rest of this section, we derive more properties of 3-cuts of a minimum counterexamples to $C(k,1)$ with $k\le 5$.

\begin{lemma}\label{lem:3cuts-intersection}
Let $G$ be a minimum counterexample to $C(k,1)$ with $k\le 5$ and let $T$ and $T'$ be two distinct 3-cuts of $G$ such that $|T\cap T'|= 2$.
Then  $G-(T\cup T')$ has two components. Furthermore, if $z\in T\cap T'$, then $G-z$ has a Hamiltonian cycle. 
\end{lemma}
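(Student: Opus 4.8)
Let $G$ be a minimum counterexample to $C(k,1)$ with $k\le 5$, and let $T=\{a,b,c\}$ and $T'=\{a,b,c'\}$ be two distinct $3$-cuts sharing exactly the two vertices $a,b$ (so $c\ne c'$). By Lemma~\ref{lem:2-component}, $G-T$ has exactly two components $Q_1,Q_2$, and $G-T'$ has exactly two components $Q_1',Q_2'$, none containing a cutvertex. I would first locate $c'$ and $c$ relative to these components: since $c'\notin T$, $c'$ lies in some $Q_i$, say $c'\in Q_1$, and symmetrically $c$ lies in some $Q_j'$, say $c\in Q_1'$. The key structural observation is that $a,b$ separate things in both pictures, so $Q_2$ (which avoids $c'$) must lie entirely inside $G-T'$ on one side, i.e.\ inside $Q_1'\cup\{c\}$ or $Q_2'$; a short case analysis using the fact that $c$ has neighbours in both $Q_1$ and $Q_2$ (3-connectivity) and that neighbours of $c$ in each $Q_i$ form a clique (claw-freeness, as in Lemma~\ref{lem:2-component}) should pin this down. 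The upshot I expect: $Q_2\subseteq V(Q_1')\cup\{c\}$ and $Q_2'\subseteq V(Q_1)\cup\{c'\}$, so $Q_1\cap Q_1'$ is exactly the "overlap block'' and $G-(T\cup T')$ splits into the two pieces $Q_2$ and $Q_2'$ (possibly after checking these are nonempty, which follows since each $Q_i$, having no cutvertex and $G$ being $3$-connected, has at least $3$ vertices, hence meets $V(G)\setminus(T\cup T')$). That gives the first assertion: $G-(T\cup T')$ has exactly two components.

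**The Hamiltonicity part.** Now assume $z\in T\cap T'$, so without loss of generality $z=a$. I want to show $G-z$ has a Hamilton cycle. The idea is to exploit that $b$ together with $c$ (resp.\ $c'$) nearly separates $G-z$: in $G-z$ the set $\{b,c\}$ separates $Q_1\setminus\{c'\}$-side material from $Q_2$, and $\{b,c'\}$ separates the other side. Concretely, partition $V(G)\setminus\{z\}$ into $A = V(Q_2)$, $B = V(Q_2')$, and the "middle'' $M = (V(G)\setminus\{z\})\setminus(A\cup B)$, which contains $b$ and all of the overlap. Build the Hamilton cycle of $G-z$ piece by piece using minimality: contract $Q_2$ (together with $a=z$ suppressed) to a single vertex and add edges to make the neighbours behave, apply the $C(k,1)$ property in the smaller $3$-connected claw-free graph to route a cycle through that contracted vertex plus the relevant vertices of $S$ while avoiding $z$, then re-expand using that $Q_2$ has no cutvertex (so $Q_2$ plus its attachments $\{b,c\}$ contains a Hamilton path of $Q_2$ between the two attachment points — this is exactly the no-cutvertex / $2$-connectedness leverage). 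Do the symmetric thing for $Q_2'$. Splicing the two expanded segments along the middle part $M$, and handling the middle (the overlap region $Q_1\cap Q_1'$ attached to $\{a,b,c,c'\}$) by the same contraction-and-reexpand trick, yields a cycle visiting every vertex of $G-z$.

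**Main obstacle.** The delicate point is the Hamiltonicity claim, not the two-components claim. Merely knowing each $Q_i$ has no cutvertex gives a Hamilton path across it only when it is $2$-connected with the two attachment vertices available; I will need to argue carefully that $Q_2$ together with $\{b,c\}$ (and the edges from $b,c$ into $Q_2$) admits a spanning path from (a neighbour of) $b$ to (a neighbour of) $c$, and similarly on the $Q_2'$ side, and that these local spanning paths can be chosen compatibly with a global cycle routed through the middle. Getting the "route through the middle'' to actually be Hamiltonian on the overlap $Q_1\cap Q_1'$ — rather than just passing through it — is where I expect to spend the most effort: I would set it up as one more application of the minimum-counterexample hypothesis to an auxiliary $3$-connected claw-free graph obtained by contracting both $Q_2$ and $Q_2'$ to single vertices, reducing the whole Hamiltonicity statement to a statement about a strictly smaller graph and then lifting. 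The case $z=c$ (i.e.\ $z$ is the non-shared vertex of one cut) would be handled by first observing it cannot happen under the hypothesis $z\in T\cap T'$, so only $z=a$ (a shared vertex) needs treatment.
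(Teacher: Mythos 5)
Your two-component argument is in the right spirit, if somewhat more laborious than necessary (the paper notes directly that the component $Q_1$ of $G-T$ avoiding the third vertex $u'$ of $T'$ is automatically a component of $G-(T\cup T')$, and symmetrically for $Q_2$; then claw-freeness plus $3$-connectivity applied to $v_1,v_2\in T\cap T'$ rules out a third component). But the Hamiltonicity half of your proposal goes astray, and the difficulty you anticipate is a symptom of a missing observation rather than a genuine obstacle.

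The key step you omit is Lemma~\ref{lem:z-3-cut}: in a minimum counterexample, \emph{every} $3$-cut containing $z$ separates a \emph{singleton} of $S$ from the rest of $G$. Since $z\in T\cap T'$, both $T$ and $T'$ are $3$-cuts through $z$, so both components $Q_1$ and $Q_2$ of $G-(T\cup T')$ are single vertices, say $x$ and $y$. The entire graph $G$ therefore has only six vertices: $x,y,v_2,u,u'$ and $z=v_1$ (in the paper's notation $T\cap T'=\{v_1,v_2\}$, $T\setminus T'=\{u\}$, $T'\setminus T=\{u'\}$). Each of $v_1,v_2$ is adjacent to both $x$ and $y$; and $uu'$ must be an edge, else $\{v_1,v_2\}$ would be a $2$-cut. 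Then $x\,v_2\,y\,u'\,u\,x$ is literally a Hamilton cycle of $G-z$. No contraction, no re-expansion, no ``routing through the middle.''

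Beyond being unnecessary, the contraction strategy you sketch is unlikely to close: the inductive hypothesis gives only the $C(k,1)$ property for smaller $3$-connected claw-free graphs, which is a statement about cycles through $\le k$ prescribed vertices avoiding one forbidden vertex --- it does not give you Hamiltonicity or spanning paths of the contracted graph, nor control over where a cycle visits outside the prescribed set. You would have no handle on forcing the lifted cycle to cover all of the ``overlap'' region, which is exactly the difficulty you flag. Invoking Lemma~\ref{lem:z-3-cut} dissolves that difficulty entirely by making the overlap empty and the graph tiny.
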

\begin{proof}
Assume that $T\cap T'=\{v_1,v_2\}$, $T\backslash (T\cap T')=\{u\}$ and $T'\backslash (T\cap T')=\{u'\}$. 

Since $T$ is a 3-cut, one of the components of $G-T$ does not contain $u'$ which we will denote by $Q_1$. Note that $Q_1$ is also a component of $G-(T\cup T')$.
Similarly, the component $Q_2$ of $G-T'$ not containing $u$ is also a component of $G-(T\cup T')$.
Since $G$ is 3-connected, both $v_1$ and $v_2$ have neighbors in $Q_1$ and $Q_2$.
It follows, since $G$ is claw-free, that both $v_1$ and $v_2$ have no neighbors in components of $G-(T\cup T')$ other than $Q_1$ and $Q_2$. If $G-(T\cup T')$ has a component different from $Q_1$ and $Q_2$, then the component is separated by $\{u,u'\}$ from the remaining subgraph, which contradicts the assumption that $G$ is 3-connected. This contradiction implies that  $G-(T\cup T')$ has exactly two components $Q_1$ and $Q_2$.  

Now, assume that $z\in T\cap T'$ and, without loss of generality, assume $z=v_1$. By  Lemma~\ref{lem:z-3-cut}, both $Q_1$ and $Q_2$ are single vertices from $S$, denoted by $x$ and $y$. Since $G$ is 3-connected, there is an edge joining $u$ and $u'$. But
then $xv_2yu'ux$ is a Hamiltonian cycle of $G-z$. 
\end{proof}

\begin{lemma}\label{lem:avoid-z-path}
Let $G$ be a minimum counterexample such that $G$ has exactly one 3-cut $T$ containing $z$, and $T$ separates a single vertex $x_1\in S$ from $Q=G-(T\cup \{x_1\})$. Then for some vertex $x\in V(Q)$ and a connected subgraph $H$ of $G-z$ with at least three vertices in $G-\{z,x_1\}$,
the vertex $x$ and $H$ are 3-linked in $G-z$. 
\end{lemma}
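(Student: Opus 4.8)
The plan is to produce the required pair $(x,H)$ cheaply: choose a vertex $x\in V(Q)$ with $xz\notin E(G)$ and set $H:=G-\{z,x_1,x\}$; then the at least $\deg_G(x)\ge 3$ edges leaving $x$ all land in $V(H)$ and directly exhibit a $3$-link from $x$ to $H$ in $G-z$, \emph{provided} $H$ is connected with at least three vertices. Write $T=\{z,w_1,w_2\}$; by hypothesis (together with Lemmas~\ref{lem:2-component} and~\ref{lem:z-3-cut}) $G-T$ has exactly the two components $\{x_1\}$ and $Q$, so $Q$ is connected, $N_G(x_1)=T$, and $N_G(q)\subseteq V(Q)\cup T$ for every $q\in V(Q)$. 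Two things then have to be arranged: that such an $x$ exists, and that $H$ is connected and large enough; each is obtained either by forbidding a Hamilton cycle of $G-z$ (which would pass through all of $S$ since $z\notin S$) or by invoking the assumed uniqueness of the $3$-cut $T$ through $z$.

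First I would rule out $|V(Q)|\le 1$: if $|V(G)|=4$ then $G=K_4$ and $G-z$ is a triangle through $S$; if $V(Q)=\{y\}$ then $\deg_G(y)\ge 3$ forces $y$ adjacent to each of $z,w_1,w_2$, and $x_1w_1yw_2x_1$ is a cycle of $G-z$ through $V(G)\backslash\{z\}\supseteq S$ — in either case contradicting that $G$ is a counterexample. So $|V(Q)|\ge 2$. Next, suppose every vertex of $Q$ were adjacent to $z$. Applying claw-freeness to $z$ and three of its neighbours $x_1,q_1,q_2$ with $q_1,q_2\in V(Q)$ distinct, and using $x_1q_1,x_1q_2\notin E(G)$, forces $q_1q_2\in E(G)$; hence $Q$ is a clique. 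Since $G$ is $3$-connected, $w_1$ and $w_2$ each have a neighbour in $Q$, and one can choose \emph{distinct} $a,b\in V(Q)$ with $aw_1,bw_2\in E(G)$: otherwise $w_1$ and $w_2$ would share a unique neighbour $v\in V(Q)$, whence $\{z,v\}$ would separate the nonempty set $V(Q)\backslash\{v\}$ from the rest of $G$, contradicting $3$-connectivity. Then $x_1$, $w_1$, a Hamilton path of the clique $Q$ from $a$ to $b$, $w_2$, $x_1$ is a Hamilton cycle of $G-z$, again a contradiction. Hence some $x\in V(Q)$ satisfies $xz\notin E(G)$.

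Finally, $G-\{z,x_1\}$ is connected (deleting two vertices from a $3$-connected graph) and has $|V(Q)|+2\ge 3$ vertices, and it has no cutvertex: a cutvertex $v$ would make $\{z,x_1,v\}$ a $3$-cut of $G$ containing $z$ and distinct from $T$ (as $x_1\notin T$), contradicting the hypothesis. So $G-\{z,x_1\}$ is $2$-connected, $H=G-\{z,x_1,x\}$ is connected, $|V(H)|=|V(Q)|+1\ge 3$, and $H\subseteq G-\{z,x_1\}$. Since $x$ is adjacent to neither $x_1$ nor $z$, every neighbour of $x$ in $G-z$ lies in $(V(Q)\backslash\{x\})\cup\{w_1,w_2\}=V(H)$, and there are at least $\deg_G(x)\ge 3$ of them; the corresponding edges form three internally disjoint paths from $x$ to three distinct vertices of $H$, each meeting $V(H)$ in exactly one vertex, so $x$ and $H$ are $3$-linked in $G-z$, as required.

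The only real content is the $2$-connectivity of $G-\{z,x_1\}$: this is the single place the uniqueness of the $3$-cut through $z$ is used, and I expect it to be the crux even though it is short. The two Hamilton-cycle escapes are routine. That the conclusion asks only for \emph{some} $x\in V(Q)$ and \emph{some} connected $H$ on at least three vertices is exactly what makes the argument this direct; had $H$ been required to carry more structure — for instance to pass through prescribed vertices of $S$ — the proof would be considerably more delicate.
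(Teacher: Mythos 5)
Your proof establishes the lemma as literally worded, but the statement is misworded in the paper: what is actually proved — and, more importantly, what is used in the proofs of Theorems~\ref{thm:c41} and~\ref{thm:c51} — is a \emph{universal} claim. There the lemma is invoked for specific pairs such as ``$x_4$ and $C$ are 3-linked in $G-z$'' and ``$x_2$ and $H$ are 3-linked in $G-z$,'' where $x_4,x_2$ are prescribed vertices of $S$ lying in $V(Q)$ and $C,H$ are prescribed cycles or subgraphs built earlier in those proofs. Producing one convenient pair $(x,H)$ — a vertex of $Q$ not adjacent to $z$ and $H:=G-\{z,x_1,x\}$ — gives no information about a prescribed $x_i$ and a prescribed cycle, so your argument cannot stand in for the lemma downstream. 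You flagged this yourself in your closing sentence; the ``more delicate'' variant you mention is in fact what the lemma has to say.

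The paper's own proof handles the universal version and is actually shorter: fix any $x\in V(Q)$ and any connected $H\subseteq G-\{z,x_1\}$ with $|V(H)|\ge 3$, and suppose $x$ and $H$ are \emph{not} 3-linked in $G-z$. By Menger's Theorem there is a 2-cut $U$ of $G-z$ separating $x$ from $H$; then $T'=U\cup\{z\}$ is a 3-cut of $G$ containing $z$. By Lemma~\ref{lem:z-3-cut}, $T'$ separates a singleton of $S$, and since $|V(H)|\ge 3$ and $x\in V(Q)$ (so $x\neq x_1$, and $x_1\notin V(H)$), that singleton cannot be $x_1$; hence $T'\neq T$, contradicting the assumed uniqueness of the 3-cut through $z$. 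Your 2-connectivity of $G-\{z,x_1\}$ is a correct observation and uses uniqueness of $T$ in the same spirit, but it only yields your single constructed pair; the Menger/second-3-cut contradiction is the step you are missing, and it is exactly what delivers the universality the later theorems require.
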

\begin{proof} Let $H$ be a connected subgraph of $G-z$ containing at least three vertices of $G-\{z,x_1\}$.
Suppose to the contrary that $G-z$ does not contain three internally disjoint paths joining $x$ to three distinct vertices of $H$.
In other words, $G-z$ has a 2-cut separating $x$ and $H$ by Menger's Theorem.
This 2-cut, together with $z$, forms a 3-cut of $G$ which we shall denote by $T'$. 
By Lemma~\ref{lem:z-3-cut}, $T'$ separates a single vertex of $S$ from the remaining subgraph of $G$.
This single vertex must be $x$ since $H$ has more than one vertex. Note that $x\in V(Q)=V(G)\backslash (T\cup \{x_1\})$.
So $T\ne T'$, a contradiction to the assumption that $G$ has exactly one 3-cut $T$ containing $z$.  
\end{proof}

\section{Property $C(4,1)$ for Claw-free Graphs}

In this section, we are going to show that every 3-connected claw-free graph is $C(4,1)$.

\begin{lemma}\label{lem:c41-z-in-2cut}
Let $G$ be a minimum counterexample to the $C(4,1)$ property.
That is, suppose $G$ is a smallest 3-connected claw-free graph containing a vertex $z$ and four additional vertices $x_1,x_2,x_3$ and $x_4$ such that $G$ has no cycle containing all vertices $x_i$'s, but avoiding $z$.
Then $z$ is contained in exactly one 3-cut in $G$. 
\end{lemma}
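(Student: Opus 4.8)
The plan is to argue by contradiction on two fronts. First I would show that $z$ lies in \emph{at least} one 3-cut, and then that it cannot lie in \emph{two} distinct ones.

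For the existence of a 3-cut through $z$: by Lemma \ref{lem:3-cut}, since $z\notin S$, the vertex $z$ is contained in some 3-cut $T$ of $G$. (Here I am using that $G$ is a minimum counterexample to $C(4,1)$, so Lemma \ref{lem:3-cut} applies verbatim with this $S=\{x_1,x_2,x_3,x_4\}$.) So the only real content is uniqueness.

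Suppose then that $z$ lies in two distinct 3-cuts $T$ and $T'$. I would split into cases according to $|T\cap T'|$. If $|T\cap T'|=2$, then $z\in T\cap T'$ and Lemma \ref{lem:3cuts-intersection} immediately gives that $G-z$ has a Hamiltonian cycle, which is certainly a cycle through $x_1,x_2,x_3,x_4$ avoiding $z$ — contradicting that $G$ is a counterexample. If $|T\cap T'|\le 1$, I would use Lemma \ref{lem:z-3-cut}: each of $T$ and $T'$ separates off a singleton component of $G-T$ (resp.\ $G-T'$) which is a single vertex of $S$. Call these $x$ and $x'$ respectively; since $G$ is claw-free, by Lemma \ref{lem:2-component} each of $G-T$, $G-T'$ has exactly two components. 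Writing $T=\{z,w_1,w_2\}$ and $T'=\{z,w_1',w_2'\}$, the singleton $\{x\}$ being a component of $G-T$ forces $N(x)=\{z,w_1,w_2\}$, and similarly $N(x')=\{z,w_1',w_2'\}$. Now I would analyze how $x$ sits relative to $T'$: since $x\ne x'$ (because $N(x)\ne N(x')$, as the two cuts are distinct and both cuts are exactly the neighborhoods) and $x$ has a neighbor, namely $z\in T'$, the vertex $x$ lies in the two-vertex component $\{x'\}$ is impossible, so $x$ lies in the \emph{other} component $Q'$ of $G-T'$; but then $w_1,w_2\in N(x)\subseteq Q'\cup T'$, and since $w_1,w_2\notin\{x'\}$ we get $w_1,w_2\in Q'\cup\{w_1',w_2'\}$. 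Pushing this, together with the symmetric statement for $x'$ relative to $T$, I expect to derive that $\{z\}\cup(\text{one or two more vertices})$ already separates $\{x,x'\}$ from the rest, forcing $|V(G)|$ to be very small — small enough that $G$ is a small 3-connected claw-free graph on which $C(4,1)$ can be checked directly (indeed, with $N(x)=N(x')$'s being near-equal and everything of bounded size, $G-z$ will be Hamiltonian, or $G$ will have at most four vertices outside $z$, etc.), contradiction.

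The main obstacle I anticipate is the bookkeeping in the $|T\cap T'|\le 1$ case: one has to track precisely which components of $G-T$ and $G-T'$ the singletons $x,x'$ land in, and rule out the degenerate possibility $x=x'$ or $T=T'$ cleanly, using claw-freeness (Lemma \ref{lem:2-component}) at each step to keep the component count at two. I would organize this as: (i) $N(x)=T$ and $N(x')=T'$; (ii) $x\ne x'$; (iii) locate $x$ in $G-T'$ and $x'$ in $G-T$; (iv) conclude a contradiction either by producing a small separator of size $\le 2$ (violating 3-connectivity) or by exhibiting the desired cycle directly. If a short self-contained argument resists me, an alternative is to mimic the proof of Lemma \ref{lem:3cuts-intersection} more closely — treating $G-(T\cup T')$ and showing it has few, controlled components — and then invoke minimality on the pieces glued back with triangles as in Lemma \ref{lem:z-3-cut}, obtaining a cycle avoiding $z$ through all of $S$ by pasting the pieces' cycles together.
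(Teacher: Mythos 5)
Your treatment of the case $|T\cap T'|=2$ is correct and matches the paper: Lemma \ref{lem:3cuts-intersection} then yields a Hamiltonian cycle of $G-z$, a contradiction. Your invocation of Lemma \ref{lem:3-cut} for existence is also fine (the paper leaves this implicit). The problem is the remaining case. Since $z\in T\cap T'$, ``$|T\cap T'|\le 1$'' really means $T\cap T'=\{z\}$, and this is where essentially all the work of the lemma lies. Your plan there has a genuine gap: you expect the bookkeeping around the singleton components $\{x\}$ and $\{x'\}$ (with $N(x)=T$, $N(x')=T'$) to force a small separator or a small graph, but this is false. The set $\{z,w_1,w_2,w_1',w_2'\}$ does separate $\{x,x'\}$ from the rest, but it has five vertices, not two or three, and the remaining piece $Q=G-(T\cup T'\cup\{x,x'\})$ can be arbitrarily large. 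Nothing in your sketch constrains $|V(G)|$, so no ``check $C(4,1)$ directly on a small graph'' step is available, and no sub-$3$ separator appears.

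What the paper actually does in the $T\cap T'=\{z\}$ case is qualitatively different from your plan. It first shows $Q$ is connected (claw-freeness applied to $(T\cup T')\setminus\{z\}$) and that $z$ has no neighbor in $Q$. It then splits on $|Q\cap S|$. If $Q$ contains at most one of $x_3,x_4$, the remaining one lies in $(T\cup T')\setminus\{z\}$, and the $C(3,1)$ property produces a cycle through $x_1,x_2$ and the third $x_i$ avoiding $z$; because $x_1$ and $x_2$ are singletons, this cycle is forced through all four vertices of $(T\cup T')\setminus\{z\}$ and hence through $x_4$. If both $x_3,x_4\in Q$, the proof starts from a $C(3,1)$ cycle through $x_1,x_2,x_3$, applies Menger to link $x_4$ to it with three internally disjoint paths avoiding $z$, applies Perfect's Theorem to reroute, and uses the Jumper Operations plus a case analysis on where the third path lands to construct the desired cycle. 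None of this is replaceable by a ``the graph is small'' argument, and your alternative of gluing triangles and invoking minimality (in the spirit of Lemma \ref{lem:z-3-cut}) is also not what is needed here: the lemma is proved by path surgery inside a fixed $G$, not by reduction to smaller graphs. You would need to supply this entire path-insertion machinery to close the gap.
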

\begin{proof}
Let $G$ be as stated above and let $S=\{x_1,x_2,x_3,x_4\}$.
Suppose to the contrary that $z$ is contained in at least two 3-cuts.
By Lemma~\ref{lem:z-3-cut} and the fact that $G$ is claw-free, $z$ is not contained in three different 3-cuts.
So assume that $z$ is contained in exactly two 3-cuts $T$ and $T'$.
By Lemma~\ref{lem:3cuts-intersection}, we have $T\cap T'=\{z\}$.

 Let $x_1\in S$ be separated by $T$ and $x_2\in S$ be separated by $T'$.
Denote $G-(T\cup T'\cup \{x_1,x_2\})$ by $Q$.  If $Q$ has at least two components $Q_1$ and $Q_2$, then at least one vertex in $(T\cup T')\backslash \{z\}$ has neighbors in both $Q_1$ and $Q_2$ since $G$ is 3-connected. This vertex together with its neighbors form a claw, a contradiction. Hence $Q$ is connected.

Since $G$ is claw-free, $z$ has no neighbor in $Q$.  Suppose $Q$ contains at most one vertex $x_3$ from $S$. Then $x_4$ belongs to either $T$ or $T'$. Then $G$ has a cycle $C$ containing $x_1, x_2$ and $x_3$, but avoiding $z$.
Note that the cycle $C$ must contain all the vertices of $(T\cup T')\backslash \{z\}$.
Then $C$ contains all the vertices of $S$, a contradiction to the assumption that $G$ is a counterexample.

So in the following, assume that $Q$ contains both $x_3$ and $x_4$.
Since $G$ has the $C(3,1)$-property, $G$ has a cycle $C$ containing $x_1, x_2, x_3$, but avoiding $z$.
Without loss of generality, assume that $x_3\in C(x_2, x_1)$.
By Menger's Theorem, $G$ has three internally disjoint paths  $P_1, P_2$ and $P_3$  joining $x_4$ and three distinct vertices of $C$ which do not contain $z$. If two of these three paths end on  the same closed segment of $C$ determined by $x_1, x_2$ and $x_3$, then we can insert $x_4$ into the cycle $C$ to obtain a cycle containing all the vertices of $S$, but not containing $z$, a contradiction to the assumption that $G$ is a counterexample. 
Hence, we may assume the three paths $P_1, P_2$ and $P_3$ end at $u_1, u_2$ and $u_3$, respectively, such that $u_i\in C(x_i, x_{i+1})$
for $i=1,2,$ and $u_3\in C(x_3, x_1)$.

By applying Perfect's Theorem to $x_3$ and the subgraph $H=C[u_3,u_2]\cup P_2\cup P_3 \cup P_1$, there are three internally disjoint paths $P_1', P_2'$ and $P_3'$ 
joining $x_3$
and three distinct vertices of $H$ which do not contain $z$ such that $P_2'$ and $P_3'$ end at $u_2$ and $u_3$, respectively.
Let $w$ be the vertex at which $P_1'$ ends.
Note that $w\notin P_2\cup P_3\cup C[x_2, u_2]\cup C[u_3,x_1]$.
For otherwise, one could insert $x_3$ into the cycle $C[u_3,u_2]\cup P_2\cup P_3$ by the path $P_1'$ and one of $P_2'$ and $P_3'$ to form a cycle containing all the vertices from $S$, but avoiding $z$.
Therefore, we may assume that either $w\in P_1$ or $w\in C(x_1, x_2)$.

Note that, if $w=u_1$, we could use Jumper Operations to modify the path $P_1$ or the cycle $C$ at $u_1$ to reduce the case $w=u_1$ to $w\ne u_1$.  

If $w\in P_1-u_1$, then there are two disjoint paths from $x_4$ to the segment of $C[x_2, x_3]$ (one is $P_2$ and the other is from $x_4$ to $w$ along $P_1$ then to $x_3$ along $P_1'$), which could be used to insert $x_4$ into the cycle $C$ to get a cycle through all vertices in $S$, but avoiding $z$, a contradiction.

So, in the following, assume that $w\in C(x_1,u_1)$ or $w\in C(u_1, x_2)$. By symmetry, it suffices to consider  $w\in C(x_1, u_1)$.  Then
$C'=C[x_1, w]\cup P_1'\cup P_2'\cup C^{-1}[u_2, u_1]\cup P_1\cup P_3\cup C[u_3, x_1]$
is a cycle of the type we seek, a contradiction to the assumption that $G$ is a counterexample.
Hence $z$ is contained in exactly one 3-cut and the proof is complete.
\end{proof}

\begin{thm}\label{thm:c41}
Let $G$ be a 3-connected claw-free graph.
Then $G$ satisfies property $C(4,1)$. 
\end{thm}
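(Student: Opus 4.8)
The plan is to follow the same template already used in the proofs of property $C(3,1)$ and of Lemma~\ref{lem:c41-z-in-2cut}: assume $G$ is a minimum counterexample to $C(4,1)$, with $S=\{x_1,x_2,x_3,x_4\}$ and forbidden vertex $z$. By Lemma~\ref{lem:3-cut} the vertex $z$ lies in a $3$-cut, and by Lemma~\ref{lem:c41-z-in-2cut} it lies in \emph{exactly one} $3$-cut $T=\{z,w_1,w_2\}$. By Lemma~\ref{lem:z-3-cut}, $G-T$ has two components, one of which is a single vertex $x_1\in S$; write $Q=G-(T\cup\{x_1\})$, so $T$ separates $x_1$ from $Q$, and by Lemma~\ref{lem:2-component} the component $Q$ has no cutvertex, i.e.\ $Q$ is $2$-connected. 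Since $G$ is claw-free, $z$ has no neighbour in $Q$, so every neighbour of $z$ lies in $\{w_1,w_2,x_1\}$, and the neighbours of $z$ induce a clique there. First I would dispose of the easy sub-cases: if fewer than three of $x_2,x_3,x_4$ lie in $Q$ then (arguing as in Lemma~\ref{lem:c41-z-in-2cut}) a $C(3,1)$-cycle through the $S$-vertices in $Q$ together with $x_1$ and the forced passage through $w_1,w_2$ already contains all of $S$, a contradiction; so we may assume $x_2,x_3,x_4\in V(Q)$.

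The core of the argument is then a link-pushing computation inside $Q$ together with $w_1,w_2$. Since $G$ has property $C(3,1)$ (the theorem proved just above in the excerpt), pick a cycle $C$ avoiding $z$ through three of the four $S$-vertices, say $x_1,x_2,x_3$; note $C$ must pass through $w_1$ and $w_2$ (the only way into $x_1$), so $x_2,x_3$ and $w_1,w_2$ all lie on $C$. If $x_4\in V(C)$ we are done, so assume not. By Menger's theorem there are three internally disjoint paths $P_1,P_2,P_3$ from $x_4$ to three distinct vertices of $C$ avoiding $z$ (possible since deleting $z$ leaves $x_4$ joined to $C$ by enough paths inside $Q$, which is $2$-connected, plus a third escape; one has to check connectivity carefully here, using that the only $3$-cut through $z$ is $T$ and invoking Lemma~\ref{lem:avoid-z-path} to produce a third path avoiding $z$). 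If two of the three endpoints lie in a common closed segment of $C$ cut out by $x_1,x_2,x_3$, insert $x_4$ and finish. Otherwise the three endpoints $u_1,u_2,u_3$ are distributed one per open segment, $u_i\in C(x_i,x_{i+1})$ (indices mod $3$). Now apply Perfect's Theorem (Theorem~\ref{thm:PT}), exactly as in the proof of Lemma~\ref{lem:c41-z-in-2cut}: set $H$ to be the union of $P_1,P_2,P_3$ with the arc of $C$ between two of the $u_i$, and push the $2$-link from $x_3$ (or whichever $S$-vertex lies between two of the $u_i$ on the ``short side'') to a $3$-link ending at a new vertex $w$; a case analysis on the position of $w$ (on one of the $P_i$, or on an arc of $C$) produces in each case two disjoint paths ending in a common segment, hence a cycle through all of $S$ avoiding $z$ — contradiction. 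Where one application of Perfect's Theorem does not suffice to bring all the loose ends into a single segment, iterate it, or invoke the strengthened version Theorem~\ref{thm:strongPT} with $S_1,S_2$ taken from the relevant arcs of $C$ and $T_1,T_2$ the already-routed $2$-links, to rearrange two of the three connections simultaneously while preserving their endpoints.

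The main obstacle I anticipate is the bookkeeping in the Perfect's-Theorem step: with four target vertices on the cycle rather than three (the $C(3,1)$ proof only had to juggle $x_1,x_2$ on $C$ and insert $x_3$), there are more segments, more ways for the pushed endpoint $w$ to land, and the ``Jumper Operations'' must be invoked repeatedly to ensure endpoints of the various paths on $C$ are distinct before each insertion argument. Keeping track of which arcs of $C$ remain usable after each reroute — and guaranteeing that the final cycle $C'$ one writes down as a concatenation of arcs and pushed paths actually visits $x_1,x_2,x_3,x_4$ and misses $z$ — is the delicate part. I would organise the write-up by first fixing, once and for all, the cyclic order $x_1,u_1,x_2,u_2,x_3,u_3$ on $C$, then treating the position of $w$ in the three possible locations as three parallel cases, in each case exhibiting the replacement cycle explicitly as in the displayed formula $C'=C[x_1,w]\cup P_1'\cup P_2'\cup C^{-1}[u_2,u_1]\cup P_1\cup P_3\cup C[u_3,x_1]$ of Lemma~\ref{lem:c41-z-in-2cut}. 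Secondary technical points to verify: that $Q$ being $2$-connected (Lemma~\ref{lem:2-component}) and the uniqueness of the $3$-cut through $z$ (Lemma~\ref{lem:c41-z-in-2cut}) together legitimately supply the three $z$-avoiding paths $P_1,P_2,P_3$ — this is precisely the role of Lemma~\ref{lem:avoid-z-path}, applied with $x=x_4$ and $H$ a suitable connected subgraph of $C$ with at least three vertices — and that the clique structure on $N(z)$ lets us reroute any path through $z$ off of $z$, as was done for $P_3'$ in the $C(3,1)$ proof.
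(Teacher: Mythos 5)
Your plan is essentially the paper's proof: reduce via Lemma~\ref{lem:c41-z-in-2cut} and Lemma~\ref{lem:z-3-cut} to the situation where $z$ lies in a unique $3$-cut $T$ separating a single vertex $x_1\in S$, dispose of the case $T\cap S\neq\emptyset$, take a $C(3,1)$-cycle $C$ through $x_1,x_2,x_3$ avoiding $z$, use Lemma~\ref{lem:avoid-z-path} to $3$-link $x_4$ to $C$ in $G-z$, and then apply Perfect's Theorem and a case analysis on the landing vertex $w$.

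Two remarks. First, your aside that $z$ has no neighbour in $Q$ is false here: that fact held in the two-$3$-cut configuration of Lemma~\ref{lem:c41-z-in-2cut} (where $Q=G-(T\cup T'\cup\{x_1,x_2\})$ and a neighbour of $z$ in $Q$ would complete a claw with the two singletons), but in the present single-$3$-cut setting $z\in T$ may certainly have neighbours on the $Q$-side. Luckily nothing downstream in your argument uses it. Second, the proposal stops exactly where the real work begins: you assert that ``a case analysis on the position of $w$ \dots produces in each case two disjoint paths ending in a common segment,'' but this is precisely the content that must be exhibited. The paper applies Perfect's Theorem once, to $x_2$ (the vertex between $u_1$ and $u_2$ with the standard labeling $u_i\in C(x_i,x_{i+1})$) and $H=C[x_1,u_1]\cup P_1\cup P_2\cup C[u_2,x_1]\cup P_3$, then treats the three surviving positions $w\in P_3\setminus\{x_4,u_3\}$, $w\in C(u_3,x_1)$, $w\in C(x_3,u_3)$, writing an explicit replacement cycle in each. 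A single application of Perfect's Theorem suffices; Theorem~\ref{thm:strongPT} is not needed for $C(4,1)$ (it is only invoked in the $C(5,1)$ argument), so the hedge about iterating or invoking the strengthened version is a red herring rather than a gap.
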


\begin{proof}
Let $G$ be a minimum counterexample. 
By Lemma~\ref{lem:c41-z-in-2cut}, $z$ is contained in exactly one 3-cut $T$.
By Lemma~\ref{lem:z-3-cut}, $T$ separates a single vertex from $S$, say $x_1$. Let $Q$ be a component of $G-T$ not containing $x_1$. 
\medskip

\noindent{\bf Claim.} {\sl $T\cap S=\emptyset$.}
\medskip

\noindent{\em Proof of the Claim.} If not, assume that $|T\cap S|\ge 1$.
Then $Q\cap S$ consists of at most two vertices, say $x_2$ and $x_3$.
So $x_4\in T$.
Since $G$ satisfies property $C(3,1)$, $G$ has a cycle containing $x_1, x_2$ and $x_3$, but avoiding $z$.
Since $T$ is a 3-cut separating $x_1$, it follows that both vertices of $T\backslash \{z\}$ are contained in this cycle, which implies that the cycle contains all the vertices of $S$, a contradiction of the assumption that $G$ is a counterexample.
Hence $T\cap S=\emptyset$ as claimed. \medskip

By the claim, $S\backslash \{x_1\}\subseteq V(Q)$.
By the property $C(3,1)$, let $C$ be a cycle of $G$ containing $x_1, x_2$ and $x_3$, but avoiding $z$.
Since $G$ is a counterexample, $x_4$ is not on $C$. Since $z$ is contained in exactly one 3-cut, by Lemma~\ref{lem:avoid-z-path}, $x_4$ and 
$C$ are 3-linked in $G-z$. Therefore, there are three internally disjoint paths $P_1,P_2$ and $P_3$ joining $x_4$ and three distinct vertices of $C$ each of which avoids $z$.





Assume that $P_1, P_2$ and $P_3$ end on $C$ at $u_1, u_2$ and $u_3$, respectively.
Note that no two of these paths end in the same closed segment of $C$ determined by $x_1, x_2$ and $x_3$, or otherwise we could insert $x_4$ into $C$ using the two paths to get a cycle of the type desired, a contradiction to the assumption that $G$ is a counterexample.
Without loss of generality, assume that $u_i\in C(x_i, x_{i+1})$ for each $i\in \{1,2\}$ and $u_3\in C(x_3, x_1)$. 
Let $H=C[x_1, u_1]\cup P_1\cup P_2\cup C[u_2, x_1]\cup P_3$.
By Lemma~\ref{lem:avoid-z-path},  $x_2$ and $H$ are 3-linked in $G-z$.
Applying Perfect's Theorem to $x_2$ and $H$ in $G-z$, there are three disjoint internally disjoint paths $P_1', P_2'$ and $P_3'$ joining $x_2$ and three distinct vertices of $H$ such that $P_1'$ and $P_2'$ end at $u_1$ and $u_2$, respectively.
Assume that $P_3'$ ends at $w$.
Then $w$ does not belong to $C[x_1, u_1] \cup P_1\cup P_2\cup C[u_2,x_3]$.
Otherwise, we could insert $x_2$ into the cycle $C[x_1,u_1]\cup P_1\cup P_2\cup C[u_2,x_1]$ which already contains $x_1, x_4$ and $x_3$ to generate a cycle of the type desired, a contradiction to the assumption that $G$ is a counterexample. 

Note that the case $w=u_3$ can
be
converted to the case $w\ne u_3$ by using Jumper Operations at the vertex $w=u_3$.
So it suffices to consider the cases when $w\in P_3-\{x_4, u_3\}$, when $w\in C(u_3, x_1)$ or when $w\in C(x_3, u_3)$. 

If $w\in P_3-\{x_4, u_3\}$, then $x_4$ can be inserted into the cycle $C[u_2,u_1]\cup P_1'\cup P_2'$ by replacing $P_1'$ with the path from $u_1$ to $x_4$ along $P_1$ and to $w$ along $P_3$ and then to $x_2$ along $P_3'$.
Therefore, $G$ has a cycle containing all the vertices of $S$, but avoiding $z$, again a contradiction. 

If $w\in C(u_3, x_1)$, then $C'=C[x_1, u_1]\cup P_1\cup P_3\cup C^{-1}[u_3, u_2]\cup P_2'\cup P_3'\cup C[w,x_1]$ is a cycle containing all the vertices of $S$, but avoiding $z$, a contradiction.
If $w\in C(x_3, u_3)$, then $C'=C[x_1,x_2]\cup P_3'\cup C^{-1}[w,u_2]\cup P_2 \cup P_3\cup C[u_3, x_1]$ is a cycle containing all the vertices of $S$, but avoiding $z$, a contradiction yet again.
This completes the proof of the theorem. 
\end{proof}

\section{Property $C(5,1)$ for Claw-free Graphs}

We are now prepared to prove the last case $C(5,1)$ of our main result on claw-free graphs. As we shall see, Theorem~\ref{thm:strongPT} will play an important role in the our proof.

\begin{lemma}\label{C51-z-in-2cut}
Let $G$ be a minimum counterexample to $C(5,1)$.
That is, suppose $G$ is a smallest 3-connected claw-free graph containing a vertex $z$ and five additional vertices $x_1,x_2,x_3, x_4$ and $x_5$ such that $G$ has no cycle containing all the $x_i$'s, but avoiding $z$. Then $z$ belongs to exactly one 3-cut. 
\end{lemma}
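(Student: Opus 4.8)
The plan is to mimic the structure of Lemma~\ref{lem:c41-z-in-2cut} (the $C(4,1)$ analogue), arguing by contradiction: suppose $z$ lies in at least two $3$-cuts $T$ and $T'$. By Lemma~\ref{lem:z-3-cut} and claw-freeness, $z$ cannot lie in three distinct $3$-cuts (three components around $z$ would force a claw at $z$ together with a neighbour in each), so $z$ lies in exactly two. Lemma~\ref{lem:3cuts-intersection} then forces $T\cap T'=\{z\}$, say $T=\{z,v_1,v_2\}$ and $T'=\{z,v_1',v_2'\}$. By Lemma~\ref{lem:z-3-cut}, $T$ separates a single vertex $x_1\in S$ from the rest and $T'$ separates a single vertex $x_2\in S$; let $Q=G-(T\cup T'\cup\{x_1,x_2\})$. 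As in the $C(4,1)$ proof, a claw-freeness argument at a vertex of $(T\cup T')\setminus\{z\}$ (each such vertex has neighbours in every component of $G-(T\cup T')$ other than the singletons, by $3$-connectivity) shows $Q$ is connected, and $z$ has no neighbour in $Q$.

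Next I would dispose of the easy distribution of $S$. If $Q$ contains at most two vertices of $S$, say only $x_3,x_4$, then $x_5$ lies in $T\cup T'$, i.e.\ in $(T\cup T')\setminus\{z\}$; invoking property $C(4,1)$ (Theorem~\ref{thm:c41}), $G$ has a cycle $C$ through $x_1,x_2,x_3,x_4$ avoiding $z$, and since $T$ and $T'$ each separate a singleton from $C$, the cycle $C$ is forced to pass through all of $(T\cup T')\setminus\{z\}$, hence through $x_5$ — contradiction. So $Q$ must contain three vertices of $S$, namely $x_3,x_4,x_5$.

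The substantive case is when $\{x_3,x_4,x_5\}\subseteq V(Q)$. Here I would start from a cycle $C$ through $x_1,x_2,x_3,x_4$ avoiding $z$ (by $C(4,1)$), note $x_5\notin V(C)$ since $G$ is a counterexample, and use Menger's theorem to get three internally disjoint paths $P_1,P_2,P_3$ from $x_5$ to distinct vertices of $C$. If any two of them end in a common closed segment of $C$ cut out by $x_1,x_2,x_3,x_4$, we insert $x_5$ and win; so the three endpoints $u_1,u_2,u_3$ lie in three distinct such segments. Then, following the pattern of Theorem~\ref{thm:c41} and Lemma~\ref{lem:c41-z-in-2cut}, I would apply Perfect's Theorem (Theorem~\ref{thm:PT}) — possibly twice, or Theorem~\ref{thm:strongPT} once — to push the remaining specified vertex of $S$ (the one among $x_1,x_2,x_3,x_4$ in the segment not hit by a $P_i$) onto the relevant auxiliary subgraph $H$ built from $C$ and the $P_i$'s, re-route along the new paths, use Jumper Operations to avoid the degenerate "ends at the same vertex" configurations, and in each resulting sub-case exhibit an explicit cycle through all of $S$ avoiding $z$, contradicting minimality.

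The key external inputs are that we must be able to find the path system avoiding $z$ at each stage; this is exactly where the hypothesis "$z$ in at least two $3$-cuts" is used to derive a contradiction with "$z$ in exactly one $3$-cut" — but note that in \emph{this} lemma we are \emph{not} yet entitled to Lemma~\ref{lem:avoid-z-path} (which presupposes a single $3$-cut), so the $z$-avoidance must instead come from the structure of $T,T'$ and the location of $Q$: since $z$ has no neighbour in $Q$ and the two singletons $x_1,x_2$ are themselves in $S$, any path from $x_5\in Q$ toward $C$ can be kept inside $G-z$. I expect the main obstacle to be the bookkeeping in the final insertion step: one must enumerate where the Perfect's-Theorem endpoint $w$ can land (on one of the $P_i$'s, or on one of several arcs of $C$), and for each location write down the correct concatenation of arcs and paths forming a cycle that hits all five $x_i$ while missing $z$ — a longer but essentially mechanical case analysis paralleling the $C(4,1)$ argument, with Theorem~\ref{thm:strongPT} replacing a single application of Perfect's Theorem when two specified vertices must be re-linked simultaneously.
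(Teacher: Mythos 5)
Your outline reproduces the paper's preliminary setup faithfully: $z$ in three or more $3$-cuts would give a claw at $z$ with the three singletons; $T\cap T'=\{z\}$ via Lemma~\ref{lem:3cuts-intersection}; the residual graph $Q=G-(T\cup T'\cup\{x_1,x_2\})$ is connected and $z$-anticomplete by the claw-freeness argument from the $C(4,1)$ proof; the easy case where $S$ touches $T\cup T'$ disposes immediately; and in the main case one starts from a $C(4,1)$ cycle $C$ through $x_1,\dots,x_4$ and a Menger fan from $x_5$. The observation that $z$-avoidance of paths comes from $z$ having no neighbour in $Q$ (so that a path from $x_5$ internally disjoint from $C$ can never hit $z$) is exactly the paper's mechanism, and you are right that Lemma~\ref{lem:avoid-z-path} is not available here.

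Where the proposal has a genuine gap is in dismissing the remainder as ``essentially mechanical case analysis.'' The paper's proof hinges on two structural claims that are not bookkeeping but the real content. \textbf{Claim~1} asserts that $x_3$ and $x_4$ cannot lie in opposite open arcs $C(x_1,x_2)$ and $C(x_2,x_1)$; proving it already takes two nested applications of Perfect's Theorem, first pushing $x_4$ onto a rerouted cycle and then, when the new endpoint lands in $C(x_3,x_2)$, pushing $x_3$ as well. This forces $x_1,x_2,x_3,x_4$ into one cyclic order and lets $C(x_1,x_2)$ be $S$-free. \textbf{Claim~2} then asserts that no $P_i$ lands on $C[x_3,x_4]$; its proof again splits into two sub-cases (whether $C[x_1,x_2]$ is hit or not), each requiring two Perfect's-Theorem rounds and an exhaustive endpoint check. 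Only \emph{after} Claims 1 and 2 does the geometry collapse to the single configuration $u_1\in C(x_1,x_2)$, $u_2\in C(x_2,x_3)$, $u_3\in C(x_4,x_1)$ with $C[x_3,x_4]$ untouched, which is precisely the setting where Theorem~\ref{thm:strongPT} is invoked. Without discovering these organizing claims, an undirected enumeration of where the Perfect's-Theorem endpoint $w$ can land is not guaranteed to close — and indeed without Claim~1 the arcs between consecutive $x_i$'s are not even well-defined.

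A second omission is technical but would block a literal application of your plan: Theorem~\ref{thm:strongPT} is applied with $S_1=V(C[x_3,x_4])$ and $S_2=V(H_1)$, and the hypothesis requires $S_1$ to support a $3$-link, hence $|S_1|\ge 3$. When $C[x_3,x_4]$ is a single edge $x_3x_4$ this fails. The paper handles it by blowing $x_3$ up into a clique $K$ of order $d_G(x_3)\ge 3$, obtaining a modified $3$-connected claw-free graph in which a path $C'[x_3,x_4]$ through $K\cup\{x_4\}$ has $\ge 4$ vertices, applying Theorem~\ref{thm:strongPT} there, and translating the resulting paths back to $G$. This degenerate case is invisible from your outline but is necessary for the argument to go through.
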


\begin{proof} Let $G$ be as stated above and let $S=\{x_1,x_2,x_3,x_4,x_5\}$.
Suppose to the contrary that $z$ belongs to at least two different 3-cuts, say $T$ and $T'$. 
By Lemma~\ref{lem:z-3-cut} and the fact that $G$ is claw-free, $T$ and $T'$ are the only two 3-cuts of $G$ containing $z$. Without loss of generality, assume that $T$ separates the single vertex $x_1\in S$ and $T'$ separates the single vertex $x_2\in S$ by Lemma~\ref{lem:z-3-cut}. Let $Q=G-(T\cup T'\cup \{x_1,x_2\})$. An argument similar to that used in the proof of Lemma~\ref{lem:c41-z-in-2cut} shows that $Q$ is connected and $z$ has no neighbors in $Q$. 

By Theorem~\ref{thm:c41}, $G$ has a cycle $C$ containing $x_1, x_2$ and two other vertices from $S$, say $x_3$ and $x_4$, but avoiding $z$.  
\medskip

\noindent{\bf Claim~1.} {\sl Either $C(x_1, x_2)\cap S=\emptyset$ or  $C(x_2,x_1)\cap S=\emptyset$.} 
\medskip

\noindent{\em Proof of Claim~1:} Suppose the Claim is false and assume that $x_3\in C(x_1,x_2)\cap S$ and $x_4\in C(x_2, x_1)\cap S$.
By Menger's Theorem, there are three internally disjoint paths $P_1, P_2$ and $P_3$ joining $x_5$ and three distinct vertices of $C$, say $u_1, u_2$ and $u_3$,
respectively. Note that $P_1,P_2$ and $P_3$ do not contain $z$ because they are paths in $Q$. 
Note that $x_1, x_2, x_3$ and $x_4$ separate $C$ into four segments, none of which contains two vertices of $\{u_1,u_2, u_3\}$.
For otherwise, one could use the two paths with endvertices in the same segment to insert $x_5$ into $C$ to generate a cycle
of the type we seek.
It follows that: 

\begin{description}
\item[($**$) ] {\sl  any three internally disjoint paths from $x_5$ to $C$ end in three different segments of $C$ determined by
$x_1,\ldots, x_4$.}
\end{description}

By symmetry, we may assume that $u_1\in C(x_1, x_3]$, $u_2\in C(x_2, x_4)$ and $u_3\in C(x_4, x_1)$.
Now, applying Perfect's Theorem to $x_4$ and $H=C[u_3,u_2]
\cup (\bigcup P_i)$, we obtain three internally disjoint paths joining $x_4$ and three distinct vertices of $H$, and two of them, say $P_1'$ and $P_2'$, end at $u_2$ and $u_3$.
Assume the third path $P_3'$ from $x_4$ to $H$ ends at $w$. Note that $P_1',P_2'$ and $P_3'$ do not contain $z$.
By $(**)$ and Jumper Operations, we may assume that $w\notin P_1$. 
The vertex $w$ does not belong to $P_2\cup P_3\cup C[u_3,x_1)\cup C(x_2, u_2]$, for otherwise $x_4$ could be inserted into
the cycle $C[u_3,u_2]\cup P_2\cup P_3$ by two of the three paths from $P_1', P_2'$ and $P_3'$ to generate the cycle sought
and again we would have a contradiction.
Hence, $w\in C(x_1, u_1)$ or $ C(u_1, x_3]$ or $C(x_3,x_2)$. 

If $w\in C(x_1, u_1)$, then $C'=C[x_1, w]\cup P_3'\cup P_1'\cup C^{-1}[u_2, u_1]\cup P_1\cup P_3 \cup C[u_3, x_1]$ is a
cycle of the type desired and again we have a contradiction.
If $w\in C(u_1, x_3]$,
then cycle $C'= C[x_1, u_1]\cup P_1\cup P_2\cup C^{-1}[u_2, w]\cup P_3'\cup P_2' \cup C[u_3, x_1]$ again yields a contradiction. 
So suppose that $w\in C(x_3,x_2)$. Then applying Perfect's Theorem to $x_3$ and $H=C[u_3,u_1]\cup C[w, u_2] \cup (\bigcup P_i)\cup (\bigcup P_i')$, we obtain three internally disjoint paths $P_1'', P_2''$ and $P_3''$ from $x_3$ to $H$ ending at $u_1, w$ and a third vertex $w'$, respectively. By Jumper Operations, $w'\notin\{u_2, u_3, x_4, x_5\}$.   Again, none of these three paths contains $z$ because they are in $Q$.  Note that $w'\notin C(x_1,u_1)\cup C(w, x_2)\cup P_1\cup P_3'$. Otherwise, $x_3$ can be inserted into the cycle $C[u_3,u_1]\cup P_1\cup P_2\cup C^{-1}[u_2,w]\cup P_3'\cup P_2'$ by using two paths from among the $P_i''$'s to generate a cycle of the type desired, a contradiction. Similarly, $w'\notin P_2\cup P_3$ (otherwise, $x_5$ could be inserted to the cycle $C[u_3,u_1]\cup P_1''\cup P_2''\cup C[w, u_1]\cup P_1'\cup P_2'$ to yield a contradiction) and $w'\notin P_1'\cup P_2'$ (otherwise, $x_4$ could be inserted to the cycle $ C[u_3,u_1]\cup P_1''\cup P_2''\cup C[w, u_1]\cup P_2\cup P_3$ to yield a contradiction). Therefore, $w'\in C(x_2, u_2)$ or $C(u_3, x_1)$. If $w'\in C(x_2,u_2)$, then $C'=C[x_1, u_1]\cup P_1''\cup P_3''\cup C^{-1}[w', w]\cup P_3'\cup P_1'\cup P_2\cup P_3\cup C[u_3,x_1]$ is a cycle of the type desired, a contradiction. If $w'\in C(u_3, x_1)$, then $C'=C[x_1,u_1]\cup P_1\cup P_3\cup P_2'\cup P_1'\cup C^{-1}[u_2, w]\cup P_2''\cup P_3''\cup C[w',x_1]$ is a cycle of the type desired, a contradiction again. 
This completes the proof of Claim~1.

\medskip

By Claim~1, every cycle $C$ containing $x_1, x_2$ and two other vertices from $S$ satisfies the property that
one of $C(x_1,x_2)$ and $C(x_2,x_1)$ does not contain vertices from $S$.
Without loss of generality, assume that $C(x_1,x_2)$ fails to contain vertices from $S$.
So then $C(x_2,x_1)$ must contain two vertices from $S$, say  $ x_3$ and $x_4$. Assume that $x_1, x_2, x_3$ and $x_4$ appear clockwise around the cycle $C$.
By Menger's Theorem, there are three disjoint paths $P_1, P_2$ and $P_3$ from $x_5$ to  $C$  ending at three distinct vertices $u_1, u_2$ and $u_3$, respectively. Since all three paths belong to $Q$, they do not contain the vertex $z$. 
Moreover, each of the segments $C[x_1,x_2]$, $C[x_2,x_3]$, $C[x_3,x_4]$ and $C[x_4,x_1]$ contains at most one vertex from $\{u_1,u_2,u_3\}$. \medskip

\noindent{\bf Claim~2.} {\sl The segment $C[x_3,x_4]$ does not contain any vertex from $\{u_1,u_2, u_3\}$.} \medskip

\noindent{\em Proof of Claim~2.} Assume, to the contrary, that $C[x_3,x_4]$ does contain a vertex from $\{u_1,u_2,u_3\}$. 

First,  assume that $C[x_1,x_2]$ does not contain a vertex of $\{u_1,u_2,u_3\}$.
Without loss of generality, we further assume that $u_1\in C(x_2,x_3)$, $u_2\in C(x_3,x_4)$ and
$u_3\in C(x_4,x_1)$, respectively.
Now apply Perfect's Theorem to $x_3$ and $H=C[x_1,u_1]\cup P_1\cup P_2 \cup C[u_2, x_1]\cup P_3$. There are three internally disjoint paths joining $x_3$ and three distinct vertices of $H$ such that two of them, say $P_1'$ and $P_2'$, end at $u_1$ and $u_2$, and the third path  $P_3'$ ends at some vertex $w$. Using Jumper Operations, we may assume that $w\notin \{u_3, x_5\}$. 
A routine check shows that $w$  must belong to $C(x_1,x_2)$, 
for otherwise, there exists a cycle through $x_1,...,x_5$ which avoids $z$, yielding a contradiction.

Now apply Perfect's Theorem to $x_4$ and $H'=C[u_3, u_1]\cup (\bigcup P_i) \cup (\bigcup P_i')$. There are three internally
disjoint paths  joining $x_4$ and three distinct vertices of $H'$ such that two of them, say $P_1''$ and $P_2''$, end at $u_2$ and $u_3$ and the third path $P_3''$ ends at some vertex $w'$. By Jumper Operations, $w'\notin\{x_3, x_5, u_1, w\}$. 
By symmetry of $x_3$ and $x_4$, the vertex $w'$ must belong to $C(x_1,x_2)$ or $P_3'$.
If $w'\in P_3'$, 
then $C'=C[x_1,u_1]\cup P_1'\cup P_3'[x_3,w']\cup P''_3\cup P_1''\cup P_2\cup P_3\cup C[u_3,x_1]$ is a cycle of the type we need, a contradiction.
Therefore, $w'\in C(x_1,x_2)$. Now let $C'=C[x_1,w]\cup P'_3\cup P_2' \cup P_1'' \cup P_3''\cup C[w',u_1]\cup P_1\cup P_3\cup C[u_3, x_1]$ if $w'\in C(w,x_2)$,  or let $C'=C[x_1,w']\cup P''_3\cup P_1'' \cup P_2' \cup P_3'\cup C[w,u_1]\cup P_1\cup P_3\cup C[u_3,x_1]$ if  $w'\in C(x_1,w)$. Then $C'$ is a cycle containing all five vertices
from $S$, but avoiding $z$, a contradiction. This contradiction implies that $C[x_1,x_2]$ does contain a vertex of $\{u_1,u_2,u_3\}$.

So in the following, assume that $u_1\in C(x_1,x_2)$ and $u_2\in C[x_3,x_4]$. Then $u_3$ belongs to either $C(x_2, x_3)$ or $C(x_4, x_1)$. By symmetry, without loss of generality, let us assume that $u_3\in (x_4, x_1)$. Now, apply Perfect's Theorem to $x_4$ and $H=C[u_3, u_2]\cup (\bigcup P_i)$. 
There are three internally disjoint paths all avoiding $z$ joining $x_3$ and three distinct vertices of $H$ such that two of them, say $P_1'$ and $P_2'$, end at $u_2$ and $u_3$, and the third path $P_3'$ end at some vertex $w$. By Jumper Operations, $w\notin \{x_5,u_1\}$. Note that $w$ does not belong to $P_2, P_3, C(x_3, u_2)$ or $C(u_3, x_1)$ because, otherwise, $x_4$ could be inserted into the cycle $C[u_3, u_2]\cup P_2\cup P_3$ by using two of the three paths $P_1',P_2'$ and $P_3'$ to generate a cycle of the type desired, a contradiction. If $w\in P_1$, then $x_5$ can be inserted into the cycle $C[u_3, u_2]\cup P_1'\cup P_2'$ by the paths $P_2, P_1$ and $P_3'$ to generated a cycle of the type desired, a contradiction again. If $w\in C(x_1, u_1)$, then $C'=C[x_1,w] \cup P_3'\cup P_1'\cup C^{-1}[u_2, u_1]\cup P_1\cup P_3\cup C[u_3, x_1]$ yields a cycle of the type sough, a contradiction. If $w\in C(u_1, x_2)$, then $C'=C[x_1, u_1]\cup P_1\cup P_2\cup C^{-1}[u_2, w]\cup P_3'\cup P_2'\cup C[u_3, x_1]$ yields a cycle the type we want, a contradiction again. So $w\in C(x_2,x_3)$. 

Now apply Perfect's Theorem to $x_3$ and $H''=C[u_3, w]\cup (\bigcup P_i)\cup (\bigcup P_i')$. There are three internally disjoint paths avoiding $z$ which join $x_3$ and three distinct vertices of $H''$ such that two of them, say $P_1''$ and $P_2''$, end at $w$ and $u_2$, respectively, and the third path $P_3''$ ends at some vertex $w'\notin \{u_1, u_3, x_4, x_5\}$ by Jumper Operations. Note that $w'\notin C[x_2, w] \cup P_3' \cup P_2\cup P_1'$ since, otherwise, $x_3$ could be inserted into $C[u_3, w]\cup P_3'\cup P_1'\cup P_2\cup P_3$ to generate a cycle of the type desired, a contradiction. Similarly, $w'\notin P_1\cup P_3$ (otherwise, $x_5$ could be inserted into $C[u_3, w]\cup P_1''\cup P_2''\cup P_1'\cup P_2'$) and $w'\notin P_2'$ (otherwise, $x_4$ could be inserted into $C[u_3, w]\cup P_1''\cup P_2''\cup P_2\cup P_3$ by $P_3', P_3''$ and a subpath of $P_2'$).  
If $w'\in C(x_1,x_2)$, then replace the subpath of $C(x_1, x_2)$ joining $w'$ and $u_1$ by $P_1\cup P_2\cup P_2''\cup P_3''$ (which contains both $x_3$ and $x_5$) in the cycle $C[u_3, w]\cup P_3'\cup P_2'$ to generate a cycle that we need, a contradiction. So $w\in C(u_3, x_1)$. However, the cycle $C'= C[x_1, w]\cup P_3'\cup P_2'\cup P_3\cup P_2\cup P_2''\cup P_3''\cup C[w', x_1]$ is then of the type we desired, a contradiction again. This completes the proof of Claim~2. 

\medskip

By Claim~2, we may assume that $u_1\in C(x_1,x_2)$, $u_2\in C(x_2, x_3)$ and $u_3\in C(x_4, x_1)$. Now, we need the stronger version of Perfect's Theorem, Theorem~\ref{thm:strongPT}. Consider $C[x_3,x_4]$ and $H_1=C[u_3, u_2]\cup (\bigcup P_i)$, which are 2-linked. The 2-link consists of two disjoint paths $C[x_4, u_3]$ and $C[u_2, x_3]$.  If $C[x_3,x_4]$ has at least three vertices, apply Theorem~\ref{thm:strongPT} to $C[x_3, x_4]$ and $H_1$. There are three internally disjoint paths  $P_1', P_2'$ and $P_3'$ joining $\{x_3, x_4,w\}\subseteq V(C[x_3,x_4])$ and three distinct vertices $\{u_2,u_3, w'\}\subseteq V(H_1)$. If $C[x_3,x_4]$ is an edge $x_3x_4$, replace $x_3$ by a clique $K$ with $d_G(x_3)\ge 3$ vertices such that the edge of $G$ incident with $x_3$ is incident with exactly one vertex of $K$. Let $G'$ be the new graph which is 3-connected. Then let $C'[x_3,x_4]$ denote a path containing $x_4$ and all vertices of the clique $K$ such that $x_4$ is an endvertex. Then $C'[x_3,x_4]$ has at least four vertices, and $C'[x_3,x_4]$ and $H_1$ are 3-linked in $G'$. Apply Theorem~\ref{thm:strongPT} to $C'[x_3,x_4]$ and $H_1$. There are three internally disjoint paths joining $x_3$ and two vertices of $K$ to three distinct vertices $\{u_2, u_3,w'\}\subset V(H_1)$. The two disjoint paths joining two vertices of $K$ and two vertices from $\{u_2,u_3,w'\}$ in $G'$ correspond to two internally disjoint paths of $G$ joining $x_3$ and two distinct vertices from $\{u_2,u_3, w'\}$. Therefore,  
without loss of generality, assume that $w, x_3$ and $x_4$ are endvertices of $P_1', P_2'$ and $P_3'$ (note that $w$ maybe equal to $x_3$), respectively. 

By symmetry of the two vertices $x_3$ and $x_4$, and the symmetry of $H_1$, the only two possibilities for the other endvertices of $P_1', P_2'$ and $P_3'$ are: 
\begin{center}
 (1) $w'\in P_1', u_2\in P_2'$ and $u_3\in P_3'$; or (2) $u_2\in P_1', w'\in P_2'$ and $u_3\in P_3'$.
\end{center}  

First, assume (1) holds, that $w'\in P_1', u_2\in P_2'$ and $u_3\in P_3'$. By Jumper Operations, $w'\notin \{u_1, x_5\}$. If $w'\in C(x_1, x_2)$, then $G$ has a cycle $C[x_1,w']\cup P_1'\cup C^{-1} [w, x_3]\cup P_2' \cup C^{-1}[u_2, u_1]\cup P_1\cup P_3\cup C[u_3, x_1]$ (if $w'\in C(x_1,u_1)$) or $C[x_1, u_1]\cup P_1\cup P_2\cup C^{-1}[u_2, w']\cup P_1'\cup C[w, x_4]\cup P_3'\cup C[u_3, x_1]$ (if $w'\in C(u_1, x_2)$), either of which violates Claim~1.  If $w'\in \cup_{i=1}^3 P_i$, then consider $x_5$ and the cycle $C[u_3,u_2]\cup P_2'\cup C[x_3,x_4]\cup P_3'$, which violates Claim~2 because $G$ has three disjoint paths joining $x_5$ and the cycle such that one of the path ends at $w\in C[x_3,x_4]$. 

So $w'\in C(x_2, u_2)$ or $w'\in C(u_3,x_1)$. By symmetry, we may assume that $w'\in C(x_2, u_2)$.  
Then applying Perfect's Theorem to $x_4$ and $H_2=H_1\cup C[x_3,w]\cup P_2'\cup P_1'$, there are three internally disjoint paths joining $x_4$ and three distinct vertices of $H_2$ such that two of the paths, say $P_1''$ and $P_2''$, end at $w$ and $u_3$, and the third path $P_3''$ ends at some vertex $w''$ of $H_2$. By Jumper Operations, $w''\notin \{w', u_1, u_2, x_5\}$. An argument  similar to that used in  the proof of $w'\notin C(x_1,x_2)$ shows that $w''\notin C(x_1, x_2)$. By Claim~2, $w''\notin \cup_{i=1}^3 P_i$ and $w''\notin P_2'$ (where we consider $x_5$ and the cycle $C[x_1,w']\cup P_1'\cup C^{-1}[w,x_3]\cup P'_2[x_3,w'']\cup P_3''\cup P_2''\cup C[u_3, x_1]$ for the last case). Clearly, $w''\notin C(u_3, x_1)$ or $C[x_3,w]$ or $P_1'$. Otherwise, $x_4$ could be inserted into the cycle $C[u_3, w']\cup P_1'\cup C^{-1}[w,x_3]\cup P_2'\cup P_2\cup P_3$, which yields a contradiction. So $w''\in C(x_2, w')$ or $w''\in C(w', u_2)$.  
If $w''\in C(x_2,w')$, then $C'=C[x_1,w'']\cup P''_3\cup P_1''\cup C^{-1}[w,x_3]\cup P_2'\cup P_2\cup P_3\cup C[u_3,x_1]$ is a cycle of the type desired, yielding a contradiction.
If $w''\in C(w', u_2)$, then $C'=C[x_1,w'']\cup P_3''\cup P_1''\cup C^{-1}[w,x_3]\cup P_2'\cup P_2\cup P_3\cup C[u_3,x_1]$ also gives a contradiction. This contradiction implies that (1) does not happen.

So, in the following, assume that $u_2\in P_1', w'\in P_2'$ and $u_3\in P_3'$. By Jumper Operations, $w'\notin \{u_1, x_5\}$. If $w'\in C(x_1, u_1)$, then $G$ contains the cycle $C[u_3, w']\cup P_2'\cup C[x_3, w]\cup P_1'\cup C^{-1}[u_2, u_1]\cup P_1\cup P_3$, violating Claim~1. If $w'\in C(u_1, x_2)$, then $G$ contains the cycle $C[x_1, u_1]\cup P_1\cup P_2\cup C^{-1}[u_2, w']\cup P_2'\cup C[x_3, x_4]\cup P_3'\cup C[u_3, x_1]$ of the type desired, yielding a contradiction. By Claim~2, $w'\notin \cup_{i=1}^3 P_i$. If $w'\in C(u_3, x_1)$, then $C'=C[x_1, u_2]\cup P_2\cup P_3\cup P_3'\cup C^{-1}[x_4,x_3]\cup P_1'\cup C[w', x_1]$ is a cycle of the type desired, a contradiction.  So $w'\in C(x_2, u_2)$. Then consider the vertex $x_4$ and the cycle $C''=C[u_3, w']\cup P_2'\cup C[x_3, w]\cup P_1'\cup P_2\cup P_3$. By Perfect's Theorem, there are three internally disjoint paths avoiding $z$ which join $x_4$ and three distinct vertices of the cycle $C''$ such that one of the paths ends at $w\in C''[x_3, x_5]$, but this contradicts Claim~2 (by interchanging the labels of $x_4$ and $x_5$). This completes the proof of the theorem.
\end{proof}

We are now equipped to complete the proof of our main result on cyclability in 3-connected claw-free graphs --- Theorem~\ref{thm:main}. \medskip



\noindent{\bf Proof of Theorem~\ref{thm:main}.}
Let $G$ be a minimum counterexample. Hence, $G$ has a vertex $z$ and  $S=\{x_1, x_2,...,x_5\}$ such that $G$ has no cycle containing all vertices of $S$, but avoiding $z$. By Lemma~\ref{C51-z-in-2cut}, the vertex $z$ belongs to exactly one 3-cut $T$ of $G$.
By Lemma~\ref{lem:z-3-cut}, $T$ separates a single vertex from $S$, say $x_1$.
By Theorem~\ref{thm:c41}, $G$ has a cycle $C$ containing $x_1$ and three other vertices from $S$, but avoiding $z$.
Without loss of generality, assume that $C$ contains $x_1, x_2,x_3$ and $x_4$ in clockwise order. 

By Lemma~\ref{lem:avoid-z-path}, $x_5$ and $C$ are 3-linked in $G-z$ since $z$ is contained in exactly one 3-cut $T$. Hence, there are three internally disjoint paths $P_1, P_2$ and $P_3$ joining $x_5$ and three distinct vertices of $C$. 
Let $u_i$ be the endvertex of $P_i$ on $C$ for $i\in \{1,2,3\}$.
Note that none of the segments of $C$ determined by $x_1,x_2,x_3$ and $x_4$ contains two endvertices of the three paths from $x_5$ to $C$. Otherwise, $x_5$ can be inserted into $C$ to give a cycle of the type we seek, contradicting the assumption that $G$ is a counterexample. 
\medskip
 
\noindent{\bf Claim.}  {\sl The segment $C(x_1, x_2)$ contains one vertex of $\{u_1,u_2, u_3\}$.}  
 \medskip

\noindent {\em Proof of the Claim.}  Suppose not.
Without loss of generality, assume that $u_1\in C[x_2, x_3], u_2\in C[x_3, x_4]$ and $u_3\in C[x_4,x_1]$. Let $H=C[u_2,u_1]\cup(\bigcup P_i)$. By Lemma~\ref{lem:avoid-z-path}, $x_3$ and $H$ are 3-linked in $G-z$. 
Now apply Perfect's Theorem to $x_3$ and $H$ to obtain
three internally disjoint paths joining $x_3$ and three distinct vertices of $H$, two of which, say $P_1'$ and $P_2'$, end at $u_1$ and $u_2$ respectively.
Assume that the third path $P_3'$ from $x_3$ to $H$ ends at $w$. By Jumper Operations, we assume that $w\notin \{u_1,u_2, u_3\}$. 
If $w$ belongs to any of the $P_i-u_i$'s, then there are two internally disjoint paths joining $x_5$ and either $\{u_1,x_3\}$ or $\{u_2,x_3\}$ which can be used to insert $x_5$ into the cycle $C[u_2,u_1]\cup P_1'\cup P_2'$ to yield a cycle of the type desired, a contradiction.  
Similarly, $w\notin C[x_2,u_1]\cup C[u_2, x_4]$, for otherwise $x_3$ can be insert into the cycle $C[u_2,u_1]\cup P_1\cup P_2$ by using two paths from among the three $P_i'$'s to yield a cycle of the type desired, a contradiction. 
If $w\in C(u_3,x_1)$,   the cycle
$C'=C[x_1, u_1]\cup P_1\cup P_3\cup C^{-1}[u_3, u_2]\cup P_2'\cup P_3'\cup C[w, x_1]$ is of the type desired, a contradiction. If $w\in C[x_4,u_3)$, 
the cycle
$C'=C[x_1,u_1]\cup P_1'\cup P_3'\cup C^{-1}[w, u_2]\cup P_2\cup P_3\cup C[u_3,x_1]$  is of the type desired and yet again we have a contradiction. 
Therefore, $w\in C(x_1, x_2)$.

Let $H'=C[u_2,w] \cup (\bigcup P_i)\cup (\bigcup P_i')$. By Lemma~\ref{lem:avoid-z-path}, $x_2$ and $H'$ are 3-linked in $G-z$.  Then apply Perfect's Theorem to obtain
three internally disjoint paths joining $x_2$ and three distinct vertices of $H'$.
Two of these paths, say $P_1''$ and $P_2''$, end at $w$ and $u_1$ respectively, and the third path $P''_3$ ends at some $w'$. Again, by Jumper Operations, assume that $w'\notin\{u_1,u_2,u_3, w\}$. 
A straightforward check confirms that $w'\notin (\bigcup P_i)\cup (\bigcup P_i')\cup C(x_1,w)$,
for otherwise, a cycle of the type sought can be easily constructed.
If $w'\in C(u_2, x_4]$, then $C'=C[x_1,w]\cup P_3'\cup P_2'\cup P_2\cup P_1\cup C^{-1}[u_1,x_2]\cup P_3'' \cup C[w',x_1]$ is a cycle
yielding a contradiction.
If $w\in C(x_4,u_3)$, then
$C'=C[x_1, w]\cup P_3'\cup P_1'\cup C^{-1}[u_1,x_2]\cup P_3'' \cup C^{-1}[w',u_2]\cup P_2\cup P_3 \cup C[u_3,x_1]$, again yields a
contradiction.
So $w\in C(u_3,x_1)$.
But then $C'=C[x_1,w]\cup P_3'\cup P_2'\cup C[u_2, u_3]\cup P_3\cup P_1\cup C^{-1}[u_1,x_2]\cup P_3'' \cup C[w',x_1]$ is a cycle
which yields a contradiction yet again.
This completes the proof of the Claim. \medskip

By the Claim and symmetry,  both $ C(x_1,x_2)$ and $ C(x_4,x_1)$ contain one vertex from $\{u_1,u_2,u_3\}$, which implies that one of $C(x_2,x_3)$ and $C(x_3,x_4)$ does not contain a vertex from $\{u_1,u_2,u_3\}$. By symmetry again, we may assume that it is $C(x_2,x_3)$ which does not contain a vertex from $\{u_1, u_2, u_3\}$ and assume that $u_1\in C(x_1, x_2)$, $u_2\in C(x_3, x_4)$ and $u_3\in C(x_4, x_1)$.
Let  $H=C[u_3,u_2]\cup (\bigcup P_i)$. By Lemma~\ref{lem:avoid-z-path}, $x_4$ and $H$ are 3-linked in $G-z$.
Now apply  Perfect's Theorem to $x_4$ and $H$ 
to obtain three internally disjoint paths from $x_4$ to $H$.
Two of the paths, say $P_1', P_2'$, end at $u_2, u_3$ respectively and the third path $P_3'$ ends  at some vertex $w\notin \{u_1,u_2,u_3\}$ by Jumper Operations. 
First note that $w\notin P_i$ for any $i\in \{1,2,3\}$, for otherwise, $x_5$ can be inserted into the cycle $C'=C[u_3,u_2]\cup (P_1'\cup P_2')$ to generate a cycle which again
yields a contradiction.
Similarly, $w\notin C[x_3, u_2]\cup C[u_3,x_1)$ for otherwise, $x_4$ can be inserted into the cycle $C'=C[u_3, u_1]\cup P_2\cup P_3$ to
form a cycle which yields a contradiction.
If $w\in C(x_1,u_1)$, then cycle $C''=C[x_1,w]\cup P'_3\cup P_2'\cup C^{-1}[u_2,u_1]\cup P_1\cup P_3\cup C[u_3,x_1]$ yields a contradiction.
If $w\in C(u_1,x_2]$, then cycle $C''=C[x_1,u_1]\cup P_1\cup P_2\cup C^{-1}[u_2,w]\cup P_3'\cup P_2'\cup C[u_3,x_1]$ gives a contradiction.
So $w\in C(x_2, x_3)$.
Now consider the cycle $C'=C[u_3,u_2]\cup P_2\cup P_3$ and the vertex $x_4$, which contradicts Claim by interchanging the labels of $x_4$ and $x_5$. This final contradiction completes the proof of Theorem~\ref{thm:main}. \qed

\medskip

Now, we show that Theorem~\ref{thm:main} is sharp by providing infinitely many examples of 3-connected claw-free graphs which are
{\it not} $C(6,1)$.\par

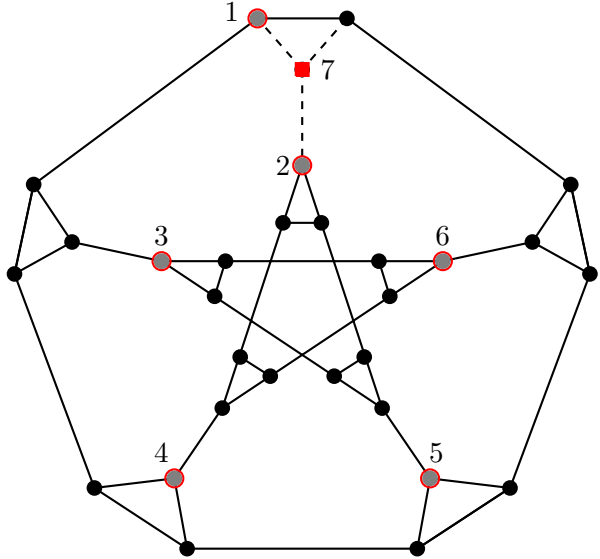
\begin{figure}[!htb] 
\begin{center}
\begin{tikzpicture}[thick, scale=.85]

\draw[] (1.25,1.7)--(4.7, 4.0)--(.3,4.0)--(3.75,1.7)--(2.5,5.5)--(1.25,1.7);
\draw [] (1.53,2.5) -- (2.0,2.2);
\draw [] (3.47,2.5)--(3.0,2.2);
\draw [] (1.3, 4.0) -- (1.13, 3.45);
\draw [] (3.7, 4.0)--(3.87, 3.45);
\draw []  (2.2, 4.6)--(2.8, 4.6);
\draw [] (.5,.6)--(-.75,.45)--(.7,-.5)--(.5,.6);
\draw [] (4.5,.6)--(5.75,.45)--(4.3,-.5)--(4.5,.6);
\draw [] (-2, 3.8) --(-1.7, 5.2) -- (-1.1, 4.3)--(-2,3.8);
\draw [] (7, 3.8)--(6.7, 5.2) -- (6.1, 4.3) --(7,3.8);
\draw [dashed] (2.5, 7.0) -- (1.8,7.8);
\draw [dashed] (3.2,7.8) --(2.5, 7.0);
\draw[] (.5,.6)--(1.25, 1.7);
\draw[] (4.5, 0.6)--(3.75,1.7);

\draw[dashed] (2.5,7.0)-- (2.5, 5.5);

\draw[] (.7,-0.5)--(4.3,-.5)--(5.75,.45)--(7,3.8)--(6.7, 5.2)--(3.2,7.8)--(1.8,7.8)--(-1.7,5.2)--(-2, 3.8)--(-.75,.45); 

\draw[red] (.5,.6) circle (4pt);
\filldraw [gray] (0.5,0.6) circle (3pt); 
\filldraw [] (-.75, .45) circle (3pt);
\filldraw [] (.7, -.5) circle (3pt);

\draw[red] (4.5,.6) circle (4pt); 
\filldraw [gray] (4.5,0.6) circle (3pt); 
\filldraw [] (5.75, .45) circle (3pt);
\filldraw [] (4.3, -.5) circle (3pt);

\filldraw [] (-2, 3.8) circle (3pt);
\filldraw [] (-1.7, 5.2) circle (3pt);
\filldraw [] (-1.1, 4.3) circle (3pt);
\draw[] (-1.1, 4.3)--(.3,4.0);

\filldraw [] (7, 3.8) circle (3pt);
\filldraw [] (6.7, 5.2) circle (3pt);
\filldraw [] (6.1, 4.3) circle (3pt);
\draw [] (6.1,4.3)--(4.7,4.0);

\draw[red] (1.8,7.8) circle (4pt);
\filldraw [gray] (1.8,7.8) circle(3pt);
\filldraw [] (2.5, 7.0) circle (3pt);
\filldraw [] (3.2,7.8) circle(3pt);

\filldraw [] (1.25,1.7) circle (3pt); 
\filldraw [] (1.53,2.5) circle (3pt); 
\filldraw [] (2.0,2.2) circle (3pt);

\filldraw [] (3.75,1.7) circle (3pt);
\filldraw [] (3.47,2.5) circle (3pt); 
\filldraw [] (3.0,2.2) circle (3pt);

\draw[red] (.3,4.0) circle (4pt);
\filldraw [gray] (0.3, 4.0) circle (3pt);
\filldraw [] (1.3, 4.0) circle (3pt);
\filldraw [] (1.13, 3.45) circle (3pt);

\draw[red] (4.7,4.0) circle (4pt);
\filldraw [gray] (4.7, 4.0) circle (3pt);
\filldraw [] (3.7, 4.0) circle (3pt);
\filldraw [] (3.87, 3.45) circle (3pt);

\draw[red] (2.5,5.5) circle (4pt);
\filldraw [gray] (2.5, 5.5) circle (3pt);
\filldraw [] (2.2, 4.6) circle (3pt);
\filldraw [] (2.8, 4.6) circle (3pt);

\filldraw[red] (2.4,6.9)--(2.6,6.9)--(2.6,7.1)--(2.4,7.1)--(2.4,6.9);   

\node [] at (1.4,7.9) { $1$};
\node [] at (2.9, 7) {$7$};
\node [] at (2.2,5.5) {$2$};
\node [] at (.3, 4.4) {$3$};
\node [] at (.3, 1) {4};
\node [] at (4.6, 1) {5};
\node [] at (4.7, 4.4) {6};
 
\end{tikzpicture} 
\caption{\small  A cubic claw-free graph without a cycle through vertices $1,2,...,6$, which avoids  7.}\label{fig:example}
\end{center}
\end{figure}

Let $G$ be the 3-connected claw-free graph on thirty vertices obtained by replacing each of the ten vertices of the Petersen graph
with a triangle.  (See Figure~\ref{fig:example}.) It is easy to check that there is no cycle in this graph containing the six vertices numbered 1 through 6 which fails to contain the seventh vertex labeled 7.
Hence this graph does not possess the property $C(6,1)$.
To obtain infinitely many such counterexamples, one may simply replace any of the triangles with a larger complete graph.

It is also interesting to observe that the graph shown in Figure~\ref{fig:example} is cubic as well.
Hence 3-connected claw-free cubic graphs do not necessarily possess the property $C(6,1)$ either. 
As we mentioned in the Introduction, a 3-connected cubic graph may not be $C(3,1)$, an example being $K_{3,3}$.

\section{Graphs on Surfaces}

Tutte \cite{T} proved that every 4-connected plane graph $G$ is Hamiltonian, and hence is $C(n,0)$ where $n=|V(G)|$. However, for 3-connected plane graphs, the maximum cyclability is only 5 and this bound is sharp in the sense that there exist  3-connected plane graphs which are not $C(6,0)$ \cite{PW, S}. The same fact holds true  for 3-connected plane triangulations. There are infinitely many  3-connected plane triangulations which are not $C(6,0)$ and which are not $C(4,1)$. For example, in the eleven-vertex graph shown in Figure~\ref{fig:triangulation}, there is no cycle through vertices 1,2,\ldots, 6, and neither is there a cycle through vertices 1, 2, 3 and 4, which avoids 7. Note that the example in Figure~\ref{fig:triangulation} can be extended to infinitely many examples by repeatedly adding a new vertex in the exterior face and connecting it to the three vertices of the exterior triangle.

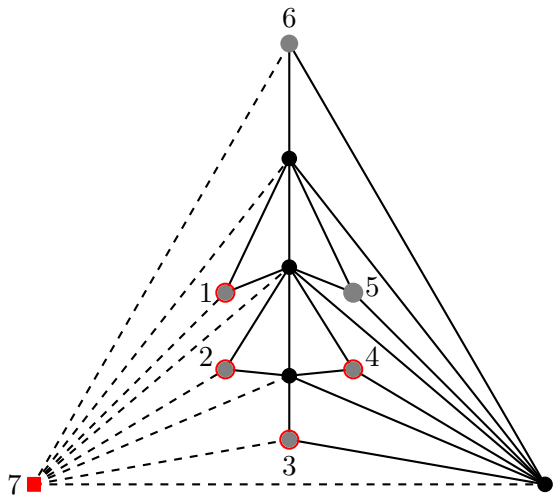
\begin{figure}[!htb] 
\begin{center}
\begin{tikzpicture}[thick, scale=.85]

  \draw[dashed] (0,0)--(8,0);
\draw[dashed](4, 6.9)--(0,0);
\draw[] (8,0)--(4, 6.9);
   \draw[] (4,.7)--(4,6.9); 
    \draw[dashed] (0,0)--(4,1.7);
\draw[dashed] (0,0) --(4, 3.4);
\draw[dashed] (0,0)-- (4, 5.1);
\draw[dashed] (0,0) --(3, 1.8);
\draw[dashed] (0,0)-- (3, 3);
\draw[] (3,1.8) --(4, 3.4)--(5, 1.8)--(8,0);
\draw[] (3,1.8)-- (4, 1.7)--(5, 1.8);
\draw[] (3,3) --(4, 3.4)--(5, 3)--(8,0);
\draw[] (3,3)-- (4, 5.1)--(5, 3);
 \draw[dashed] (0,0)--(4, .7);
\draw[] (4, .7)--(8,0);
 \draw[] (8,0)--(4,1.7);
\draw[] (8,0) --(4, 3.4);
\draw[] (8,0)-- (4, 5.1);

\filldraw [] (8,0) circle (3pt);
\filldraw [gray] (4, 6.9) circle (3.5pt);
\filldraw [] (4, 1.7) circle (3pt);
\filldraw [] (4, 3.4) circle (3pt);
\filldraw [] (4, 5.1) circle (3pt);

\draw[red] (4,.7) circle (4pt);
\filldraw[gray] (4, .7) circle (3pt);

\draw[red] (3,1.8) circle (4pt);
\filldraw[gray] (3, 1.8) circle (3pt);

\draw[red] (3,3) circle (4pt);
\filldraw[gray] (3, 3) circle (3pt);

\draw[red] (5,1.8) circle (4pt);
\filldraw[gray] (5, 1.8) circle (3pt);
\filldraw[gray] (5, 3) circle (4pt);

\filldraw[red] (-.1,.1)--(.1,.1)--(.1,-.1)--(-.1,-.1);   

\node [] at (2.7,3) { $1$};
\node [] at (2.7, 2) {$2$};
\node [] at (4, .3) {$3$};
\node [] at (5.3, 2) {$4$};
\node [] at (5.3, 3.1) {$5$};
\node [] at (4,7.3) {$6$};
\node [] at (-.3, 0) {$7$}; 
 
\end{tikzpicture} 
\caption{\small A plane triangulation without a cycle through vertices $1,2,3,4$, which avoids $7$.}\label{fig:triangulation}
\end{center}
\end{figure}

In fact, for {\em any} given closed surface $\Sigma$, there are infinitely many graphs which are neither $C(6,0)$ nor $C(4,1)$, even for surface triangulations which have the maximal edge density among the graphs embedded in the surface. To construct a surface triangulation which is neither $C(6,0)$ nor $C(4,1)$: 
take any surface triangulation of $\Sigma$, then glue the exterior triangle of the graph in Figure~\ref{fig:triangulation} to any face-bounding triangle of the triangulation to generate a new triangulation. Then the new triangulation has no cycle through the six gray vertices, nor does it have a cycle through 1,2,3 and 4, which avoids 5. Note that, a surface triangulation is always a polyhedral embedding. Hence, there are infinitely many graphs polyhedrally embedded in any closed surface which do not have the properties $C(6,0)$ or $C(4,1)$. 

On the other hand, Theorem~\ref{thm:poly} shows that a graph polyhedrally embedded in a closed surface must have the property $C(3,1)$. Note that, the assumption that the embedding be polyhedral is necessary because $K_{3,3}$ has a closed 2-cell embedding in the projective plane but does not have the property $C(3,1)$. A graph $G$ admitting a polyhedral embedding is 3-connected and the neighbors of any vertex $v\in V(G)$ belong to a cycle of $G-v$ (i.e., the symmetric difference of the face boundaries containing $v$) (see \cite{MT}).   

Theorem~\ref{thm:poly} follows directly from the following more general result.

\begin{thm}\label{thm:general-C(3,1)}
Let $G$ be a 3-connected graph and $z\in V(G)$ such that $G-z$ has a cycle $C_z$ containing all neighbors of $z$. Then, for any other three vertices $x_1, x_2$ and $x_3$, $G$ has a cycle passing through $x_1, x_2, x_3$ but avoiding $z$. 
\end{thm}
\begin{proof}
 Suppose to the contrary that $G$ is a counterexample. Then $G$ has vertices   $x_1, x_2, x_3$ such that $G$ does not have a cycle through $x_1, x_2$ and $x_3$, which avoids $z$.  
Since $G$ is 3-connected, for any pair of vertices $x_i$ and $x_j$ from $\{x_1,x_2,x_3\}$, there is a cycle $C_{ij}$ containing $x_i$ and $x_j$. Because $G$ is a counterexample, $C_{ij}$ does not contain the third vertex $x_k$. On the other hand, $G$ has three internally disjoint paths $P_1, P_2$ and $P_3$ from $x_k$ to the cycle $C_{ij}$ ending at three different vertices of $C_{ij}$ by Menger's Theorem. Again, since $G$ is a counterexample, one of these three paths must contain the vertex $z$. Otherwise, two of $P_1, P_2$ and $P_3$ must end on the same segment of $C_{ij}$ separated by $x_i$ and $x_j$. These two paths then could be used to insert $x_k$ into the cycle $C_{ij}$ to generate a cycle of the type we seek, a contradiction. Note that, if at most two of the three paths $P_1, P_2$ and $P_3$ intersect $C_z$, then $G$ has three paths from $x_k$ to $C_{ij}$ by using the segments of $C_z$ as a detour to avoid the vertex $z$, a contradiction again. 
Hence, we prove the following claim. \medskip

\noindent{\bf Claim~1.} {\em Either $x_k$ belongs to $C_z$ or the three paths $P_1,P_2$ and $P_3$ intersect $C_z$ at three different vertices.} \medskip

Again since $G$ is a counterexample, $C_z$ contains at most two vertices from $\{x_1,x_2,x_3\}$. If $|C_z\cap \{x_1,x_2,x_3\}|=2$, then $G$ has three internally disjoint paths from the vertex $x_i$ not on $C_z$ to $z$ which intersects $C_z$ at three different vertices. Further, the vertex $x_i$ can be inserted into $C_z$ using two of the three paths to generate a cycle of the type desired, a contradiction. Hence  $C_z$ contains at most one vertex from $\{x_1,x_2,x_3\}$. Without loss of generality, assume that $x_1, x_2\notin C_z$.
Since $G$ is 3-connected, there are three internally disjoint paths joining each of  $x_1$ and $x_2$  to $z$. Assume that the three paths from $x_1$ to $z$ intersect $C_z$ at $v_1, v_2$ and $v_3$, and  denote the segment from $x_1$ to $v_i$ by $P[x_1,v_i]$ for $i\in \{1,2,3\}$. Similarly, there are three internally disjoint paths from $x_2$ to $C_z$ ending at three different vertices $u_1, u_2$ and $u_3$. Denote these paths by $P[x_2, u_i]$ for $i\in\{1,2,3\}$. Let $\Gamma_1=\{v_1,v_2,v_3\}$ and $\Gamma_2=\{u_1,u_2,u_3\}$.

Note that either $x_3\in C_z$ or $x_3\notin C_z$. First, we consider the case that $x_3\in C_z$. Any two paths of the type $P[x_1, v_i]$, say $P[x_1,v_1]$ and $P[x_1,v_2]$,  together with the segment of $C_z$ separated by  $v_1$ and $v_2$ which contains $x_3$ form a cycle. Let us denote such a cycle  by $C_{13}$. Then by Claim~1, $P[x_2, u_i]$ does not internally intersect $P[x_1,v_j]$ ($u_i$ and $v_j$ may be the same), for $i,j \in \{1,2,3\}$. Otherwise, there are three internally disjoint paths from $x_2$ to $C_{13}$ which do not intersect $C_z$ at three different vertices, a contradiction to Claim~1. Since $x_3\in C_z$, 
without loss of generality, assume that $v_1,v_2$ and $v_3$ appear in clockwise order on $C_z$ and $x_3\in C_z[v_1, v_2]$. By symmetry, assume that $|C_z(v_3,v_1)\cap \Gamma_2|\le |C_z(v_2, v_3]\cap \Gamma_2|$. Then $|C_z(v_3,v_1)\cap \Gamma_2|\le 1$ because $|\Gamma_2|=3$. If $|C_z(v_3,v_1)\cap \Gamma_2|=1$, say $u_3\in C_z(v_3,v_1)$, then $C_z(v_2,v_3]$ has a vertex from $\Gamma_2$, say $u_2$. It follows that $G$ has a cycle $C=P[x_1,v_1]\cup C[v_1,u_2]\cup P[x_2,u_2]\cup P[x_2,u_3]\cup C_z^{-1}[u_3, v_3]\cup P[x_1,v_3]$ which contains $x_1, x_2$ and $x_3$, but not $z$, a contradiction. Hence, $C_z(v_3,v_1)\cap \Gamma_2=\emptyset$. So $\Gamma_2\subset C_z-C_z(v_3,v_1)=C_z[v_1,v_3]$. Then one of segments $C_z[v_1,x_3)$ and $C_z[x_3,v_3]$ contains two vertices from $\Gamma_2$, say $u_1$ and $u_2$. But then $x_2$ can be inserted into the cycle $P[x_1,v_1]\cup C_z[v_1,v_3]\cup P[x_1,v_3]$ using the two paths $P[x_2, u_1]$ and $P[x_2,u_2]$ to yield a cycle of the type we seek, a contradiction again. This contradiction implies that $x_3\notin C_z$. 

Let $H=C_z\cup (\bigcup P[x_1, v_i])\cup (\bigcup P[x_2, u_j])$. It is easily seen that every path of the type $P[x_1,v_i]$ or $P[x_2,u_j]$ is contained in a cycle of $H$ which contains both $x_1$ and $x_2$, but not $z$. Since $G$ is a counterexample and $x_3\notin C_z$, it follows that $x_3\notin H$. Since $G$ is 3-connected, there are three internally disjoint paths from $x_3$ to $H$ ending at three different vertices $w_1, w_2$ and $w_3$ by Menger's Theorem. Let $\Gamma_3=\{w_1, w_2, w_3\}$.  By Claim~1, $\Gamma_3\subset C_z$. 
 
\medskip

\noindent{\bf Claim~2.} {\em For every two vertices $v,v'\in \Gamma_i$ with $i\in \{1,2,3\}$, both $C_z(v,v')$ and $C_z(v',v)$ contain a vertex $v''\in \Gamma_j$ for some $j\in \{1,2,3\}\backslash \{i\}$.} \medskip

\noindent{\em Proof of Claim~2.} Suppose that Claim~2 does not hold. Without loss of generality, assume that $C(v,v')\cap \Gamma_j=\emptyset$ for every $j\in \{1,2,3\}\backslash \{i\}$. Let $C_z'=C_z-C_z(v,v')\cup P[x_i,v]\cup P[x_i,v']$. Then $x_i\in C_z'$ and $\Gamma_j\subseteq C_z'$ for every $j\in \{1,2,3\}\backslash \{i\}$. Treat $C_z'$ as $C_z$ and $x_i$ as $x_3$ in the case that $x_3\in C_z$. An argument similar to the one used in the proof for the case $x_3\in C_z$ shows that $G$ must have a cycle containing $x_1, x_2$ and $x_3$, but not $z$, which contradicts the assumption that $G$ is a counterexample. This completes the proof of Claim~2. \medskip

Note that either $C_z(v_1,v_2]$ or $C_z(v_2,v_1]$ contains two vertices from $\Gamma_2$. Without loss of generality, assume that $\Gamma_2$ has two vertices in $C_z(v_2,v_1]$, namely $u_{1}$ and $u_2$ (relabeling if necessary). Assume that $v_1, v_2, u_1$ and $u_2$ appear in clockwise order on $C_z$. Consider the four vertices $v_1,v_2,u_1$ and $u_2$ on $C_z$, and observe that the segment $C_z(v_1,u_1]$ is symmetric to the segment $C_z(u_1,v_1]$. Then either $C_z(v_1,u_1]$ or $C_z(u_1, v_1]$ contains two vertices from $\Gamma_3$. By the symmetry of $C(v_1,u_1]$ and $C(u_1,v_1]$, assume that $|\Gamma_3\cap C_z(v_1,u_1]|\ge 2$, and $w_1, w_2\in C_z(v_1, u_1]$ such that $v_1, w_1$ and $w_2$ appear in clockwise order on $C_z$.

If $w_2\in C_z(v_2, u_1]$, then $x_3$ can be inserted into the cycle  $C_z[u_2,v_1]\cup P(x_1,v_1)\cup P(x_1,v_2)\cup C_z[v_2,u_1]\cup P(x_2,u_1)\cup P(x_2,u_2)$ using the two paths $P(x_3,w_1)$ and  $P(x_3, w_2)$ if $w_1\in C_z[v_2,u_1]$ (or $P(x_3,w_1)\cup C_z[w_1,v_2]$ if $w_1\in C_z(v_1,v_2)$) to give a cycle of the type we seek, and again we have a contradiction to the assumption that $G$ is  a counterexample. This contradiction implies that both $w_1$ and $w_2\in C_z(v_1,v_2]$. 

 By Claim~2, $C_z(u_1, u_2)$ contains a vertex from $\Gamma_1\cup \Gamma_3$. If $w_3\in C_z(u_1,u_2)$, then $x_3$ can be inserted into the cycle  $C'=C_z[u_2,v_1]\cup P(x_1,v_1)\cup P(x_1,v_2)\cup C_z[v_2,u_1]\cup P(x_2,u_1)\cup P(x_2,u_2)$ using the two paths   $P(x_3,w_2)\cup C_z[w_2,v_2]$  and  $P(x_3, w_3)\cup C_z^{-1}[w_3,u_1]$, a contradiction. Hence, $v_3\in C_z(u_1,u_2)$. But then the cycle $P[x_1,v_1]\cup C_z[v_1,w_1]\cup P[x_3,w_1]\cup P[x_3,w_2]\cup C[w_2,u_1]\cup P[x_2,u_1]\cup P[x_2,u_2]\cup C^{-1}_z[u_2, v_3]\cup P[x_1,v_3]$ is a cycle of the type desired, a contradiction. This final contradiction completes the proof. 
\end{proof}

\section{Concluding Remarks}

It is worth mentioning that Kelmans and Lomonosov \cite{KL} characterized $k$-connected graphs without the property $C(k+2,0)$ for integer $k\ge 2$, and Watkins and Mesner \cite{WM} characterized 2-connected graphs without the property $C(3,0)$. It might be possible to use these characterizations to give alternative proofs of Theorem 4.2 (using Kelmans and Lomonosov's characterization for $k=2$) and Theorem~\ref{thm:general-C(3,1)} (using Watkins and Mesner's characterization).  However, these characterizations are not simple, and the applications of them are not straightforward.

Let $f_{\mathcal F}( k, t)$ be the largest integer $m$ such that every $k$-connected graph in the family $\mathcal F$ is $C(m,t)$ where $t, k$ are integers with $k\ge t+2$. If every $k$-connected graph in $\mathcal F$ is Hamiltonian, define $f_{\mathcal F}(k,0)=\infty$. Since a $k$-connected graph has a cycle through any given $k$ vertices (cf. \cite{D}), it follows immediately that $f_{\mathcal F}(k,t)\ge k-t$.  But if $\mathcal F$ contains $K_{k,k}$, then  $f_{\mathcal F}(k,t)= k-t$. For some interesting families of graphs, $f_{\mathcal F}(k,t)$ could be bigger than $k-t$. For example, if we let $\mathcal F$ be the family of claw-free graphs, then $f_{\mathcal F}(3,1)=5$ (Theorem 1.3); if we let $\mathcal F$ be the family of polyhedral maps, then $f_{\mathcal F}(3, 1)=3$ (Theorem~\ref{thm:poly}). Particularly, if let $\mathcal F$ be the family of plane graphs, then $f_{\mathcal F}(3, 0)=5$ (\cite{PW,S}), and $f_{\mathcal F}( 4, 0)=f_{\mathcal F}(4,1)=f_{\mathcal F}(4,2)=\infty$ (\cite{T, DN, TY}) which implies $f_{\mathcal F}(5,t)=\infty$ for any $t\in \{0,1,2,3\}$. Note that, the connectivity of plane graphs is at most 5. Therefore, the exact value of $f_{\mathcal F}(k,t)$ for plane graphs has been determined.  It would be interesting to study $f_{\mathcal F}(k,t)$ for other families of graphs as well.

The cyclability for  graphs embedded in surfaces also deserves to be further explored.  It would be interesting to determine $f_{\mathcal F} (k,t)$ when $\mathcal F$ is the family of graphs embedded in a closed surface $\Sigma$, or the family of triangulations of a closed surface $\Sigma$.  
Note that, with only finitely many exceptions, a graph embedded in a surface has average degree less than 7. Therefore,  if $\mathcal F$ is an infinite family of $k$-connected graphs embedded in a surface, then $k$ could only be a small integer between 2 and 6. 

\bigskip

\noindent{\bf Acknowledgment.} The authors would like to thank the referee for valuable comments.


\end{document}